\tikzset{
    labl/.style={anchor=south, rotate=90, inner sep=.5mm}
}
\numberwithin{equation}{section}
\newtheorem{theorem}{Theorem}[section]
\newtheorem{proposition}[theorem]{Proposition}
\newtheorem{corollary}[theorem]{Corollary}
\newtheorem{lemma}[theorem]{Lemma}
\newtheorem*{remark*}{Remark}
\theoremstyle{definition}
\newtheorem{definition}[theorem]{Definition}
\newtheorem{remark}[theorem]{Remark}
\newcommand{\Q}{\mathbb{Q}}
\newcommand{\R}{\mathbb{R}}
\newcommand{\Z}{\mathbb{Z}}
\newcommand{\Br}{\mathrm{Br\ }}
\newcommand{\Pic}{\mathrm{Pic}\ }
\newcommand{\mfo}{\mathfrak{o}}
\newcommand{\diag}{\mathrm{diag}}
\def\ops@declare#1{\expandafter\DeclareMathOperator\csname #1\endcsname{#1}}
\def\ops@scan#1,{\ifx#1\relax\let\ops@next\relax\else\ops@declare{#1}\let\ops@next\ops@scan\fi\ops@next}
\newcommand{\DeclareMathOperators}[1]{\ops@scan#1,\relax,}
\def\makebb#1{\expandafter\def\csname b#1\endcsname{{\mathbb{#1}}}\ignorespaces}
\def\makebf#1{\expandafter\def\csname bf#1\endcsname{{\mathbf{#1}}}\ignorespaces}
\def\makegr#1{\expandafter\def\csname f#1\endcsname{{\mathfrak{#1}}}\ignorespaces}
\def\makescr#1{\expandafter\def\csname s#1\endcsname{{\mathscr{#1}}}\ignorespaces}
\def\makec#1{\expandafter\def\csname c#1\endcsname{{\mathcal{#1}}}\ignorespaces}
\def\makecal#1{\expandafter\def\csname cal#1\endcsname{{\mathcal{#1}}}\ignorespaces}
\def\doLetters#1{#1A #1B #1C #1D #1E #1F #1G #1H #1I #1J #1K #1L #1M
	#1N #1O #1P #1Q #1R #1S #1T #1U #1V #1W #1X #1Y #1Z}
\def\doletters#1{#1a #1b #1c #1d #1e #1f #1g #1h #1j #1k #1l #1m
	#1n #1o #1p #1q #1r #1s #1t #1u #1v #1w #1x #1y #1z}
\def\abs#1{\lvert#1\rvert}
\def\norm#1{\lVert#1\rVert}
\begin{document}

\title[Diophantine Analysis and Arthur's Trace formula]{Diophantine Analysis and Arthur's Trace formula}

\keywords{}

\subjclass[2020]{MSC11D45, 
11F72, 
11G35, 
11S80, 
14F22, 14G12, 
20G30, 20G35} 

\author[Yuchan Lee]{Yuchan Lee}
\thanks{The author was supported by Samsung Science and Technology Foundation under Project Number SSTF-BA2001-04.}

\address{Yuchan Lee \\  Department of Mathematics, POSTECH, 77, Cheongam-ro, Nam-gu, Pohang-si, Gyeongsangbuk-do, 37673, KOREA}

\email{yuchanlee329@gmail.com}

\begin{abstract}



Let $X$ be a $G$-homogeneous space over a number field $k$ such that $X\cong G_\gamma\backslash G$. Here, $G$ is a simply connected semisimple group over $k$ and $\gamma\in G(k)$ whose centralizer $G_\gamma$ is a maximal torus in $G$ which is anisotropic over $k$.
We formulate the asymptotic for the number of integral points on $X$ bounded by a fixed norm $T>0$ as $T\rightarrow \infty$ in terms of $\kappa$-orbital integrals, which play a role in the stabilization of Arthur’s trace formula.
This formula coincides with the contribution of the stable conjugacy class of $\gamma$ over $k$ to the geometric side of the trace formula.

As an application, we obtain an asymptotic formula for the number of $n \times n$ matrices over the ring of integers $\mathcal{O}_k$ whose characteristic polynomial equals a fixed irreducible polynomial $\chi(x)$ of degree $n$. This result generalizes a case studied by Eskin–Mozes–Shah (1996).
\end{abstract}
\maketitle
\tableofcontents

\section{Introduction}
Let $G$ be a connected reductive group over a number field $k$ and let $X$ be a right homogeneous space of $G$ over $k$.
Assume that $X$ has a $k$-point $x_0$ and that its stabilizer $H:=G_{x_0}$ is connected and reductive. 
Then it follows from \cite[Theorem 4.17]{VP} that $X$ is affine. 
We fix a closed immersion \[X\hookrightarrow \mathrm{Spec}(k[t_1,\cdots,t_n]).\]
Via this embedding, any point of $X(A)$ can be regarded as a point in $A^n$ for any $k$-algebra $A$.
For a point $x=(x_1,\cdots,x_n)\in X(k)\subset k^n$, set
\begin{equation}\label{intro:eq:norm}
\norm{x}_{\infty}:=\max_{v\in\infty_k}\norm{x}_v
~~~~~~\text{ where }~~
\norm{x}_v:=\max_{1\leq i \leq n}\{|x_i|_v\},
\end{equation}for the set of infinite places $\infty_k$ of $k$.
 
Let $\mathbf{X}$ be a separated scheme of finite type over $\mathcal{O}_k$ such that $\mathbf{X}\otimes _{\mathcal{O}_k}k=X$. 
We refer to such $\mathbf{X}$ as a model of $X$.
Define a function $f_{\mathbf{X},T}$ on $X(\mathbb{A}_k)$ by
\begin{equation}\label{def:alg_ftn}
f_{\mathbf{X},T}:=\mathbbm{1}_{X(k_\infty,T)\times \prod_{v<\infty_k}\mathbf{X}(\mathcal{O}_{k_v})},
\end{equation}
where $X(k_\infty,T):= \{x\in X(k_\infty)\mid \norm{x}_\infty \leq T\}$ and $k_\infty:=\prod_{v\in\infty_k}k_v$.
Set
\[
N(X,f_{\mathbf{X},T})
:=|\{x\in \mathbf{X}(\mathcal{O}_k)\mid \norm{x}_{\infty}\leq T \}|.
 \]
In this paper, we study the asymptotic behavior of $ N(X;f_{\mathbf{X},T})$ in the case where $X\cong G_\gamma \backslash G$.
Here, $G$ is semisimple and simply connected, and $\gamma$ is an element in $G(k)$ whose stabilizer $G_\gamma$ is a maximal torus in $G$ which is anisotropic over $k$, i.e. has no nontrivial $k$-split subtorus. 

Our main result establishes an asymptotic formula of $N(X;f_{\mathbf{X},T})$ in terms of $\kappa$-orbital integrals, which arise as a key ingredient in the stabilization of Arthur’s trace formula (see Theorem \ref{intro:eq:mainthm}).
This asymptotic formula suggests a relation between the asymptotic behavior of $N(X;f_{\mathbf{X},T})$ and Arthur's trace formula (see (\ref{intro:eq:corr_with_tf})).
Thus, it allows us to apply the fundamental lemma and the endoscopic transfer of functions, two powerful tools in the stabilization of the trace formula, thereby rewriting $N(X;f_{\mathbf{X},T})$ in terms of stable orbital integrals of smaller groups, called "endoscopic groups" (see Corollary \ref{intro:cor:main_cor}). 

\subsection{Backgrounds}
Firstly, we review the historical background on the asymptotic analysis of $N(X;f_{\mathbf{X},T})$ for a $G$-homogeneous space $X$.
In order to better explain our main result, we then provide a brief introduction to Arthur’s trace formula and to the stabilization of the relevant part of its geometric side.

\subsubsection{\textbf{Asymptotic formula of $ N(X;f_{\mathbf{X},T})$}}
In the case that $k=\mathbb{Q}$, the study of the asymptotic behavior of $ N(X;f_{\mathbf{X},T})$ reduces to a classical problem in Diophantine analysis - namely, the distribution of integer solutions to a Diophantine system.
This problem has been extensively studied via the Hardy-Littlewood circle method (see \cite{Sch}, \cite{Vau}) or automorphic methods (see \cite{FMT}, \cite{DRS}).

In \cite{Sch}, for a function $A(T)$ such that $\lim_{T\rightarrow\infty} \frac{ N(X;f_{\mathbf{X},T})}{A(T)}=1$, Schmidt anticipated that it takes the form of an Euler product whose local factors are given by local densities of $\mathbf{X}$. 
This allows the global asymptotic problem, over $\mathbb{Q}$, to be reduced to local computations, over $\mathbb{Q}_p$ or $\mathbb{R}$.
We refer to this expectation as the Hardy-Littlewood expectation. 
Borovoi and Rudnick \cite{BR} reformulated this Hardy-Littlewood expectation, using Tamagawa measure $m^X$ on $X(\mathbb{A}_\mathbb{Q})$ (see Section \ref{sec:tamagawa_measure}).
They mainly studied a $G$-homogeneous space $X\cong H\backslash G$ under the following assumptions:
\begin{enumerate}
    \item $G$ is a simply connected semisimple group over $\mathbb{Q}$ such that $G'(\mathbb{R})$ is not compact for any non-trivial simple factor $G'$ of $G$.
    \item $H$ is connected and reductive without a non-trivial $\mathbb{Q}$-character.
    \item $X$ satisfies the equidistribution property (\ref{eq:equidistribution_property}) over $\mathbb{Q}$.
\end{enumerate}
For $X$ satisfying (1) and (2), they proved that the Hardy-Littlewood expectation holds if $H$ is simply connected.
When $H$ is not simply connected, this expectation fails in general.
Nevertheless, the local-global principle remains valid for such $X$, in the sense that the global asymptotic problem still reduces to local computations (see \cite[Theorem 0.6]{BR}). 



For a general number field $k$, Wei and Xu generalized this local-global principle
in \cite{WX}.
By using strong approximation with Brauer-Manin obstruction developed in \cite{BD} and \cite{CtX}, they analyzed how the asymptotic behavior of $ N(X;f_{\mathbf{X},T})$ differs from the expected Euler product form.
Based on this observation, they formulated the asymptotic behavior of $ N(X;f_{\mathbf{X},T})$ in terms of an integral on $X(\mathbb{A}_k)$. 
Their result is stated in Theorem \ref{thm:wx}.

\subsubsection{\textbf{Arthur's trace formula and its stabilization}}
Arthur's trace formula provides a description of the characters of representations of $G(\mathbb{A}_k)$ which appear in the discrete automorphic spectrum of $G$ in terms of the geometric data, known as the orbital integral - the integral over the $G(\mathbb{A}_k)$-conjugacy class of an element in $G(k)$.
The trace formula is the identity (see (\ref{eq:Arthurs_trace_formula})):
\begin{equation}\label{intro:eq:Arthurs_trace_formula}
J_{spec}(f)=J_{geom}(f),
\end{equation}
between two distributions taking a test function $f\in \mathcal{C}_c^\infty(G(\mathbb{A}_k))$, where $J_{spec}(f)$ is called the spectral side, which consists of the contributions from representations of $G(\mathbb{A}_k)$, while $J_{geom}(f)$ is called the geometric side and is given by a sum indexed over semisimple conjugacy classes of $G(\mathbb{A}_k)$.

The trace formula has played a central role in the Langlands program, by means of comparing the spectral data of different groups or directly computing the dimension of the space of automorphic forms. 
In various applications (see \cite{JacLang}, \cite{LL}, \cite{Lang63}, \cite{CT20}, \cite{Art13}), the trace formula needs to be described more explicitly, or to be invariant under conjugacy, or even invariant under stable conjugacy (see Definition \ref{def:stable_conjugacy}).
Therefore, 
its stabilization - a further refinement toward the stable trace formula (see \cite[Corollary 29.9.(b)]{Art}) - are required.

For a simply connected semisimple group $G$, an element $\gamma\in G(k)$ is said to be elliptic regular if its stabilizer $G_\gamma$ is a maximal torus in $G$ which is anisotropic over $k$.
The contribution from conjugacy classes of elliptic regular elements to the geometric side is a weighted sum of usual orbital integrals (see (\ref{eq:specail_part_of_trace_formula})). 
Its stabilization involves roughly two steps.
The first step, called pre-stabilization, rewrites the geometric side into a sum of $\kappa$-orbital integrals (see Section \ref{sec:pre_stabilization}). 
The second step transfers these modified terms into stable orbital integrals of endoscopic groups (see \ref{sec:endoscopic_transfer}).



\subsection{Main Result}\label{sec:intro:mainresult}
Suppose that $G$ is semisimple and simply connected, and that $X\cong G_\gamma \backslash G$ where $\gamma$ is an elliptic regular element of $G(k)$. 
Then $X$ is identified with the conjugacy class of $\gamma$ in $G$ over $k$, which is known to be a closed subscheme of $G$ over $k$ (see Remark \ref{rmk:conjugacy_and_pointsofX}).
Accordingly, $X(\mathbb{A}_k)$ corresponds to the stable conjugacy class of $\gamma$ in $G(\mathbb{A}_k)$. 
Let $\tilde{f}_{\mathbf{X}, T}$ be a function on $G(\mathbb{A}_k)$ such that
\begin{equation}\label{eq:test_function_on_G}
\tilde{f}_{\mathbf{X}, T}(x)=f_{\mathbf{X}, T}(x) \text{ for $x\in X(\mathbb{A}_{k})$},
\end{equation}
for a model $\mathbf{X}$ of $X$ over $\mathcal{O}_k$ and $T>0$.
\subsubsection{\textbf{An asymptotic formula in terms of $\kappa$-orbital integrals}}
Our main theorem is as follows. 
\begin{theorem}[Theorem \ref{thm:main}]\label{intro:thm:main} 
    
    If $G'(k_\infty)$ is not compact for any non-trivial simple factor $G'$ of $G$ and $X$ satisfies the euqidistribution property (\ref{eq:equidistribution_property}), 
    then we have 
    \begin{equation}\label{intro:eq:mainthm}
     N(X;f_{\mathbf{X},T})\sim \sum_{\kappa\in H^1(k,G_\gamma(\mathbb{A}_{\bar{k}})/G_\gamma(\bar{k}))^*}\mathcal{O}_{\gamma}^\kappa(\tilde{f}_{\mathbf{X}, T}),\end{equation} 
    where 
    $\mathcal{O}_{\gamma}^\kappa(\tilde{f}_{\mathbf{X}, T})=
    \sum\limits_{\substack{\gamma'\in G(\mathbb{A}_k)/\sim_{\mathbb{A}}\\ \gamma'\sim_{st,\mathbb{A}}\gamma}}\kappa(inv(\gamma',\gamma))\mathcal{O}_{\gamma'}(\tilde{f}_{\mathbf{X}, T})$.
    Here, the explanation of these terms is as follows:
    \begin{itemize}
    \item For two functions $A(T)$ and $B(T)$ for $T$, we write $A(T)\sim B(T)$ if 
$\lim\limits_{T\rightarrow \infty}\frac{A(T)}{B(T)}=1$.
     
        \item $\sim_\mathbb{A}$ denotes the conjugacy relation in $G(\mathbb{A}_k)$ and $ \sim_{st, \mathbb{A}}$ denotes the stable conjugacy relation in $G(\mathbb{A}_k)$.
    \item For $\gamma'\in G(\mathbb{A}_k)$ such that $\gamma'\sim_{st,\mathbb{A}} \gamma$, the invariant $inv(\gamma',\gamma)$ is defined in Section \ref{subsec:orbit}.
    \item $\mathcal{O}_{\gamma'}(\tilde{f}_{\mathbf{X}, T})$ is the orbital integral for $\tilde{f}_{\mathbf{X}, T}$ at $\gamma'$ with respect to the Tamagawa measure on $X(\mathbb{A}_k)$ (see (\ref{eq:orbital_integral})).
    \end{itemize}
\end{theorem}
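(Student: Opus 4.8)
The plan is to derive (\ref{intro:eq:mainthm}) from the adelic integral description of $N(X;f_{\mathbf{X},T})$ due to Wei and Xu (Theorem \ref{thm:wx}), and then to translate that integral into a sum of $\kappa$-orbital integrals by carrying out, in the present counting context, the cohomological bookkeeping that underlies the pre-stabilization of the trace formula (Section \ref{sec:pre_stabilization}). The two hypotheses — non-compactness of every nontrivial simple factor of $G$ at the infinite places, and the equidistribution property (\ref{eq:equidistribution_property}) — are exactly what is needed to invoke Theorem \ref{thm:wx}, which produces an asymptotic of the shape
\[
N(X;f_{\mathbf{X},T})\ \sim\ \int_{X(\mathbb{A}_k)} f_{\mathbf{X},T}\cdot \Phi_X\,dm^X ,
\]
where $m^X$ is the Tamagawa measure on $X(\mathbb{A}_k)$ (Section \ref{sec:tamagawa_measure}) and $\Phi_X$ is the Brauer--Manin density, a function on $X(\mathbb{A}_k)$ expressed as a finite sum $\sum_{\chi}\chi$ of characters coming from $\mathrm{Br}_1(X)/\mathrm{Br}(k)$. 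Interchanging this finite sum with the integral reduces the problem to evaluating each twisted Tamagawa integral $\int_{X(\mathbb{A}_k)} f_{\mathbf{X},T}\cdot\chi\,dm^X$.

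Next I would identify the geometry. Since $G$ is simply connected, $X\cong G_\gamma\backslash G$ is the Zariski-closed conjugacy class of the elliptic regular element $\gamma$ (Remark \ref{rmk:conjugacy_and_pointsofX}), so $X(\mathbb{A}_k)$ is the stable conjugacy class of $\gamma$ in $G(\mathbb{A}_k)$ and decomposes as the disjoint union of the adelic conjugacy classes $\{\gamma' : \gamma'\sim_{st,\mathbb{A}}\gamma\}$, classified by the invariants $inv(\gamma',\gamma)$ of Section \ref{subsec:orbit}. Unfolding $X=G_\gamma\backslash G$ and using $\tau(G)=1$, the restriction of $m^X$ to the adelic conjugacy class of a given $\gamma'$ is precisely the measure defining the orbital integral $\mathcal{O}_{\gamma'}(\tilde{f}_{\mathbf{X},T})$ of (\ref{eq:orbital_integral}); here one must track the Tamagawa number of the anisotropic torus $G_\gamma$ and the compatibility of the local measures. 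Each character $\chi$ occurring in $\Phi_X$ is constant on each adelic conjugacy class, and the crucial point — supplied by Poitou--Tate duality for the torus $G_\gamma$, together with the identification $\mathrm{Pic}(\bar X)\cong X^*(G_\gamma)$ (valid because $G$ simply connected forces $\mathrm{Pic}(\bar G)=0$ and $\widehat{G}=0$) and the inclusion $\mathrm{Br}_1(X)/\mathrm{Br}(k)\hookrightarrow H^1(k,X^*(G_\gamma))$ — is that the group of characters underlying $\Phi_X$ is canonically $H^1(k,G_\gamma(\mathbb{A}_{\bar k})/G_\gamma(\bar k))^*$, and that under this identification $\chi$ takes the value $\chi(inv(\gamma',\gamma))$ on the adelic conjugacy class of $\gamma'$.

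Granting this, each twisted Tamagawa integral unwinds to $\sum_{\gamma'}\chi(inv(\gamma',\gamma))\,\mathcal{O}_{\gamma'}(\tilde{f}_{\mathbf{X},T}) = \mathcal{O}_{\gamma}^\chi(\tilde{f}_{\mathbf{X},T})$, and summing over $\chi\in H^1(k,G_\gamma(\mathbb{A}_{\bar k})/G_\gamma(\bar k))^*$ gives the right-hand side of (\ref{intro:eq:mainthm}). Finiteness of the index set over which $\chi$, resp.\ $\gamma'$, runs, and convergence of the orbital integrals and of the sum, follow from the anisotropy of $G_\gamma$ — which makes $G_\gamma(\mathbb{A}_k)/G_\gamma(k)$ compact and the relevant cohomology groups finite — together with the compact support of $f_{\mathbf{X},T}$ at the finite places built into the model $\mathbf{X}$.

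The main obstacle I expect is the matching described in the second paragraph: showing that Wei--Xu's Brauer--Manin density $\Phi_X$, assembled from $\mathrm{Br}_1(X)/\mathrm{Br}(k)$ and local invariant maps, corresponds term-by-term — including the measure-normalization constants, so that the implied constant in (\ref{intro:eq:mainthm}) is exactly $1$ — to the character sum over $H^1(k,G_\gamma(\mathbb{A}_{\bar k})/G_\gamma(\bar k))^*$ and the stabilization invariants $inv(\gamma',\gamma)$. This amounts to reconciling two a priori different cohomological packagings of the same obstruction: the Diophantine one, via Brauer groups and strong approximation, and the trace-formula one, via Galois cohomology of tori and Tate--Nakayama duality. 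Once this dictionary is in place, the remaining verification — that the Tamagawa normalization of $m^X$ on $G_\gamma\backslash G$ is compatible with the normalization implicit in the orbital integrals $\mathcal{O}_{\gamma'}$ — is routine.
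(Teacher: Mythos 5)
Your outline follows the same architecture as the paper's proof: start from Theorem \ref{thm:wx}, identify $X(\mathbb{A}_k)$ with the stable conjugacy class of $\gamma$ and decompose it into adelic conjugacy classes recorded by $inv(\cdot,\gamma)$, and then convert the Brauer--Manin data into the character sum over $H^1(k,G_\gamma(\mathbb{A}_{\bar k})/G_\gamma(\bar k))^*$. (Your ``density'' form $\int f_{\mathbf{X},T}\,\Phi_X\,m^X$ of Theorem \ref{thm:wx} is equivalent, by orthogonality of characters of the finite group $\Br X/\Br k$, to the form $|\Br X/\Br k|\int_{X(\mathbb{A}_k)^{\mathrm{Br}}}f_{\mathbf{X},T}\,m^X$ actually stated in the paper, so that substitution is harmless.)

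The genuine gap is exactly the step you defer as the ``main obstacle'': your route needs the term-by-term dictionary according to which each $\xi\in\Br X/\Br k$ corresponds to some $\kappa\in\mathfrak{K}(G_\gamma/k)^*$ and its adelic evaluation on the class of $\gamma'$ equals $\kappa(inv(\gamma',\gamma))$. Poitou--Tate/Tate--Nakayama give the abstract isomorphism of the two finite groups, but not by themselves the compatibility of the Brauer evaluation with the invariant map; that compatibility must be extracted from the explicit pairing of \cite[(2.4)]{CtX}, and you do not supply it. The paper is organized so as never to need this evaluation-level comparison: it uses \cite[Theorem 3.2]{CtX} only for the set-level description of $X(\mathbb{A}_k)^{\mathrm{Br}}$ as in (\ref{eq:Br_fixed_description}), invokes the Hasse principle for the simply connected group $G$ to replace the condition $inv(\gamma',\gamma)\in im(\alpha^1)$ by $inv(\gamma',\gamma)\in im(\alpha)$, and then reintroduces the $\kappa$'s through the elementary fact that $|\mathfrak{K}(G_\gamma/k)|^{-1}\sum_\kappa\kappa$, pulled back along (\ref{eq:mainthm_ses}), is the indicator of $im(\alpha)$; the constant $1$ in (\ref{intro:eq:mainthm}) then comes from the cardinality identity $|\Br X/\Br k|=|\mathfrak{K}(G_\gamma/k)|$ of Remark \ref{rmk:identification_of_cohomology}. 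So your plan is workable, but as written its central identification is assumed rather than proved; note also that the Hasse principle for $G$, essential in the paper's argument, does not appear in your outline, and the appeals to $\tau(G)=1$ and to the Tamagawa number of $G_\gamma$ are not needed for this theorem (they enter only in the pre-stabilization discussion of Section \ref{sec:pre_stabilization} and Remark \ref{rmk:main}); here one only needs that $m^X$ restricted to an adelic orbit is the quotient measure defining (\ref{eq:orbital_integral}).
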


\begin{remark}\label{rmk:equidistributionwhenGgamma}
    By Remark \ref{rmk:equidistribution_listup}, in order to verify the equidistribution property \eqref{eq:equidistribution_property} for $X$, it suffices to show that the sets $x\cdot G(k_\infty)\cap X(k_\infty,T)$ satisfy the nonfocusing condition in the sense of \cite[Definition 1.14]{EMS}.
    In particular, when $X$ is an $\mathrm{SL}_n$-homogeneous space under the additional assumptions specified in Section \ref{intro:application}, it satisfies the equidistribution property (see Remark \ref{rmk:equidistribution_for_SLn}).
\end{remark}

    In particular, for a suitable choice of $\mathbf{X}$ defining $f_{\mathbf{X},T}$, we establish a relation between the right-hand side of (\ref{intro:eq:mainthm}) and the geometric side of the trace formula.
    Let $\mathbf{G}$ be an affine model of $G$ over $\mathcal{O}_k$ 
    and let $\mathbf{X}$ be the schematic closure of $X$ in $\mathbf{G}$, appearing in \cite[Lemma 2.4.3]{GH19}.
    
    We take the function $f_{\mathbf{G},T}\in \mathcal{C}_c^\infty(G(\mathbb{A}_k))$ as defined in (\ref{eq:choice_of_G_integral}).
    Then the asymptotic equivalence (\ref{intro:eq:mainthm}) still holds when we replace   $\tilde{f}_{\mathbf{X},T}$ with $f_{\mathbf{G},T}$.
    In this case, the right-hand side of (\ref{intro:eq:mainthm}) coincides with the contribution to the trace formula of the stable conjugacy class of $\gamma$, after pre-stabilization (see (\ref{eq:pre_stabilization})).
    By reversing the steps for the pre-stabilization, we obtain the following asymptotic equivalence (see Remark \ref{rmk:main}):
\begin{equation}\label{intro:eq:corr_with_tf}
 N(X;f_{\mathbf{X},T})\sim \sum_{\substack{\gamma'\in G(k)/\sim_k\\ \gamma'\sim_{k,st}\gamma}}\tau(G_{\gamma'})\mathcal{O}_{\gamma'}(f_{\mathbf{G},T})=\sum_{\substack{\mfo'\in\mathcal{O}\\ \mfo'\sim_{k,st}\mfo}}J_{\mfo'}(f_{\mathbf{G},T}),
\end{equation}where $\sim_k$ denotes the conjugacy relation in $G(k)$ and $\sim_{k,st}$ denotes the stable conjugacy relation in $G(k)$.
\begin{remark}
The asymptotic equivalence (\ref{intro:eq:corr_with_tf}) suggests that, as $T\rightarrow \infty$, the main term of $ N(X;f_{\mathbf{X},T})$ should reflect automorphic data from the spectral side of the trace formula.
However, determining precisely which automorphic data are encoded in this term is difficult, due to the reasons mentioned in Remark \ref{rmk:main}.

On the other hand, by considering the spectral expansion of a function on $G(k)\backslash G(\mathbb{A}_k)$ associated with $ N(X;f_{\mathbf{X},T})$, one may obtain another spectral interpretation of the asymptotic behavior of $ N(X;f_{\mathbf{X},T})$.
According to \cite{Get18}, if such a spectral interpretation exists, then as $T\rightarrow \infty$, the main term of $ N(X;f_{\mathbf{X},T})$ is expected to correspond to the contribution of the trivial representation.
In Appendix \ref{app:spectral_expansion}, we show that our main results provide supporting evidence for this expectation.
\end{remark}

\subsubsection{\textbf{An asymptotic formula in terms of stable orbital integrals}}
Furthermore, using the endoscopic transfer (see Section \ref{sec:endoscopic_transfer}), the right-hand side of (\ref{intro:eq:corr_with_tf}) is rewritten in terms of stable orbital integrals of endoscopic groups, yielding a form that is especially suitable for the application to be carried out in Section \ref{intro:application}.
\begin{corollary}(Corollary \ref{cor:main_cor})\label{intro:cor:main_cor}
Under the same assumptions with Theorem \ref{intro:thm:main},
let $\mathbf{G}$ be an affine model of $G$ and $\mathbf{X}$ be the schematic closure of $X$ in $\mathbf{G}$. 
We have
\[
     N(X;f_{\mathbf{X},T})\sim \sum_{\substack{\kappa\in H^1(k,G_{0,\gamma_0}(\mathbb{A}_{\bar{k}})/G_{0,\gamma_0}(\bar{k}))^*\\(\gamma_0, \kappa)\mapsto((H,s,\eta),\ \gamma_{0,H})\\}}
    \mathcal{SO}_{\gamma_{0,H}}(f^H),
\]
where $G_0$ is the quasi-split inner form of $G$ and $G_{0,\gamma_0}$ is the centralizer of $\gamma_0$ in $G_0$.
Here, the explanation of these terms is as follows:
\begin{itemize}
    \item 
$\gamma_0\in G_0(k)$ lies in the stable conjugacy class in $G_0(k)$ corresponding to that of $\gamma \in G(k)$ under the map in \cite[Section 6]{Kot82}.
\item $(\gamma_0, \kappa)\mapsto ((H,s,\eta),\gamma_{0,H})$ denotes the correspondence in \cite[Lemma 9.7]{Kot86}.
\item
$\mathcal{SO}_{\gamma_{0,H}}(f^H)$ is the stable orbital integral at $\gamma_{0,H}$ for a function $f^H\in\mathcal{C}_c^\infty(H(\mathbb{A}_k))$ (see Definition \ref{def:stable_conjugacy}). 
Here, $f^H$ matches the function $f_{\mathbf{G},T}$ defined in (\ref{eq:choice_of_G_integral}),
in the sense of Theorem \ref{thm:smooth_transfer_and_fundamental_lemma}.
\end{itemize}
\end{corollary}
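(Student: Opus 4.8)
The plan is to feed the specialization of Theorem~\ref{intro:thm:main} to the function $f_{\mathbf{G},T}$ into Kottwitz's endoscopic transfer, passing through the quasi-split inner form $G_0$ of $G$; the corollary is then a formal consequence of Theorem~\ref{intro:thm:main} together with the transfer results recorded in Theorem~\ref{thm:smooth_transfer_and_fundamental_lemma}, and no new asymptotic analysis is required.

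First, take $\mathbf{X}$ to be the schematic closure of $X$ in an affine model $\mathbf{G}$ of $G$, so that, as explained after the statement of Theorem~\ref{intro:thm:main}, $\tilde f_{\mathbf{X},T}$ may be chosen to be the function $f_{\mathbf{G},T}$ of (\ref{eq:choice_of_G_integral}). Theorem~\ref{intro:thm:main} then gives
\[
N(X;f_{\mathbf{X},T})\ \sim\ \sum_{\kappa\in H^1(k,G_\gamma(\mathbb{A}_{\bar k})/G_\gamma(\bar k))^*}\mathcal{O}_\gamma^\kappa(f_{\mathbf{G},T}),
\]
a finite sum, since $G_\gamma$ is an anisotropic $k$-torus. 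Fix $\gamma_0\in G_0(k)$ in the stable conjugacy class corresponding to that of $\gamma$ under \cite[Section 6]{Kot82}. Since inner twisting is trivial on tori, the centralizers $G_\gamma$ and $G_{0,\gamma_0}$ are $k$-isomorphic; in particular $\gamma_0$ is again elliptic regular, $G_{0,\gamma_0}$ is an anisotropic maximal torus of $G_0$, and the indexing set for $\kappa$ is canonically identified with $H^1(k,G_{0,\gamma_0}(\mathbb{A}_{\bar k})/G_{0,\gamma_0}(\bar k))^*$.

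Now fix $\kappa$, and let $((H,s,\eta),\gamma_{0,H})$ be the endoscopic datum for $G_0$ together with the norm $\gamma_{0,H}\in H(k)$ of $\gamma_0$ attached to $(\gamma_0,\kappa)$ by \cite[Lemma 9.7]{Kot86}. By Theorem~\ref{thm:smooth_transfer_and_fundamental_lemma} there is $f^H\in\mathcal{C}_c^\infty(H(\mathbb{A}_k))$ matching $f_{\mathbf{G},T}$: the Langlands--Shelstad transfer supplies the components at the archimedean and the ramified finite places, while the fundamental lemma supplies the remaining components, at which $f_{\mathbf{G},T}$ is the characteristic function of $\mathbf{G}(\mathcal{O}_{k_v})$. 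Because $\gamma\in G(k)$ is $k$-rational, the product over all places of the local transfer factors equals $1$, so the matching takes the exact form
\[
\mathcal{O}_\gamma^\kappa(f_{\mathbf{G},T})\ =\ \mathcal{SO}_{\gamma_{0,H}}(f^H)\qquad\text{for every }T>0.
\]
Here one also uses that $G$ is simply connected semisimple, so that $\tau(G)=\tau(G_0)=1$ and the normalizing constants in Kottwitz's comparison of orbital integrals collapse, and one checks that the Tamagawa normalization of the orbital integrals in Theorem~\ref{intro:thm:main} is compatible with the canonically normalized transfer factors. Summing the displayed equality over all $\kappa$ converts $\sum_\kappa\mathcal{O}_\gamma^\kappa(f_{\mathbf{G},T})$ into $\sum_\kappa\mathcal{SO}_{\gamma_{0,H}}(f^H)$ --- an honest equality of functions of $T$, with $(H,s,\eta)$ and $\gamma_{0,H}$ determined by $\kappa$ as above --- and combining with the asymptotic at the start of the argument proves the corollary.

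The main obstacle lies in the input Theorem~\ref{thm:smooth_transfer_and_fundamental_lemma}, specifically in the fact that $f_{\mathbf{G},T}$ is not a $\mathcal{C}_c^\infty$ (or Schwartz) function: at the finite places it is the indicator of an integral subgroup, which is harmless and is precisely where the fundamental lemma applies, but at the archimedean places it is the indicator of the norm ball $G(k_\infty,T)$, which is neither smooth nor conveniently conjugation-stable. One must therefore verify that the Langlands--Shelstad transfer, and the identity $\Delta\cdot\mathcal{O}^\kappa=\mathcal{SO}$, extend to this class of functions; it suffices that the orbital integrals converge and that the archimedean transfer factors are locally constant along the relevant stable conjugacy classes, so that $f_{\mathbf{G},T}$ may be treated directly or approximated by genuine test functions without altering the orbital integrals in play. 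A secondary, purely bookkeeping, difficulty is keeping the normalizations of Haar and Tamagawa measures, of transfer factors, and of the relevant first cohomology groups consistent across the inner twist $G\rightsquigarrow G_0$ and over all places of $k$.
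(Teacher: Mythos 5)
Your proposal follows essentially the same route as the paper: specialize Theorem \ref{thm:main} to $f_{\mathbf{G},T}$ (as justified in Remark \ref{rmk:main}), pass to the quasi-split inner form $G_0$ so that the $\kappa$-sum is indexed by $H^1(k,G_{0,\gamma_0}(\mathbb{A}_{\bar{k}})/G_{0,\gamma_0}(\bar{k}))^*$ (the paper carries out this step via the invariant $\mathrm{obs}$ of Remark \ref{rmk:stabilization_innerform}), and then convert each $\kappa$-orbital integral into the stable orbital integral of the endoscopic group attached to $(\gamma_0,\kappa)$ by \cite[Lemma 9.7]{Kot86} together with Theorem \ref{thm:smooth_transfer_and_fundamental_lemma}. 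One small imprecision: the identity $\mathcal{O}^{\kappa}_{\gamma_0}(f_{\mathbf{G},T})=\mathcal{SO}_{\gamma_{0,H}}(f^H)$ rests on the global product formula for transfer factors in the form of Remark \ref{rmk:transfer_factor}, namely $\prod_v\Delta_v(\gamma_{H},\gamma'_v)=\kappa(inv(\gamma',\gamma))$ for adelic $\gamma'$ stably conjugate to $\gamma$, not on the product being $1$ in general; your phrasing recovers this only for the rational class itself.

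The ``main obstacle'' you identify is not actually present. By the definition (\ref{eq:choice_of_G_integral}), the archimedean components $f_{\mathbf{G},T,v}$ are not indicators of the norm ball: they are smooth, compactly supported functions concentrated near the finitely many $G(k_v)$-conjugacy classes in $X(k_v)$, normalized so that their orbital integrals equal $vol(m_v^X,\gamma'_v\cdot G(k_v)\cap X(k_v,T))$. Hence $f_{\mathbf{G},T}\in\mathcal{C}_c^\infty(G(\mathbb{A}_k))$ and Theorem \ref{thm:smooth_transfer_and_fundamental_lemma} applies directly, with no need to extend the Langlands--Shelstad transfer to non-smooth functions or to argue by approximation; the non-smooth function $\tilde{f}_{\mathbf{X},T}$ enters only through Theorem \ref{thm:main}, and the replacement by $f_{\mathbf{G},T}$ in Remark \ref{rmk:main} is precisely what makes the endoscopic machinery available.
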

\subsubsection{\textbf{Sketch of proof}}
We briefly sketch the proof of Theorem \ref{intro:thm:main}.
Our strategy is based on Theorem \ref{thm:wx}, the main result of Wei and Xu in \cite{WX}, which allows us to reduce the problem into the integration of $f_{\mathbf{X},T}$ over $X(\mathbb{A}_k)^{\mathrm{Br}}$.
Here, $X(\mathbb{A}_k)^{\mathrm{Br}}$ denotes the subset of $X(\mathbb{A}_k)$ which is annihilated by Brauer evaluations attached to elements of $\Br X:=H^2_{\acute{e}t}(X,\mathbb{G}_m)$, defined in Definition \ref{def:brauer_evaluation}.

Our key observation, based on \cite[Theorem 3.2]{CtX} and the Hasse principle for $G$ (see \cite[Definition 5.3]{Kal}), is that $X(\mathbb{A}_k)^{\mathrm{Br}}$ coincides with the subset of $X(\mathbb{A}_k)$ consisting of points whose $G(\mathbb{A}_k)$-orbit contains a $k$-point.
This allows us to reinterpret the integral over $X(\mathbb{A}_k)^{\mathrm{Br}}$ as the sum of integrals over such $G(\mathbb{A}_k)$-orbits.
By identifying $X(\mathbb{A}_k)$ with the stable conjugacy class of $\gamma$ in $G(\mathbb{A}_k)$, the integral of $f_{\mathbf{X},T}$ over an orbit $\gamma'\cdot G(\mathbb{A}_k)$ in $X(\mathbb{A}_k)$ turns out to be an orbital integral for $\tilde{f}_{\mathbf{X},T}$ at $\gamma'$.
This identification also allows us to employ tools from the pre-stabilization of the trace formula.
One such tool is the following function from the set of $G(\mathbb{A}_k)$-orbits in $X(\mathbb{A}_k)$ to $\mathbb{C}^\times$:
\[
\gamma'\cdot G(\mathbb{A}_k)\mapsto 
|H^1(k,G_\gamma(\mathbb{A}_{\bar{k}})/G_\gamma(\bar{k}))|^{-1}\sum\limits_{\kappa\in H^1(k,G_\gamma(\mathbb{A}_{\bar{k}})/G_\gamma(\bar{k}))^*}\kappa(inv(\gamma',\gamma)),\]
which vanishes if the orbit $\gamma'\cdot G(\mathbb{A}_k)$ does not contain a $k$-point, and equals 1 otherwise. 
Via these observations, we arrive at the equality (\ref{intro:eq:mainthm}). 

\subsection{Application}\label{intro:application}
Let $X\cong S_\gamma \backslash \mathrm{SL}_n$, where $S_\gamma$ is the centralizer of an elliptic regular element $\gamma \in \mathrm{SL}_n(k)$.
Let $\chi(x)$ be the characteristic polynomial of $\gamma$.
We define \[N(X,T):= N(X;f_{\mathbf{X},T}),\]where $f_{\mathbf{X},T}$ is defined in (\ref{def:alg_ftn}) for the schematic closure $\mathbf{X}$ of $X$ in $\mathrm{SL}_{n,\mathcal{O}_k}$.
Then $X$ is identified with the variety over $k$ which represents the set of $n\times n$ matrices whose characteristic polynomial equals $\chi(x)$.

Note that $\gamma \in \mathrm{SL}_n(k)$ is an elliptic regular element if and only if its characteristic polynomial $\chi(x)$ is irreducible.
For a modified norm $\norm{\cdot}_\infty$ defined by
\begin{equation}\label{intro:eq:modified_norm}
\norm{x}_{\infty}:=\max_{v\in\infty_k}\norm{x}_v
~~~~~~\text{ where }~~
\norm{x}_v = \sqrt{\sum_{1\leq i,j\leq n} |x_{ij}|_v^2},
\end{equation}
Eskin-Mozes-Shah \cite[Theorem 1.1 and Theorem 1.16]{EMS} estabilished the asymptotic formula for $N(X,T)$:
\begin{equation}\label{intro:eq:EMS_result}
            N(X,T)\sim \frac{2^{n-1}h_{K} R_{K} w_n}{\sqrt{\Delta_{\chi}}\cdot
            \prod_{k=2}^n\Lambda(k/2)}T^{\frac{n(n-1)}{2}}, 
\end{equation}
under the following restrictions
\begin{equation}\label{intro:eq:conditionon_EMS}\left\{
\begin{array}{l}
\textit{(1) $k=\Q$}; \\
\textit{(2) $\mathbb{Z}[x]/(\chi(x))$ is the ring of integers of $K:=\mathbb{Q}[x]/(\chi(x))$};\\ 
\textit{(3) $\mathbb{Q}[x]/(\chi(x))$ is totally real, equivalently $\chi(x)$ splits completely over $\R$}.
\end{array}\right.\end{equation}
Here, we use the following notations 
\begin{itemize}
    \item $h_{K}$ is the class number of the ring of integers $\mathcal{O}_K$ of $K$ and $R_{K}$ is the regulator of $K$.
    \item $\Delta_{\chi}$ is the discriminant of the polynomial $\chi(x)$, $w_n$ is the volume of the unit ball in $\mathbb{R}^{\frac{n(n-1)}{2}}$, and $\Lambda(s)=\pi^{-s}\Gamma(s)\zeta(2s)$.
\end{itemize}

In particular, the condition (2) simplifies the computation of local integrals (see \cite[Corollary 6.5]{WX}); without it, such computations become highly nontrivial.
By applying Theorem \ref{intro:thm:main}, we generalize the first two restrictions (1) and (2) in (\ref{intro:eq:conditionon_EMS}), thereby extending the previous result to a broader setting.
Significantly, our approach allows the aforementioned local computations to be formulated in terms of orbital integrals without assuming the condition (2).

More precisely, under the following conditions:
\begin{equation}\label{i:intro:condition_ours}
\left\{
\begin{array}{l}
     \textit{(1) $k$ and $K\left(:=k[x]/(\chi(x))\right)$ are totally real number fields};  \\
     \textit{(2) if $k\neq \mathbb{Q}$, then  $\chi(x)$ is of prime degree $n$}
     ;\\
     \textit{(3) if $k=\mathbb{Q}$, then there is \textbf{no restriction} on $\chi(x)$},
\end{array}
\right.
\end{equation}
we describe the asymptotic formula for $N(X, T)$ using orbital integrals of $\mathrm{GL}_n$.
Here, the condition (2) for $k\neq \mathbb{Q}$ is imposed for technical reasons: it eases the computation of endoscopic groups and corresponding stable orbital integrals.
When $k=\mathbb{Q}$, this condition is not required. 
Hence, our result extends the work of Eskin–Mozes–Shah under the assumptions in (\ref{intro:eq:conditionon_EMS}).

\begin{theorem}[Theorem \ref{thm:application}]\label{intro:thm:application}
Let $\chi(x)\in\mathcal{O}_k[x]$ be an irreducible polynomial such that $\chi(0)=1$.
Let $\mathbf{X}$ be an $\mathcal{O}_k$-scheme representing the set of $n\times n$ integral matrices whose characteristic polynomial is $\chi(x)$ and $X:=\mathbf{X}\otimes_{\mathcal{O}_k}k$.
We define \[N(X, T)=|\{x\in \mathbf{X}(\mathcal{O}_k)\mid \norm{x}_\infty\leq T\}|,\]for $T>0$, where the norm $\norm{\cdot}_\infty$ is defined in (\ref{intro:eq:modified_norm}).
Under the condition (\ref{i:intro:condition_ours}), we have the following asymptotic formulas.
\begin{enumerate}
        \item If $K/k$ is not Galois or ramified Galois, then
        \[ N(X, T) \sim C_T\prod_{v<\infty_k}\frac{\mathcal{O}_{\gamma,d\mu_v}^{\mathrm{GL}_n}(\mathbbm{1}_{\mathrm{GL}_n(\mathcal{O}_{k_v})})}{q_v^{S_v(\gamma)}}.
        \]
    
        \item If $K/k$ is unramified Galois, then
        \[N(X, T) \sim C_T \left(\prod_{v<\infty_k}\frac{\mathcal{O}_{\gamma,d\mu_v}^{\mathrm{GL}_n}(\mathbbm{1}_{\mathrm{GL}_n(\mathcal{O}_{k_v})})}{q_v^{S_v(\gamma)}}+n-1\right).\]
\end{enumerate}\noindent
Here, $v<\infty_k$ denotes the finite places of $k$, $\mathcal{O}_{\gamma,d\mu_v}^{\mathrm{GL}_n}(\mathbbm{1}_{\mathrm{GL}_n(\mathcal{O}_{k_v})})$ denotes the orbital integral of $\mathrm{GL}_n$ for $\mathbbm{1}_{\mathrm{GL}_n(\mathcal{O}_{k_v})}$ at $\gamma$ with respect to the measure $d\mu_v$ defined in (\ref{eq:quotient_measure}),
    \[C_T:= |\Delta_k|^{\frac{-n^2+n}{2}} \frac{R_K h_K \sqrt{\abs{\Delta_K}}^{-1}}{R_k h_k \sqrt{\abs{\Delta_k}}^{-1}} \left(\prod_{i=2}^n \zeta_k(i)^{-1}\right)\left( \frac{2^{n-1}w_n\pi^{\frac{n(n+1)}{4}} }{\prod_{i=1}^n \Gamma(\frac{i}{2})} T^{\frac{n(n-1)}{2}}  \right)^{[k:\Q]},\]
    and we use the following notations:
    \begin{itemize}
        \item  $R_F$ is the regulator of $F$, $h_F$ is the class number of the ring of integers $\mathcal{O}_F$ of $F$, and $\Delta_F$ is the discriminant of $F/\mathbb{Q}$ for $F=k$ or $K$. 
        \item
    $w_n$ is the volume of the unit ball in $\mathbb{R}^{\frac{n(n-1)}{2}}$, and $\zeta_k$ is the Dedekind zeta function of $k$. 
    \item $q_v$ is the cardinality of the residue field of $k_v$, and the notation $S_v(\gamma)$ is defined in Section \ref{sec:application}.
\end{itemize}
\end{theorem}
To provide context for this result, we recall existing results on  $\mathcal{O}_{\gamma,d\mu_v}^{\mathrm{GL}_n}(\mathbbm{1}_{\mathrm{GL}_n(\mathcal{O}_{k_v})})$, which are summarized in Remark \ref{rmk:about_orbitalintegral_of_GLn}.
In brief, this orbital integral is an integer given by a $q_v$-polynomial whose first leading term is expected to be $q_v^{S_v(\gamma)}$, and it equals $1$ for almost all $v<\infty_k$ by \cite{Yun13}.
In particular cases, including $n=2$ and $n=3$, explicit closed formulas were  obtained in joint works of the author (see \cite{CKL} and \cite{CHL}).

\begin{remark}
    In \cite{EMS}, the asymptotic formula for $N(X,T)$ is established under (\ref{intro:eq:conditionon_EMS}) without assuming  $\chi(0)=1$, so that $\gamma$ is not required to lie in $\mathrm{SL}_n(\mathcal{O}_k)$.
    In contrast, in our setting, we impose the additional assumption $\chi(0)=1$ together with (\ref{i:intro:condition_ours}) in order to use the $\mathrm{SL}_n$-homogeneous space structure of $X$.
    
    However, in another joint paper of the author \cite{JL}, we compute the asymptotic formula for $N(X,T)$ under (\ref{i:intro:condition_ours}) without assuming $\chi(0)=1$, by directly computing the Brauer evaluation. The resulting formula still takes the form stated in Theorem \ref{intro:thm:application}, thereby providing a full generalization of the result of \cite{EMS}. 
\end{remark}

\textbf{Organization.} We outline the structure of the article as follows. 
In Section \ref{sec:orbits_and_measures}, we review the descriptions of the $G(F)$-orbits in $X(F)$ for $F=k,k_v,$ and $\mathbb{A}_k$, and recall the Tamagawa measure on a connected reductive group and its homogeneous space.
Section \ref{sec:refined_HL_expectation} presents the result in \cite{WX} and introduce the related concepts of the Brauer evaluation. 
Arthur's trace formula and the stabilization of its elliptic regular part are discussed in Section \ref{sec:arthur's_trace_formula}.
Our main theorem is stated in Section \ref{sec:main_result}, along with an explanation of a connection between the asymptotic formula of $ N(X;f_{\mathbf{X},T})$ and Arthur's trace formula.
Finally, Section \ref{sec:application} provides an application to an $\mathrm{SL}_n$-homogeneous space. 
The supplementary explanations for the computations in Section \ref{sec:application} are provided in Appendix \ref{sec:appendix_A}.
In Appendix \ref{app:spectral_expansion}, we discuss the spectral interpretation of $ N(X;f_{\mathbf{X},T})$ using the $L^2$-expansion.
\vspace{1em}

\textbf{Acknowledgments.}
The author would like to acknowledge Sungmun Cho for assisting in initiating this work and providing valuable comments, and Fei Xu for sharing information on the Brauer–Manin obstruction and engaging in discussions. 
The author is also grateful to Jayce Getz for discussions on the description of $L^2$-expansion, to Sug Woo Shin for discussions on Arthur’s trace formula, and to Seongsu Jeon and Jungtaek Hong for various collaborative discussions.
\subsection{Notations}
\begin{itemize}
\item Let $k$ be a number field with the ring of integers $\mathcal{O}_k$, $\bar{k}$ be an algebraic closure of $k$, and $\Gamma$ be the Galois group $\mathrm{Gal}(\bar{k}/k)$.
\item Let $\Omega_k$ be the set of places of $k$ and let $\infty_k$ be the set of infinite places of $k$. We use $v<\infty_k$ to denote the finite places of $k$. 
\item For $v\in \Omega_k$, we denote the normalized absolute value associated with $v$ by $|\cdot|_v$ and the completion with respect to $|\cdot|_v$ by $k_v$.

\item Let $\mathbb{A}_k$ be the adeles of $k$ and $k_\infty:=\prod_{v\in\infty_k}k_v$.

\item 
For $v \in \Omega_k \setminus \infty_k$, we define the following notations
\[\left\{\begin{array}{l}
\mathcal{O}_{k_v}:\textit{ the ring of integers of $k_v$};\\
\pi_v:\textit{ a uniformizer of $\mathcal{O}_{k_v}$};\\
\kappa_v:\textit{ the residue field of $\mathcal{O}_{k_v}$};\\
q_v:\textit{ the cardinality of the residue field $\kappa_v$}.
\end{array}
\right.\]
For an element $x\in k_v$, the exponential order of $x$ with respect to the maximal ideal in $\mathcal{O}_{k_v}$ is written by $\ord_v(x)$. We then have that $|x|_v=q_v^{-\ord_v(x)}$.

\item 
For two functions $A(T)$ and $B(T)$ for $T$, we say that $A(T)$ and $B(T)$ is asymptotically equivalent if the following relation holds 
\begin{equation}\label{eq:asymptotically_equivalent}
\lim\limits_{T\rightarrow \infty}\frac{A(T)}{B(T)}=1 \text{, denoted by $A(T)\sim B(T)$}.
\end{equation} 

\item
For a finite field extension $E/F$ where $F$ is $\Q$ or $k_v$, let $\Delta_{E/F}$ be the discriminant ideal of $E/F$.
\begin{itemize}
    \item 
If $F = \Q$, then we denote $\Delta_{E/\Q}$ by $\Delta_E$, and $\abs{\Delta_{E}}$ by the absolute value of a generator of $\Delta_{E}$ as an ideal in $\mathbb{Z}$.
\item
If $F = k_v$, then $\ord_v(\Delta_{E/k_v})$ (resp. $|\Delta_{E/k_v}|_v$) denotes the exponential order (resp. the normalized absolute value) of a generator of $\Delta_{E/k_v}$ as an ideal in $\mathcal{O}_{k_v}$.
\end{itemize}

\item For a finite set $A$, $|A|$ denotes the cardinality of $A$.

\item$\pi_0(-)$ denotes the group of connected components, $(-)^*$ denotes the Pontryagin dual, and $(-)_{tors}$ denotes the torsion subgroup.
\end{itemize}

\section{Homogeneous space and Measure}\label{sec:orbits_and_measures}
Let $G$ be a connected reductive group over $k$ and let $X$ be a right homogeneous space of $G$ with  
\begin{equation}\label{eq:generalGXmap}
\varphi_G:G\rightarrow X,\ g\mapsto x_0 \cdot g,
\end{equation}
for a fixed point $x_0 \in X(k)$. 
Let $H$ be the stabilizer of $x_0$ in $G$, so that the map $\varphi_G$ induces an isomorphism $H\backslash G \cong X$.
We assume that $H$ is a connected and reductive.


\subsection{Orbits}\label{subsec:orbit}
For a field $F=k$ or $k_v$ with $v\in \Omega_k$, the exact sequence of $F$-varieties
\[
1\rightarrow H_F \rightarrow G_F \xrightarrow{\varphi_G} X_F \rightarrow 1 
\]
induces the long exact sequence of pointed sets
\begin{equation}\label{eq:k-long_exact_sequence}
\cdots \rightarrow G(F)\rightarrow X(F)\rightarrow H^1(F,H_F)\rightarrow H^1(F,G_F)\rightarrow \cdots.
\end{equation}
Then the set of $G(F)$-orbits in $X(F)$ is in canonical bijection with $\ker(H^1(F,H)\rightarrow H^1(F,G))$.
According to \cite[Proposition 36 in Chapter 1]{Ser2}, the connecting homomorphism in (\ref{eq:k-long_exact_sequence}) is given by
\[
X(F)\rightarrow H^1(F,H),\ x\mapsto (\tau\in \mathrm{Gal}(\bar{F}/F)\mapsto \tau(g)g^{-1}),
\]
where $g\in G(\bar{k})$ such that $x_0\cdot g=x$.
In particular, when $F=k_v$, we denote the class of cocycles in $H^1(k_v,H)$ corresponding to $x\in X(k_v)$ by $inv_v(x,x_0)$.

Replacing $F$ by the adele ring $\mathbb{A}_k$, we have the long exact sequence
\begin{equation}\label{eq:Ak-long_exact_sequence}
\cdots \rightarrow G(\mathbb{A}_k)\rightarrow X(\mathbb{A}_k)\rightarrow H^1(k,H(\mathbb{A}_{\bar{k}}))\rightarrow H^1(k,G(\mathbb{A}_{\bar{k}}))\rightarrow \cdots,
\end{equation}
where $\mathbb{A}_{\bar{k}}$ denotes the adele ring of $\bar{k}$, which is identified with a direct limit of the adele ring $\mathbb{A}_K$ of $K$ over all finite extensions $K/k$.
In this case, the set of $G(\mathbb{A}_k)$-orbits in $X(\mathbb{A}_k)$ corresponds to $\ker(H^1(k,H(\mathbb{A}_{\bar{k}}))\rightarrow H^1(k,G(\mathbb{A}_{\bar{k}})))$. 
We denote by $inv(x,x_0)$ the element in $H^1(k,H(\mathbb{A}_{\bar{k}}))$ corresponding to $x\in X(\mathbb{A}_k)$ via the connecting homomorphism $X(\mathbb{A}_k)\rightarrow H^1(k,H(\mathbb{A}_{\bar{k}}))$ in (\ref{eq:Ak-long_exact_sequence}). 

\begin{definition}
    For $F=k$ or $k_v$ with $v\in \Omega_k$, we denote $\ker(H^1(F,H)\rightarrow H^1(F,G))$ by $\ker_F(X)$.
    For the adele ring $\mathbb{A}_k$, we denote $\ker(H^1(k,H(\mathbb{A}_{\bar{k}}))\rightarrow H^1(k,G(\mathbb{A}_{\bar{k}})))$ by $\ker_\mathbb{A}(X)$.
    If there is no ambiguity, we abbreviate $\ker_F$ and $\ker_\mathbb{A}$ as $\ker_F(X)$ and $\ker_\mathbb{A}(X)$, respectively.
\end{definition}

\begin{remark}\label{rmk:ker_of_alpha}
From the embedding $k\hookrightarrow \mathbb{A}_k$, we have a natural morphism from (\ref{eq:k-long_exact_sequence}) to (\ref{eq:Ak-long_exact_sequence}).
This induces the map
\begin{equation}\label{eq:parametrizing_orbits_compare}
\alpha: \ker_k\rightarrow \ker_\mathbb{A}.
\end{equation}
The kernel of $\alpha$ parametrizes distinct $G(k)$-orbits inside of $X(k)$ which lie within a single $G(\mathbb{A}_k)$-orbit inside of $X(\mathbb{A}_k)$, via the natural embedding $X(k)\hookrightarrow X(\mathbb{A}_k)$.
\end{remark}


\subsection{Measures}\label{sec:tamagawa_measure}
On the adelic points $X(\mathbb{A}_k)$, we use the canonical measure, called the Tamagawa measure, which is induced by a top-degree volume form on $X$.
In this section, based on the $G$-homogeneous space structure $X\cong H\backslash G$, we define the Tamagawa measure on $X(\mathbb{A}_k)$ which is compatible with those on $G(\mathbb{A}_k)$ and $H(\mathbb{A}_k)$.
Our construction mainly follows \cite[Section 2]{Wei82} and \cite[Section 1]{BR}.

\begin{definition}\label{def:gauge_form}
For a geometrically irreducible non-singular algebraic variety $V$ over $k$,
a gauge form on $V$ is a nowhere-zero and regular differential form in $\bigwedge^{\dim V}\Omega_{V/k}(V)$, where $\Omega_{V/k}$ denotes a cotangent sheaf of $V$.
We say a gauge form is invariant if it is translation-invariant.
\end{definition}

Let $\omega_G$, $\omega_H$, and $\omega_X$ be an invariant gauge form on $G$, an invariant gauge form on $H$, and a $G$-invariant gauge form on $X$, respectively. 
    Let $\varphi_G^*\omega_X$ be the pullback of $\omega_X$ along the map $\varphi_G$ in (\ref{eq:generalGXmap}), and let $\widetilde{\omega}_H$ be a lifting of $\omega_H$ on $G$ such that, for every $g\in G$, $\widetilde{\omega}_H(gh)$ induces on $H$ the form $\omega_H(h)$ for $h\in H$.
    Then the form $\varphi_G^*\omega_X\wedge \widetilde{\omega}_H$ is a gauge form on $G$ which is independent of the choice of a lifting $\widetilde{\omega}_H$.
\begin{definition}[{\cite[Section 2.4, p24]{Wei82}}]\label{deg:algmatch}
    We say that $\omega_G$, $\omega_H$, and $\omega_X$ match together algebraically, if $\omega_G=\varphi_G^*\omega_X\wedge \tilde{\omega}_H$ and denote it by $\omega_G=\omega_X\cdot \omega_H$.
\end{definition}
Since $G$ and $H$ are connected reductive groups over $k$, they are unimodular.
As shown in \cite[Corollary of Theorem 2.2.2]{Wei82}, $G$ and $H$ admit invariant gauge forms.
Moreover, $X$ admits a $G$-invariant gauge form $\omega_X$ by \cite[Section 1.4]{BR}.
For any choice of invariant gauge forms $\omega_G$ on $G$ and $\omega_H$ on $H$, we may multiply $\omega_X$ by a suitable constant in $k^\times$ so that $\omega_G = \omega_X \cdot \omega_H$, as explained in \cite[p.~24]{Wei82}.

For a gauge form  $\omega_V$ on a variety $V$ over $k$ considered in Definition \ref{def:gauge_form}, we denote by $|\omega_V|_v$ the measure on $V(k_v)$ induced from the canonical Haar measure on $k_v$ for each $v \in \Omega_k$ (see \cite[Section 2.2.1]{Wei82}).
We then have the following proposition, presented right after \cite[Lemma 1.6.4]{BR}.
\begin{proposition}\label{prop:alg-top_match}
    Let $\omega_G,\omega_H$, and $\omega_X$ be an invariant gauge form on $G$, an invariant gauge form on $H$, and a $G$-invariant gauge form on $X$, respectively, such that $\omega_G=\omega_X\cdot \omega_H$. 
    Then we have
    \begin{equation}\label{eq:topologicallymatch}
    \int_{G(k_v)}f(g)\ |\omega_G|_v=\int_{H(k_v)\backslash G(k_v)}\int_{H(k_v)}f(hg) \ |\omega_H|_v  |\omega_X|_v.
    \end{equation}
    Here we note that $H(k_v)\backslash G(k_v)$ is identified with an open subset in $X(k_v)$.
    For a triple of measures $(|\omega_G|_v,|\omega_H|_v,|\omega_X|_v)$ satisfying (\ref{eq:topologicallymatch}), we say that $|\omega_G|_v,|\omega_H|_v,$ and $|\omega_X|_v$ match together topologically.
\end{proposition}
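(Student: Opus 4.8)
The plan is to reduce the identity to the classical "quotient of Haar measures" statement over the local field $k_v$, using the algebraic matching $\omega_G = \varphi_G^*\omega_X \wedge \widetilde{\omega}_H$ as the bridge between the algebraic and analytic pictures. First I would fix $v \in \Omega_k$ and work on a Zariski-open subset $U \subseteq X$ over which the $H$-torsor $\varphi_G \colon G \to X$ admits a section; shrinking $U$ if necessary, $\varphi_G^{-1}(U) \cong U \times H$ as $k_v$-analytic manifolds via this section, and $H(k_v)\backslash G(k_v)$ is identified with an open subset of $X(k_v)$ as noted in the statement. The key point is that the construction of the measure $|\omega|_v$ attached to a gauge form is local and functorial: on a coordinate chart it is literally $|f|\,dx_1\cdots dx_d$ where $\omega = f\,dx_1\wedge\cdots\wedge dx_d$ and $dx_i$ is the self-dual Haar measure on $k_v$ (this is \cite[Section 2.2.1]{Wei82}). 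Therefore the wedge decomposition $\omega_G = \varphi_G^*\omega_X \wedge \widetilde{\omega}_H$ of differential forms translates, via Fubini on the product chart $U \times H$, into a product decomposition of the associated measures $|\omega_G|_v = |\omega_X|_v \otimes |\omega_H|_v$ relative to the local trivialization.

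Next I would upgrade this local product decomposition to the global fibration formula. For $f$ continuous with compact support on $G(k_v)$, cover $G(k_v)$ by finitely many such trivializing open sets (using a partition of unity subordinate to the cover, which exists since $k_v$-analytic manifolds are paracompact), and on each piece apply the product-measure identity together with translation-invariance of $|\omega_H|_v$: the lifting $\widetilde{\omega}_H$ is chosen so that $\widetilde{\omega}_H(gh)$ restricts to $\omega_H(h)$ on each fiber $gH$, so integration along the fiber over a point $x = x_0 \cdot g \in U(k_v)$ is exactly $\int_{H(k_v)} f(hg)\,|\omega_H|_v$ after identifying the fiber with $H(k_v)$ by $h \mapsto hg$. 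Summing the contributions over the partition of unity and using that the pieces glue (the forms, hence the measures, are globally defined), the local contributions assemble into $\int_{H(k_v)\backslash G(k_v)} \left(\int_{H(k_v)} f(hg)\,|\omega_H|_v\right) |\omega_X|_v$, which is (\ref{eq:topologicallymatch}). Independence of the choice of lifting $\widetilde{\omega}_H$ was already recorded before Definition \ref{deg:algmatch}, so the right-hand side is well-defined.

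The main obstacle is bookkeeping rather than conceptual: one must check that the local trivializations of the torsor are compatible with the gauge forms in a way that makes the Fubini step legitimate, and in particular that no Jacobian factor other than the one already encoded in $\omega_G = \omega_X \cdot \omega_H$ appears when passing between overlapping charts. This is handled by the transformation rule for $|\omega|_v$ under change of coordinates — namely $|\omega|_v$ transforms by $|{\det J}|_v$, which is precisely the factor by which the differential form $\omega$ transforms — so that the global consistency of the differential forms forces global consistency of the measures. Since $H$ is unimodular (being connected reductive, as noted in the excerpt), the fiber integral $\int_{H(k_v)} f(hg)\,|\omega_H|_v$ is unambiguous, and the coset space $H(k_v)\backslash G(k_v)$ carries the well-defined quotient measure $|\omega_X|_v$; this is exactly the content of \cite[Section 2]{Wei82} to which the statement refers, so in fact the cleanest writeup simply invokes that reference after setting up the local trivialization, with the verification above supplied for completeness.
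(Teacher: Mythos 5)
The paper does not actually prove this proposition: it is quoted as the statement appearing right after Lemma~1.6.4 of \cite{BR}, which in turn is Weil's matching theorem from \cite[Section 2.4]{Wei82}. Your plan reconstructs exactly that standard argument (local trivialization of the fibration $G\to X$, Fubini for the measure attached to a wedge decomposition of gauge forms, patching by a partition of unity, and invariance/unimodularity to make the fiber integral and the quotient measure well defined), so in substance you are following the same route as the cited source rather than a different one.

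One step as written is false, though, and it is worth fixing because it matters precisely in the situation of this paper. You begin by choosing a Zariski-open $U\subseteq X$ over which the $H$-torsor $\varphi_G\colon G\to X$ admits a section. Such a $U$ need not exist: an $H$-torsor is only locally trivial for the étale (fppf) topology unless $H$ is a special group, and the case of interest here is $H=G_\gamma$ an anisotropic maximal torus, which is not special (it has nontrivial $H^1$ over field extensions); indeed the whole point of Section~2.1 of the paper is that $X(k_v)$ breaks into several $G(k_v)$-orbits, which already rules out a global or Zariski-local algebraic section in general. The correct and standard substitute is analytic: $\varphi_G$ is a smooth morphism, so on $k_v$-points it is a submersion of $v$-adic analytic manifolds onto its open image $x_0\cdot G(k_v)\cong H(k_v)\backslash G(k_v)\subseteq X(k_v)$, and the $v$-adic implicit function theorem provides local analytic sections over that image. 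Running your Fubini-plus-partition-of-unity argument with these analytic local sections (and only over the open orbit, which is exactly the domain of the outer integral in (\ref{eq:topologicallymatch})) gives the claim; the rest of your verification, including the Jacobian bookkeeping via the transformation rule for $|\omega|_v$ and the independence of the lifting $\widetilde{\omega}_H$, is correct and is the content of \cite[Section 2.4]{Wei82}.
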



For a connected reductive group $K$, let $\rho_K$ denote the representation of $\mathrm{Gal}(\bar{k}/k)$ in the space $X^*(K)\otimes\mathbb{Q}$ and let $t_K$ be the rank of $X^*(K)$, where $X^*(K)$ denotes the group of $k$-characters of $K$.
We define the following factors:
\[\left\{
\begin{array}{l}
     \lambda_v^K=L_v(1,\rho_K)^{-1}\text{ and}; \\
     r_K=\lim_{s\rightarrow 1}(s-1)^{t_K}L(s,\rho_K),
\end{array}
\right.\] where $L(s,\rho_K)=\prod_{v<\infty_k}L_v(s,\rho_K)$ is the Artin $L$-function associated with $\rho_K$.

\begin{definition}\label{def:tamagawameasure}
\begin{enumerate}
    \item For a connected reductive group $K$, let $\omega_K$ be an invariant gauge form on $K$.
The Tamagawa measure $m^K$ on $K(\mathbb{A}_k)$ is defined as follows
\[
\left\{
\begin{array}{l}
     m_{fin}^K=r_K^{-1}|\Delta_k|^{-\frac{1}{2}\dim K}\prod_{v<\infty_k}(\lambda_v^K)^{-1}|\omega_K|_v; \\
     m_\infty^K=\prod_{v\in \infty_k}|\omega_K|_v;\\
     m^K=m_\infty^K m_{fin}^K.
\end{array}
\right.
\]
\item
Let $\omega_X$ be a $G$-invariant gauge form on $X\cong H\backslash G$.
The Tamagawa measure $m^X$ on $X(\mathbb{A}_k)$ is defined as follows
\[
\left\{
\begin{array}{l}
     m_{fin}^X=r_X^{-1}|\Delta_k|^{-\frac{1}{2}\dim X}\prod_{v<\infty_k}(\lambda_v^X)^{-1}|\omega_X|_v; \\
     m_\infty^X=\prod_{v\in \infty_k}|\omega_X|_v;\\
     m^X=m_\infty^X m_{fin}^X.
\end{array}
\right.
\]
where $r_X=r_G/r_H$ and $\lambda_v^X=\lambda_v^G/\lambda_v^H$.
\end{enumerate}
\end{definition}
We fix an invariant gauge form $\omega_G$ on $G$, an invariant gauge form $\omega_H$ on $H$, and a $G$-invariant gauge form $\omega_X$ on $X$ such that $\omega_G=\omega_X\cdot \omega_H$.
Then the local measures $m^G_v$ on $G(k_v)$, $m^H_v$ on $H(k_v)$, and $m^X_v$ on $X(k_v)$ match together topologically.

\section{Refined Hardy-Littlewood expectation over a number field $k$}\label{sec:refined_HL_expectation}
In this section, we recall the result of \cite{WX} on the generalization of the Hardy-Littlewood expectation for $ N(X;f_{\mathbf{X},T})$ to a number field.
By using strong approximation with Brauer-Manin obstruction, Wei and Xu proved that $ N(X;f_{\mathbf{X},T})$ is asymptotically equivalent to a sum of Euler products, indexed by the finite group $\Br X/ \Br k$ (see Remark \ref{rem:normalized_eval}.(2)).
We present a slightly modified version of this result in Theorem \ref{thm:wx}, which is more suitable for comparison with Arthur's Trace formula in Section \ref{sec:arthur's_trace_formula}.
\subsection{Brauer group and evaluation}
Firstly, we introduce some notions related to the Brauer-Manin obstruction.
We mainly follow \cite[Section 8]{Poonen} for the definitions of the Brauer group and the Brauer evaluation.

\begin{definition}\label{def:brauer}{\cite[Definition 6.6.4]{Poonen}}
    For any scheme $X$ over $k$, the Brauer group is defined by
    \[\Br X = \mathrm{H}_{\acute{e}t}^2(X, \mathbb{G}_{m}),\]
    where $\mathbb{G}_{m}$ stands for the \'etale sheaf associated with the multiplicative group over $X$ (see \cite[Proposition 6.3.19]{Poonen}).
    For an affine scheme $X=\Spec A$, we denote $\Br (\Spec A)$ by $\Br A$ to ease the notation.
\end{definition}
Note that $ \mathrm{H}_{\acute{e}t}^2(-, \mathbb{G}_{m})$ gives a contravariant functor from $\mathrm{Sch}_k$ to $\mathrm{Ab}$.
Via this fact, each element of $\Br X$ induces an evaluation on the set of $A$-points $X(A)$, for any $k$-algebra $A$.
\begin{definition}\label{def:brauer_evaluation}
Let $X$ be a $k$-scheme and $A$ be a $k$-algebra.    
    \begin{enumerate}
        \item For any $x \in X(A)$ and $\xi \in \Br X$, we define the Brauer evaluation $\xi(x)$ to be the image of $\xi$ under the morphism $\mathrm{Br}(x):\Br X\rightarrow \Br A$.

        \item In particular, for a local field $k_v$ where $v\in \Omega_k$, \cite[Theorem 1.5.34]{Poonen} yields that the invariant map $\inv_v : \Br k_v \to \Q/\Z$ is injective.
        In this case,
        for any $x \in X(k_v)$ and $\xi \in \Br X$, we define the Brauer evaluation $\xi_v(x)$ to be the image of $\xi$ under the following composite map
        \[\Br X \xrightarrow{\mathrm{Br} (x)} \Br k_v \xhookrightarrow{\inv_v} \Q / \Z.  
        \]
    \end{enumerate}
\end{definition}
\begin{remark}\label{rem:normalized_eval}
    For a $k$-scheme $X$, the structure morphism induces a map $\Br k \to \Br X$ via the contravariant functor $\mathrm{Br}(-):=\mathrm{H}_{\acute{e}t}^2(-, \mathbb{G}_{m})$.
    We denote the cokernel of this map by $\Br X /\Br k$. 
    \begin{enumerate}
         \item
    For $\xi \in im(\Br k\rightarrow \Br X)$ , the evaluation $\xi_v$ is constant on $X(k_v)$ and the summation of these constant values over all $v\in \Omega_k$ is trivial.
    Indeed, for a fixed embedding $k \hookrightarrow k_v$, the composition $\Br k\rightarrow \Br X \xrightarrow{\mathrm{Br}(x)} \Br k_v$ is independent of $x \in X(k_v)$.
    Then it follows from Definition \ref{def:brauer_evaluation}.(2) that $\xi_v(x)$ is constant on $X(k_v)$, and hence \cite[Proposition 8.2.2]{Poonen} yields that $\sum_{v\in \Omega_k}\xi_v\equiv 0$.
        \item 
    If $X\cong H\backslash G$ where $G$ is a simply connected semisimple group over $k$ and $H\subset G$ is a connected subgroup over $k$, then by Proposition 2.2 and Proposition 2.10 in \cite{CtX} one has an isomorphism \[\Br X/\Br k\cong\Pic H.\] Here, the Picard group $\Pic H$ is a finite abelian group.
    \end{enumerate}
\end{remark}

\begin{definition}
   For $\xi \in \Br X$, we define
   \[
   X(\mathbb{A}_k)^\xi:=\{(x_v)_{v\in\Omega_k}\in X(\mathbb{A}_k)\mid \sum_{v\in\Omega_k}\xi_v(x_v)=0 \}.
   \]
   Also we define
   \[
   X(\mathbb{A}_k)^{\Br}:= \bigcap_{\xi\in\Br X}X(\mathbb{A}_k)^\xi.
   \]
\end{definition}
\noindent By Remark \ref{rem:normalized_eval}.(1), one can deduce that  $X(k)\subset X(\mathbb{A}_k)^\mathrm{Br}$ (see {\cite[Corollary 8.2.6]{Poonen}}).

In the remainder of this section, we set $X\cong H\backslash G$ where $G$ is a simply connected semisimple group and $H$ is a connected reductive subgroup of $G$ over $k$ without non-trivial $k$-character.
By \cite[Theorem 3.1]{CtX}, we have an exact sequence of pointed sets
\[
H^1(k,H)\rightarrow \bigoplus_{v\in \Omega_k}H^1(k_v,H)\rightarrow \mathrm{Hom}(\Pic H,\mathbb{Q}/\mathbb{Z}).
\]
Here, the second morphism is induced by the pairing $H^1(k_v,H)\times \Pic H_{k_v} \rightarrow \Br k_v$ in \cite[(2.4)]{CtX}, which is compatible with the Brauer evaluation on $X(k_v)$.
This exact sequence and \cite[Proposition 2.9]{CtX} yield the commutative diagram (3.1) in loc. cit.:
\begin{equation}\label{eq:diagram_for_H1}
\begin{tikzcd}
G(k) \arrow[r] \arrow[d] & G(\mathbb{A}_k) \arrow[d] \\
X(k) \arrow[r] \arrow[d] & X(\mathbb{A}_k) \arrow[r] \arrow[d] & \mathrm{Hom}(\mathrm{Br}\, X / \mathrm{Br}\, k, \mathbb{Q}/\mathbb{Z}) \arrow[d] \\
H^1(k, H) \arrow[r] \arrow[d] & \bigoplus_{v\in \Omega_k} H^1(k_v, H) \arrow[r] \arrow[d] & \mathrm{Hom}(\mathrm{Pic}\, H, \mathbb{Q}/\mathbb{Z}) \\
H^1(k, G) \arrow[r] & \bigoplus_{v\in\Omega_k} H^1(k_v, G).
\end{tikzcd}
\end{equation}
In this diagram, the two vertical sequences are from the long exact sequences (\ref{eq:k-long_exact_sequence}) and (\ref{eq:Ak-long_exact_sequence}), respectively.
The map $X(\mathbb{A}_k)\rightarrow \mathrm{Hom}(\Br X/\Br k,\mathbb{Q}/\mathbb{Z})$ is induced by the Brauer evaluation, while
the map $\Hom(\Br X/\Br k,\mathbb{Q}/\mathbb{Z})\rightarrow \Hom(\Pic H,\mathbb{Q}/\mathbb{Z})$ comes from the isomorphism $\Pic H\cong \Br X/\Br k$ given in \cite[Proposition 2.10]{CtX}.


\subsection{Result of \cite{WX}}
We continue to assume that $X\cong H\backslash G $ where $G$ is a simply connected semisimple group and $H$ is a connected reductive subgroup of $G$ over $k$ without non-trivial character.
To state the main result of \cite{WX}, we introduce the following equidistribution property for a congruence subgroup $\Gamma \subset G(k)$ and any point $x\in \mathbf{X}(\mathcal{O}_k)$:
\begin{equation}\label{eq:equidistribution_property}
|\{y\in x\cdot \Gamma\mid \norm{y}_{\infty}\leq T \}|\sim
\frac{m^{H_x}_{\infty}(\Gamma\cap H_x(k)\backslash H_x(k_\infty) )}{m^G_{\infty}(\Gamma \backslash G(k_\infty))}
m_{\infty}^X(x\cdot G(k_\infty)\cap X(k_\infty,T)),
\end{equation} where $H_x$ is the stabilizer of $x$ in $G$. 
Here, $m_\infty^G$, $m_\infty^{H_x}$, and $m_\infty^X$ denote the infinite components of Tamagawa measures on $G(\mathbb{A}_k)$, $H(\mathbb{A}_k)$, and $X(\mathbb{A}_k)$, respectively. 
By \cite{BorHar}, the measures $m^G_\infty$ and $m^{H_x}_\infty$ induce the well-defined finite Haar measures on $\Gamma\cap H_x(k)\backslash H_x(k_\infty) $ and $\Gamma \backslash G(k_\infty)$, respectively.
\begin{remark}\label{rmk:equidistribution_listup}
    The equidistribution property (\ref{eq:equidistribution_property}) is verified for the following $G$-homogeneous space $X\cong H\backslash G$:
    \begin{enumerate}
        \item $G$ is almost $k$-simple, $G(k_\infty)$ is not compact, and $X$ is symmetric.
        This case was proven by \cite{DRS} and \cite{EM} over $\mathbb{Q}$, and extended to a general number field $k$ by \cite{BO}.
        \item 
        $H$ is not contained in any proper $\mathbb{Q}$-parabolic subgroup of $G$, and the sequence $vol(m^X_\infty, x\cdot G(\mathbb{R})\cap X(\mathbb{R},T_n))$, for $x \in \mathbf{X}(\mathbb{Z})$, is not focused for some sequence $T_n\rightarrow \infty$ (see \cite[Definition 1.14]{EMS}). 
        Here, $X(\mathbb{R},T_n):=\{x\in X(\mathbb{R})\mid \norm{x}_\infty \leq T_n\}$.
        This case was proven by \cite{EMS}.
    \end{enumerate}
    In the case that $X\cong G_\gamma \backslash G$ where $\gamma$ is an elliptic regular element of $G(k)$, as in Section \ref{sec:intro:mainresult}, we consider the restrictions of scalars
    \[
    X':=\mathrm{Res}_{k/\mathbb{Q}}X\text{ and } G':=\mathrm{Res}_{k/\mathbb{Q}}G.
    \]
    Then, by \cite[Corollary A.5.4]{CGP15} and \cite[Proposition 17.2.2]{GH19}, we have $X'\cong H'\backslash G'$ where $H':=\mathrm{Res}_{k/\Q}G_\gamma$.
    Since $\mathrm{Res}_{k/\Q}G_\gamma$ is anisotropic over $\mathbb{Q}$, 
    to show that $X$ satisfies the equidistribution property, it suffices to verify the nonfocusing of the sets $x\cdot G(k_\infty)\cap X(k_\infty,T)$ in the sense of \cite[Definition 1.14]{EMS}.
\end{remark}

Recall that a separated scheme $\mathbf{X}$ of finite type over $\mathcal{O}_k$ such that $\mathbf{X}\otimes_{\mathcal{O}_k}k=X$ is called a model of $X$. The main result of \cite{WX} is as follows.
\begin{theorem}[{\cite[Theorem 4.3]{WX}}]\label{thm:wx}
    Suppose that $X\cong H\backslash G$ where $G$ is semisimple and simply connected, and $H$ is connected reductive subgroup of $G$ over $k$ without non-trivial $k$-character. 
    For a model $\mathbf{X}$ of $X$ over $\mathcal{O}_{k}$ and $T>0$,
    let $f_{\mathbf{X},T}$ be the function on $X(\mathbb{A}_k)$ defined in (\ref{def:alg_ftn}).
    If $G'(k_\infty)$ is not compact for any non-trivial simple factor $G'$ of $G$ and $X$ satisfies the euqidistribution property (\ref{eq:equidistribution_property}), 
    then we have 
    \[
     N(X;f_{\mathbf{X},T})\sim |\Br X/\Br k| \int_{{X(\mathbb{A}_k)}^{\mathrm{Br}}}f_{\mathbf{X},T} \ m^X.
    \]
\end{theorem}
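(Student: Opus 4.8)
The plan is to reconstruct the argument of Wei--Xu \cite{WX}, which itself refines the method of Borovoi--Rudnick \cite{BR} (valid when $\Pic H=0$) using strong approximation with Brauer--Manin obstruction as developed in \cite{CtX} and \cite{BD}; the equidistribution input (\ref{eq:equidistribution_property}) is taken as a hypothesis, so the real work is the orbit bookkeeping and the passage from lattice counts to adelic volumes. The overall strategy is to decompose the counting problem $N(X;f_{\mathbf{X},T})$ along $G(\mathbb{A}_k)$-orbits in $X(\mathbb{A}_k)$, count inside each orbit via (\ref{eq:equidistribution_property}), and then reassemble the sum as a Tamagawa integral over the union of those orbits that meet $X(k)$, finally identifying that union with $X(\mathbb{A}_k)^{\mathrm{Br}}$.

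First I would make the orbit decomposition precise. Since $\mathbf{X}(\mathcal{O}_k)=X(k)\cap\big(X(k_\infty)\times\prod_{v<\infty_k}\mathbf{X}(\mathcal{O}_{k_v})\big)$, the set of integral points of norm $\le T$ is a finite disjoint union, over the $G(k)$-orbits in $X(k)$ meeting $\prod_{v<\infty_k}\mathbf{X}(\mathcal{O}_{k_v})$, of sets of the form $\{y\in x\cdot\Gamma:\norm{y}_\infty\le T\}$, where $\Gamma\subset G(k)$ is a congruence subgroup determined by $\mathbf{X}$ and the stabilizer $H_x$. Here I would use that $H^1(k_v,G)$ vanishes at all finite $v$ (Kneser) together with the Hasse principle for the simply connected $G$ (\cite{Kal}) to control, via the sequences (\ref{eq:k-long_exact_sequence}), (\ref{eq:Ak-long_exact_sequence}) and the map $\alpha$ of Remark \ref{rmk:ker_of_alpha}, exactly which $G(\mathbb{A}_k)$-orbits carry integral points and how many $G(k)$-orbits sit above each one; finiteness of the set of $\prod_{v<\infty_k}\mathbf{X}(\mathcal{O}_{k_v})$-orbits guarantees that only finitely many orbits contribute for a fixed model $\mathbf{X}$.

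Next, for each contributing orbit I would apply (\ref{eq:equidistribution_property}) to get
$|\{y\in x\cdot\Gamma:\norm{y}_\infty\le T\}|\sim \frac{m^{H_x}_\infty(\Gamma\cap H_x(k)\backslash H_x(k_\infty))}{m^G_\infty(\Gamma\backslash G(k_\infty))}\,m^X_\infty\big(x\cdot G(k_\infty)\cap X(k_\infty,T)\big)$, and then sum over orbits. Converting the ratio of finite Haar volumes into a product of local measures $|\omega_H|_v$, $|\omega_G|_v$ at the finite places (topological matching, Proposition \ref{prop:alg-top_match}) and folding in the normalizing factors $r_\bullet,\lambda_v^\bullet$ of Definition \ref{def:tamagawameasure}, the total is recognized as $\int_Y f_{\mathbf{X},T}\,m^X$, where $Y\subset X(\mathbb{A}_k)$ is the union of the $G(\mathbb{A}_k)$-orbits containing a $k$-point; convergence of the resulting Euler product of local densities is exactly convergence of $\prod_v\lambda_v^X$. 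Finally I would identify $Y$ with $X(\mathbb{A}_k)^{\mathrm{Br}}$: by the diagram (\ref{eq:diagram_for_H1}) and the Colliot-Thélène--Xu exact sequence, an adelic orbit lies in $X(\mathbb{A}_k)^{\mathrm{Br}}$ iff its invariant in $\bigoplus_v H^1(k_v,H)$ pairs trivially with $\Pic H\cong\Br X/\Br k$, hence iff it comes from a global class in $H^1(k,H)$; combined once more with the Hasse principle for $G$ this class lies in $\ker_k(X)$, so the orbit meets $X(k)$, giving $Y=X(\mathbb{A}_k)^{\mathrm{Br}}$ as sets. The scalar $|\Br X/\Br k|$ then enters through the comparison, at the level of orbits, between the Tamagawa mass of each adelic orbit in $X(\mathbb{A}_k)^{\mathrm{Br}}$ and the number of rational orbits above it weighted by the stabilizer volumes --- the same discrepancy that already makes the naive Hardy--Littlewood prediction fail when $H$ is not simply connected in \cite[Theorem 0.6]{BR}.

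I expect the main obstacle to be precisely this last piece: showing that the constant is exactly $|\Br X/\Br k|=|\Pic H|$, rather than, say, the number of adelic orbits meeting $\prod_{v<\infty_k}\mathbf{X}(\mathcal{O}_{k_v})$ or a quotient of that by a Tamagawa number. This requires the precise compatibility between the Brauer evaluations on $X(\mathbb{A}_k)$ and the local pairings $H^1(k_v,H)\times\Pic H_{k_v}\to\Br k_v$ used in (\ref{eq:diagram_for_H1}), applied to the compact open $\prod_{v<\infty_k}\mathbf{X}(\mathcal{O}_{k_v})$ via strong approximation with Brauer--Manin obstruction \cite{CtX}, \cite{BD}, together with a uniform (or dominated) control of the error term in (\ref{eq:equidistribution_property}) across the finitely many contributing orbits so that the limit $T\to\infty$ can be taken term by term.
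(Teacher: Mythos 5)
You should first be aware that the paper does not prove this statement at all: it is quoted (in lightly modified form) from \cite[Theorem 4.3]{WX}, so there is no internal proof to compare against, and your proposal has to be judged as a reconstruction of the Wei--Xu argument. Your skeleton is the right one: decompose the count along orbits, apply the equidistribution hypothesis (\ref{eq:equidistribution_property}) orbit by orbit, reassemble into a Tamagawa integral, and identify the union $Y$ of $G(\mathbb{A}_k)$-orbits meeting $X(k)$ with $X(\mathbb{A}_k)^{\mathrm{Br}}$ via the Colliot-Th\'el\`ene--Xu exact sequence and the Hasse principle for the simply connected $G$; that last identification is exactly the argument this paper itself runs later, inside the proof of Theorem \ref{thm:main}.

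The genuine gap is the constant $|\Br X/\Br k|$, and as written your account of it is internally inconsistent: you first assert that the sum of equidistribution terms ``is recognized as $\int_Y f_{\mathbf{X},T}\,m^X$'' and only afterwards say that the scalar $|\Br X/\Br k|$ ``enters through'' a comparison of orbit masses --- both cannot hold, and the second is never carried out (you explicitly defer it as the expected obstacle). This is not a final calibration but the heart of the theorem: one must show that above each $G(\mathbb{A}_k)$-orbit in $Y$ the weighted number of classes of integral points --- the $G(k)$-orbits counted by $\ker\alpha$ of Remark \ref{rmk:ker_of_alpha}, each further split into finitely many $\Gamma$-orbits according to the class decomposition of $H_x(\mathbb{A}_k)$ relative to the chosen compact open subgroup, each weighted by the stabilizer volume ratio appearing in (\ref{eq:equidistribution_property}) --- equals exactly $|\Br X/\Br k|$, \emph{uniformly} in the orbit and independently of the model $\mathbf{X}$, using $\tau(G)=1$, the Tamagawa numbers of the stabilizers, and the duality $(\Br X/\Br k)^*\cong(\pi_1(H)_{\Gamma})_{tors}$ of Remark \ref{rmk:identification_of_cohomology}. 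Without that computation the asserted constant could a priori vary from orbit to orbit, and nothing in your sketch rules this out. A secondary imprecision feeds into the same point: your opening decomposition conflates $G(k)$-orbits with $\Gamma$-orbits, whereas a single $G(k)$-orbit meeting $\prod_{v<\infty_k}\mathbf{X}(\mathcal{O}_{k_v})$ generally contains several $\Gamma$-orbits of integral points (finitely many, by Borel--Harish-Chandra), and it is precisely this multiplicity, combined with $|\ker\alpha|$, that produces the factor you are missing.
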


\begin{remark}\label{rmk:identification_of_cohomology}
By \cite[Proposition 2.10.(ii)]{CtX}, we have
\[
\mathrm{Pic}\ H\cong \Br X/\Br k.\]
On the other hand, the finite abelian groups $\Pic H$ and $(\pi_1(H)_{\Gamma})_{tors}$ are dual to each other by \cite[Proposition 6.3]{Col}, where $\pi_1(H)$ is the algebraic fundamental group of $H$ (see \cite[Section 3.3]{BR}).
Hence, we have an isomorphism $(\Br X /\Br k)^*\cong (\pi_1(H)_{\Gamma})_{tors}$

Suppose further that $H$ is a maximal torus in $G$. 
In this case, we compare $\Br X / \Br k$ with the corresponding objects introduced in Section \ref{sec:arthur's_trace_formula}.
By \cite[Section 3.4]{BR}, we can identify $(\pi_1(H)_{\Gamma})_{tors}$ with $\pi_0(\hat{H}^{\Gamma})^*$ where $\hat{H}$ denotes the connected Langlands dual group of $H$. 
Moreover, combining the isomorphism $\pi_0(\hat{H}^{\Gamma})\cong H^1(k, X_*(\hat{H}))$ from \cite[Section 2.1]{Kot86} with the Tate-Nakayama duality $H^1(k, H(\mathbb{A}_{\bar{k}})/H(\bar{k}))\cong H^1(k, X^*(H))^*$, we have
\[
\pi_0(\hat{H}^{\Gamma})^*\cong H^1(k,H(\mathbb{A}_{\bar{k}})/H(\bar{k})).
\]
It then follows that $\Br X/\ \Br k$ and $H^1(k, H(\mathbb{A}_{\bar{k}})/H(\bar{k}))$ are dual to each other.
\end{remark}

\begin{remark}\label{rmk:comparision_bet_Q_k}
    In the case that $k=\mathbb{Q}$, Theorem \ref{thm:wx} is compatible with the result of \cite[Theorem 5.3]{BR}.
Indeed, we have $|(\pi_1(H)_\Gamma)_{\mathrm{tors}}|=|(\Br X/\Br \mathbb{Q})|$ by Remark \ref{rmk:identification_of_cohomology}, and the support of the density function $\delta$ in \cite[Theorem 5.3]{BR} coincides with $X(\mathbb{A}_\mathbb{Q})^{\mathrm{Br}}$.
In fact, by the proof of \cite[Theorem 3.6]{BR}, the support of the density function $\delta$ consists of the points $x\in X(\mathbb{A}_\mathbb{Q})$ such that $inv(x,x_0)$ lies in the image of the following map in the diagram (\ref{eq:diagram_for_H1})
\[
H^1(\mathbb{Q},H)\longrightarrow \bigoplus_{v\in\Omega_\mathbb{Q}}H^1(\mathbb{Q}_v,H).
\]
We will show that the set of these points is exactly $X(\mathbb{A}_\mathbb{Q})^{\mathrm{Br}}$ in the proof of Theorem \ref{thm:main}.
    
\end{remark}

\section{Arthur's Trace formula}\label{sec:arthur's_trace_formula}
In this section, we introduce Arthur's trace formula and briefly review the stabilization of the elliptic regular part of its geometric side.
Throughout this section, we assume that $G$ is a simply connected semisimple group.
For the locally compact  group $G(\mathbb{A}_k)$, the group $G(k)$ embeds as a discrete subgroup of $G(\mathbb{A}_k)$.
We denote by $[G]$ the adelic quotient $G(k)\backslash G(\mathbb{A}_k)$ of $G$, and we consider the Hilbert space $L^2([G])$ which is defined in \cite[Section 3.7]{GH19}.
For $f\in \mathcal{C}_c^\infty(G(\mathbb{A}_k))$, Arthur's trace formula is the identity
\begin{equation}\label{eq:Arthurs_trace_formula}
\sum_{\chi\in \mathcal{X}}J_\chi(f)=\sum_{\mfo \in \mathcal{O}}J_\mfo(f),
\end{equation}
where $\mathcal{X}$ is the set of conjugacy classes of pairs $(M,\sigma)$, with $M$ a Levi subgroup of $G$ and $\sigma$ a nonzero cuspidal automorphic representation of $M(\mathbb{A}_k)$ in the $L^2$-sense (see \cite[Definition 6.10]{GH19}), while $\mathcal{O}$ is the set of semisimple conjugacy classes in $G(k)$.
In the trace formula (\ref{eq:Arthurs_trace_formula}), the left-hand side is called the spectral side while the right-hand side is called the geometric side.
For a detailed explanation of the terms $J_\chi(f)$ and $J_{\mfo}(f)$, we refer to \cite[Sections 10, 14, 19, and 21]{Art}.

In particular, let $\mathcal{X}_{\mathrm{cusp}}$ denote the set of equivalence classes of cuspidal automorphic representations of $G(\mathbb{A}_k)$ in the $L^2$-sense.  
Let $\mathcal{O}_{\mathrm{ani}}$ denote the set of anisotropic semisimple conjugacy classes in $G(k)$, where a class is called anisotropic if it does not intersect $P(k)$ for any proper parabolic subgroup $P \subset G$.
Then we have
\begin{equation}\label{eq:specail_part_of_trace_formula}
\left\{
\begin{array}{l l}
     J_\chi(f)=m_{cusp}(\pi)\mathrm{tr}\ \pi(f) &\text{for }\chi=(G,\pi)\in\mathcal{X}_{cusp};\\
J_\mfo(f)=\tau(G_\gamma)\mathcal{O}_{\gamma}(f) &\text{for }\mfo\in\mathcal{O}_{ani}\text{ and }\gamma \in \mfo.
\end{array}
\right.
\end{equation}
The explanation of these terms is as follows:
\begin{itemize}
    \item $m_{cusp}(\pi)$ is the multiplicity of $\pi$ in the cuspidal subspace $L^2_{cusp}([G])$ of $L^2([G])$ (see \cite[(6.14)]{GH19})
    and $\pi(f)$ is the representation of $G(\mathbb{A}_k)$, induced from the representation $\pi=(\pi,V_\pi)$, defined by
    \[
    \pi(f)v= \int_{G(\mathbb{A}_k)}f(x)\pi(x)v\ m^G(x) ~~~~\text{ for }v\in V_\pi.
    \]
    \item $\tau(G_\gamma):=vol(m^{G_\gamma}, G_\gamma(k)\backslash G_\gamma(\mathbb{A}_k))$ is the Tamagawa number of $G_\gamma$. 
    We note that a semisimple element $\gamma \in G(k)$ represents an anisotropic class if and only if $G_\gamma$ is anisotropic.
    In this case, the quotient $G_\gamma(k)\backslash G_\gamma(\mathbb{A}_k)$ is compact and thus $\tau(G_\gamma)$ is well-defined.
    \item $\mathcal{O}_\gamma(f)$ is the orbital integral of $G$ for $f$ at $\gamma$, defined by
    \begin{equation}\label{eq:orbital_integral}
    \mathcal{O}_\gamma(f)=\int_{G_\gamma(\mathbb{A}_k)\backslash G(\mathbb{A}_k)}f(x^{-1}\gamma x)\frac{m^G}{m^{G_\gamma}}.
    \end{equation}
\end{itemize}
If $G(k)\backslash G(\mathbb{A}_k)$ is compact, equivalently if $G$ is anisotropic, then $\mathcal{X}=\mathcal{X}_{cusp}$ and $\mathcal{O}=\mathcal{O}_{ani}$.
In this case, (\ref{eq:Arthurs_trace_formula}) reduces to Selberg's trace formula for compact quotients.

In various applications of the trace formula, the test function $f$ is not given directly but rather indirectly through its orbital integral or stable orbital integral.
Thus the stabilization of the trace formula - roughly, a modification which makes the trace formula invariant under stable conjugacy over $\mathbb{A}_k$ (see Definition \ref{def:stable_conjugacy}) - is required.
\begin{definition}\label{def:stable_conjugacy}
Let $F=k,k_v,$ or $\mathbb{A}_k$, and let $\bar{F}$ the corresponding $\bar{k},\bar{k}_v$ or $\mathbb{A}_{\bar{k}}$, respectively.
\begin{enumerate}
    \item  Two semisimple elements $\gamma,\gamma'\in G(F)$ are called stably conjugate if there exists $g\in G(\bar{F})$ 
    such that $g^{-1}\gamma g=\gamma'$.
In order to distinguish between the usual notion of conjugacy under $G(F)$ and stably conjugacy, we say two semisimple elements of $G(F)$ are rationally conjugacy if they are conjugate in $G(F)$.
\item For $F=k_v$ or $\mathbb{A}_k$, the stable orbital integral for $f\in \mathcal{C}_c^\infty(G(F))$ at $\gamma\in G(F)$ is defined as follows
\[
\mathcal{SO}_\gamma(f)=\sum_{\substack{\gamma'\in G(F)/\sim \\ \gamma' \sim_{st} \gamma}}\mathcal{O}_{\gamma'}(f),
\]where $\sim$ denotes the rational conjugacy relation and $\sim_{st}$ denotes the stable conjugacy relation.
Here, the local orbital integral $\mathcal{O}_{\gamma'}(f)$ for $f\in \mathcal{C}_c^\infty(G(k_v))$ at $\gamma'\in G(k_v)$ is defined to be \[\mathcal{O}_{\gamma'}(f)=\int_{G_\gamma(k_v)\backslash G(k_v)}f(x^{-1}\gamma x)\frac{m^G_v}{m_v^{G_\gamma}}.
\]
\end{enumerate}
\end{definition}\noindent

A semisimple element $\gamma \in G(k)$ is said to be elliptic if it is contained in a maximal torus $T\subset G$ which is elliptic over $k$, meaning that $T$ is anisotropic modulo the center of $G$.
In our setting, where $G$ is semisimple, a maximal torus is elliptic over $k$ if and only if it is anisotropic over $k$.

For a $G$-homogeneous space $X\cong G_\gamma\backslash G$, where $\gamma \in G(k)$ is elliptic regular, we establish in Section \ref{sec:main_result} a relation between the asymptotic behavior of $ N(X;f_{\mathbf{X},T})$ and the contribution of the geometric side coming from elements $\gamma'\in G(k)$ stably conjugate to $\gamma$.
To show that, we use some technical tools that arise in the stabilization of the trace formula.
Therefore, in the remainder of this section, we recall the stabilization of the elliptic regular part of the trace formula.

\begin{remark}\label{rmk:conjugacy_and_pointsofX}
For a semisimple element $\gamma \in G(k)$, let $X$ be a $G$-homogeneous space over $k$ isomorphic to $G_\gamma \backslash G$.
In this case, we may take the map $\varphi_G$ in (\ref{eq:generalGXmap}) to be
\[\varphi_G:G \rightarrow X\cong G_\gamma\backslash G, \ g\mapsto g^{-1}\gamma g,\] thereby identifying $X$ with the conjugacy class of $\gamma$ in $G$ defined over $k$, which is known to be a closed subscheme of $G$ over $k$ (see \cite[Corollary 6.13]{St2} and \cite[Example 17.2]{GH19}).
Then, for $F=k,k_v,$ and $\mathbb{A}_k$, the set of $\bar{F}$-points $X(\bar{F})$ is identified with the $G(\bar{F})$-conjugacy class of $\gamma$ in $G(\bar{F})$.
Accordingly, $X(F)$ corresponds to the stable conjugacy class of $\gamma$ in $G(F)$, and each $G(F)$-orbit in $X(F)$ corresponds to a rational conjugacy class in this stable class.
By Section \ref{subsec:orbit}, the set of rational conjugacy classes in the stable class of $\gamma$ over $F$, which is in bijection with $X(F)/G(F)$, is parametrized by $\ker_F$.
\end{remark}

\subsection{Stabilization of the elliptic regular part of the trace formula}\label{sec:stab_ell.reg}
For the stabilization of the elliptic regular part,
we follow the approach of \cite[Section 5]{Kal}.
More specifically, \cite[Section 5]{Kal} deals with the elliptic strongly regular part, where a semisimple element is said to be strongly regular if its centralizer is a maximal torus.
In our setting, the assumption that $G$ is semisimple and simply connected implies that the centralizer of any semisimple element is connected (see \cite[Theorem 8.1]{St}).
Therefore, every elliptic regular element is strongly regular.

Since the centralizer of an elliptic regular element $\gamma$ is a maximal torus which is anisotropic over $k$, the conjugacy class of $\gamma$ belongs to $\mathcal{O}_{ani}$. 
Then the elliptic regular part of the trace formula is given by
\begin{equation}\label{eq:ell.reg.tf}
\mathrm{TF}_{ell.reg}(f)=\sum_{\gamma\in G(k)_{ell.reg}/\sim}\tau(G_\gamma)\mathcal{O}_\gamma(f),
\end{equation}
where  $G(k)_{ell.reg}/\sim$ denotes the set of conjugacy classes of elliptic regular elements in $G(k)$. 

To simplify the notation, we denote the rational conjugate relation over $F$ by $\sim_F$ and the stable conjugate relation over $F$ by $\sim_{st,F}$ for $F=k,k_v$, and $\mathbb{A}_k$. 
When the base field is clear from the context, we sometimes abbreviate the notations to $\sim$ and $\sim_{st}$, respectively.







\subsubsection{Pre-stabilization}\label{sec:pre_stabilization}
In this subsection, we rewrite the right-hand side of (\ref{eq:ell.reg.tf}) in terms of $\kappa$-orbital integrals, to be transformed into stable orbital integrals via the endoscopic transfer described in Section \ref{sec:endoscopic_transfer}.

If two regular semisimple elements $\gamma$ and $\gamma'$ of $G(k)$ are stably conjugate, then their centralizers $G_\gamma$ and $G_{\gamma'}$ are isomorphic over $k$. 
Thus we get
\begin{equation}\label{eq:ell.reg.part}
\mathrm{TF}_{ell.reg}(f)=\sum_{\gamma\in G(k)_{ell.reg}/\sim_{st}}\tau(G_{\gamma})\sum_{\substack{\gamma'\in G(k)/\sim\\ \gamma'\sim_{st}\gamma}}\mathcal{O}_{\gamma'}(f).
\end{equation}
From Remark \ref{rmk:ker_of_alpha} and Remark \ref{rmk:conjugacy_and_pointsofX}, the kernel of the map $\alpha:\ker_k\rightarrow \ker_\mathbb{A}$ for $\gamma\in G(k)_{ell.reg}$ parametrizes distinct $G(k)$-classes inside of a $G(\bar{k})$-class of $\gamma$ which lies in a single $G(\mathbb{A}_k)$-class inside of a $G(\mathbb{A}_{\bar{k}})$-class of $\gamma$.
We then have
\begin{equation}\label{eq:pre_stablization_1}
\mathrm{TF}_{ell.reg}(f)=\sum_{\gamma\in G(k)_{ell.reg}/\sim_{st}}\tau(G_{\gamma}) |\ker(\alpha)|\sum_{a\in im(\alpha)}\mathcal{O}_{\gamma_a}(f),
\end{equation}
where $\gamma_a \in G(\mathbb{A}_k)$ represents the $G(\mathbb{A}_k)$-class contained in the $G(\mathbb{A}_{\bar{k}})$-class of $\gamma$ and satisfying $inv(\gamma_a,\gamma)=a$.
On the other hand, we have the following exact sequence of pointed sets in \cite[Exercise 5.4]{Kal}
\begin{equation}\label{eq:mainthm_ses}
\ker_k\xrightarrow{\alpha} \ker_\mathbb{A}\rightarrow H^1(k,G_{\gamma}(\mathbb{A}_{\bar{k}})/G_{\gamma}(\bar{k})),
\end{equation}
for $\gamma\in G(k)_{ell.reg}/\sim_{st}$ since a semisimple and simply connected group $G$ satisfies the Hasse principle.
To ease the notation, we denote   $H^1(k,G_\gamma(\mathbb{A}_{\bar{k}})/G_\gamma(\bar{k}))$ by $\mathfrak{K}(G_\gamma/k)$.

By the finiteness of $\mathfrak{K}(G_{\gamma}/k)$ (see Remark \ref{rmk:identification_of_cohomology}), a function $|\mathfrak{K}(G_\gamma/k)|^{-1}\sum\limits_{\kappa\in \mathfrak{K}(G_\gamma/k)^*}\kappa$ on $\mathfrak{K}(G_{\gamma}/k)$ is the characteristic function of the identity element. 
Pulling this back to $\ker_\mathbb{A}$, we obtain the characteristic function of $im (\alpha)$.
Then by the fact that
$\tau(G_{\gamma}) |\ker(\alpha)|\cdot |\mathfrak{K}(G_{\gamma}/k)|^{-1}=\tau(G)=1$ (see \cite[(5.1.1)]{Kot84}), we have
\begin{equation}\label{eq:pre_stabilization}
    \mathrm{TF}_{ell.reg}(f)=\sum_{\gamma\in G(k)_{ell.reg}/\sim_{st}} ~~
    \sum_{\kappa\in \mathfrak{K}(G_{\gamma}/k)^*}\mathcal{O}^{\kappa}_{\gamma}(f)
\end{equation}
where $\mathcal{O}^{\kappa}_{\gamma}(f)$, the $\kappa$-orbital integral for $f$ at $\gamma$, is defined by
\begin{equation}\label{eq:kappa_orbital_integral}
\mathcal{O}_{\gamma}^\kappa(f)=\sum_{\substack{\gamma'\in G(\mathbb{A}_k)/\sim\\ \gamma'\sim_{st}\gamma}}\kappa(inv(\gamma',\gamma))\mathcal{O}_{\gamma'}(f).
\end{equation}
The formula (\ref{eq:pre_stabilization}) is called the pre-stabilization of the elliptic regular part of the trace formula. 
When $\kappa=1$, one can easily check that the $\kappa$-orbital integral $\mathcal{O}_{\gamma}^\kappa(f)$ is exactly an adelic stable orbital integrals.
\subsubsection{Endoscopic transfer}\label{sec:endoscopic_transfer}
The theory of endoscopy addresses the problem associated with the failure of stability, which arises from the discrepancy between the rational conjugacy and the stable conjugacy.
Through endoscopy, a pair $(\gamma,\kappa)$ in the summation index of (\ref{eq:pre_stabilization}) corresponds to an endoscopic triple.
For a quasi-split group $G$ over $k$, an endoscopic triple $(H,s,\eta)$ consists of 
\begin{enumerate}
        \item a quasi-split connected reductive group $H$;
         \item an embedding $\eta:\hat{H}\rightarrow \hat{G}$ of complex algebraic groups;
         \item an element $s\in [Z(\hat{H})/Z(\hat{G})]^\Gamma$ - we identify $Z(\hat{G})$ with a subgroup of $Z(\hat{H})$ via $\eta$,
     \end{enumerate}     
subject to the conditions in \cite[Definition 3.11]{Kal}, where $\hat{G}$ denotes the connected Langlands dual group of $G$.
We refer to \cite[Section 4]{Kal} for the definition of this notion for a general connected reductive group.

For an endoscopic triple $(H,s,\eta)$ of $G$, the following theorem allows us to transform $\kappa$-orbital integrals to stable orbital integrals of an endoscopic group $H$.
\begin{theorem}\label{thm:smooth_transfer_and_fundamental_lemma}
Suppose that $(H,s,\eta)$ is an endoscopic triple of $G$.
    \begin{enumerate}
        \item (Endoscopic transfer for functions) For every $f\in\mathcal{C}_c^{\infty}(G(k_v))$, there exists a matching function $f^H\in\mathcal{C}_c^{\infty}(H(k_v))$ such that
    \[
    \mathcal{SO}_{\gamma_H}(f^H)=\sum_{\substack{
    \gamma'\in G(k_v)/\sim\\
    \gamma'\sim_{st} \gamma}}\Delta_v(\gamma_H,\gamma')\mathcal{O}_{\gamma'}(f)
    \]
    for every strongly $G$-regular semisimple element $\gamma_H\in H(k_v)$ (see \cite[Remark 3.30]{Kal}) and $\gamma\in G(k_v)$ related to $\gamma_H$ (see \cite[Definition 3.17]{Kal}).
    Here, $\Delta_v(\gamma_H,\gamma')$ is the Langlands-Shelstad local transfer factor defined in \cite{LS}.
        \item (Fundamental lemma) If $G$ and $H$ are unramified, then there exists a constant $c\in \mathbb{C}^\times$ such that
        \[
        \mathcal{SO}_{\gamma_H}(\mathbbm{1}_{K_H})=c\sum_{\substack{\gamma'\in G(k_v)/\sim \\ \gamma' \sim_{st}\gamma}} \Delta_v(\gamma_H,\gamma')\mathcal{O}_{\gamma'}(\mathbbm{1}_{K_G}),
        \]
        for every strongly $G$-regular semisimple element $\gamma_H\in H(k_v)$ and $\gamma\in G(k_v)$ related to $\gamma_H$.
        Here, $K_G$ and $K_H$ are hyperspecial maximal compact subgroups of $G(k_v)$ and $H(k_v)$, respectively.
    \end{enumerate}
\end{theorem}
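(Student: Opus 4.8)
We do not establish this theorem from scratch: it is the combination of several deep results, and the plan is to assemble them in the form stated. The logical skeleton is that part (2), the fundamental lemma, is the foundational input, while part (1), smooth endoscopic transfer, is deduced from it together with Waldspurger's reduction arguments. Throughout, $\Delta_v(\gamma_H,\gamma')$ denotes the canonical Langlands--Shelstad normalization \cite{LS}, whose formal properties --- the cocycle relation, the behavior under $Z(\hat G)$-twists, the vanishing off related pairs, and the compatibility with the unramified normalization that forces the constant $c$ in (2) to be $1$ for the standard choices of measures --- we take as known.

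\textbf{Part (2).} We would first reduce the group statement to its Lie algebra analogue. Using Langlands--Shelstad descent around a semisimple element, together with a Cayley or exponential map, the orbital integrals near $\gamma$ are matched --- up to an explicit Jacobian factor and a comparison of transfer factors --- with orbital integrals on $\mathfrak g=\mathrm{Lie}(G)$ near the corresponding data; combined with Waldspurger's further reduction to the ``non-standard'' fundamental lemma for simply connected groups sharing the root datum, this reduces (2) to the Lie algebra fundamental lemma. Over a local field of equal positive characteristic the latter is Ng\^{o}'s theorem, proved through the geometry of the Hitchin fibration: weighted sums of orbital integrals are realized as point counts on affine Springer fibers, globalized as stalks of the cohomology of Hitchin fibers, and the $\kappa$-isotypic part for $G$ is identified with the stable part for $H$ using the decomposition theorem and Ng\^{o}'s support theorem. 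To descend from equal characteristic to the characteristic-zero field $k_v$, we would invoke either Waldspurger's ``changement de caract\'eristique'' argument or the Cluckers--Loeser motivic transfer principle (in the form developed with Hales), which propagates such an identity from local fields of one sufficiently large residue characteristic to all $p$-adic fields, the finitely many remaining residue characteristics being covered by Waldspurger and others. For the case $G=\mathrm{SL}_n$ or $\mathrm{GL}_n$ and its elliptic endoscopy --- the only case used in Section \ref{sec:application} --- one may instead cite the classical explicit computations.

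\textbf{Part (1).} Smooth transfer then follows from part (2) by Waldspurger's theorem that the Lie algebra fundamental lemma implies the existence of matching functions. The mechanism is this: by Harish-Chandra's theory of orbital integrals, the local constancy of transfer factors, and the Shalika germ expansion, the linear functional $f\mapsto \sum_{\gamma'\sim_{st}\gamma}\Delta_v(\gamma_H,\gamma')\mathcal{O}_{\gamma'}(f)$ on $\mathcal{C}_c^\infty(G(k_v))$ is an invariant distribution supported on the strongly $G$-regular semisimple set and ``stably invariant'' in the appropriate sense, and Waldspurger shows --- using the fundamental lemma --- that every such distribution is realized as $\gamma_H\mapsto \mathcal{SO}_{\gamma_H}(f^H)$ for some (highly non-unique) $f^H\in\mathcal{C}_c^\infty(H(k_v))$. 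At archimedean places one instead cites Shelstad's construction of the transfer. Collecting these produces the matching function $f^H$.

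\textbf{Main obstacle.} The genuinely hard ingredient is Ng\^{o}'s proof of the Lie algebra fundamental lemma in part (2), which needs the full apparatus of the Hitchin fibration, perverse sheaves, and the support theorem, plus the equal-characteristic-to-$p$-adic transfer. The descent step in part (2) and all of part (1) are, by comparison, lengthy but formal manipulations with transfer factors and orbital-integral germs; since none of this is used in detail here, we will invoke only the final statements, with the simplifications available for $\mathrm{GL}_n$.
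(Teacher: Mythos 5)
Your proposal is correct and matches the paper's treatment: the paper states this theorem as a known black-box input (citing Langlands–Shelstad for the transfer factors and the standard literature via \cite{Kal}), offering no proof of its own, and your outline — Ngô's Lie algebra fundamental lemma plus transfer to characteristic zero for part (2), then Waldspurger's deduction of smooth transfer together with Shelstad's archimedean transfer for part (1) — is exactly the standard provenance of these results. Nothing more is needed here, since the paper only invokes the final statements (with the simplifications available for $\mathrm{SL}_n$/$\mathrm{GL}_n$ in Section \ref{sec:application}).
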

\begin{remark}\label{rmk:transfer_factor}
By \cite[Theorem 5.15]{Kal}, the Langlnads-Shelstad local factor $\Delta_v$ satisfies that 
\[
\prod_{v\in\Omega_k}\Delta_v(\gamma_H,\gamma_v')=\kappa(inv(\gamma',\gamma)),\]
where $\gamma'=(\gamma_v')_{v\in \Omega_k}\in G(\mathbb{A}_k)$ is stably conjugate to $\gamma\in G(k)_{ell.reg}$ and $\gamma_H\in H(k)$ is related to $\gamma$.
\end{remark}

We carry out the remaining steps in the stabilization by applying the correspondence given in \cite[Lemma 9.7]{Kot86}. 
In order to use this correspondence, one must switch from the sum over stable conjugacy classes in $G(k)$ in (\ref{eq:ell.reg.part}) to the sum over stable conjugacy classes in its quasi-split inner form $G_0(k)$. 
For clarity, we first assume that $G$ is quasi-split, and after that we briefly address the general case in Remark \ref{rmk:stabilization_innerform}.

By \cite[Lemma 9.7]{Kot86}, for each pair $(\gamma,\kappa)$ in the summation index of (\ref{eq:pre_stabilization}),
there exist an endoscopic triple $(H,s,\eta)$ of $G$ and an element $\gamma_H\in H(k)$ related to $\gamma$ that together correspond to $(\gamma,\kappa)$. 
Then Remark \ref{rmk:transfer_factor} yields that, for $\gamma\in G(k)_{ell.reg}/\sim_{st}$,
\[
\mathcal{O}_{\gamma}^\kappa(f)=\sum_{\substack{\gamma'\in G(\mathbb{A}_k)/\sim\\ \gamma'\sim_{st}\gamma}}\left(\prod_{v\in\Omega_k}\Delta_v(\gamma_{H},\gamma_v')\right)\mathcal{O}_{\gamma'}(f)=\prod_{v\in\Omega_k}\sum_{\substack{\gamma_v'\in G(k_v)/\sim \\ \gamma_v'\sim_{st}\gamma}}\Delta_v(\gamma_H,\gamma_v')\mathcal{O}_{\gamma_v'}(f_v).
\]
Theorem \ref{thm:smooth_transfer_and_fundamental_lemma} provides a function $f_v^H$ matching with $f_v$ for each $v\in\Omega_k$, and hence for $f^H=(f^H_v)_{v\in\Omega_k}$ we have
\begin{equation}\label{eq:summtion_after_use_transfer}
\sum_{\kappa\in \mathfrak{K}(G_{\gamma}/k)^*}\mathcal{O}_{\gamma}^\kappa(f)=\sum_{\substack{\kappa\in \mathfrak{K}(G_{\gamma}/k)^*\\(\gamma,\kappa)\mapsto((H,s,\eta),\ \gamma_H)\\}}\mathcal{SO}_{\gamma_{H}}(f^H),~~\text{ where } 
\mathcal{SO}_{\gamma_H}(f^H)=\prod_{v\in \Omega_k}\mathcal{SO}_{\gamma_H}(f^H_v).
\end{equation}

In order to complete the stabilization, we collect the terms in (\ref{eq:pre_stabilization}) associated with a single endoscopic triple $(H,s,\eta)$. 
This yields the final formula
\begin{equation}\label{eq:stabilization}
\mathrm{TF}_{ell.reg}(f)=\sum_{(H,s,\eta)/\cong}\iota(G,H)\cdot \mathrm{STF}_{G.ell.reg}(f^H),    
\end{equation}
where the sum is over isomorphism classes of elliptic endoscopic triples of $G$ (see \cite[Definition 3.12-3.13]{Kal}). 
The terms in (\ref{eq:stabilization}) are defined by
\[\iota(G,H)=\tau(G)\tau(H)^{-1}\lambda^{-1}~~~~\text{ and }~~~~\mathrm{STF}_{G.ell.reg}(f^H)=\tau(H)\sum_{\gamma_H}\mathcal{SO}_{\gamma_H}(f^H),\] where
$\lambda$ is defined in \cite[Section 9.4]{Kot86}, and 
the summation in $\mathrm{STF}_{G.ell.reg}(f^H)$ is taken over the set of stable conjugacy classes of strongly $G$-regular elliptic elements of $H(k)$.

\begin{remark}\label{rmk:root_system_of_endoscopic_group}
In this remark, we describe the endoscopic group $H$ corresponding to a given pair $(\gamma,\kappa)$ for a quasi-split group $G$.
By Remark \ref{rmk:identification_of_cohomology}, we identify  $\kappa$ as an element of $\pi_0(\hat{T}^{\Gamma})$, where $T:=G_{\gamma}$. 
Since $T$ is an anisotropic torus, $\hat{T}^{\Gamma}$ is finite and so we have $\pi_0(\hat{T}^\Gamma)=\hat{T}^\Gamma$.
Via the identification $\hat{T}\cong \mathrm{Hom}(X_*(T),\mathbb{G}_m)$, we define 
\[
R^{\vee}(T,H):=\{\alpha^\vee \in R^\vee(T,G)\mid \langle \kappa,\alpha^\vee\rangle=1\},
\]where $R^\vee(T,G)$ is the set of coroots of $T$ in $G$. 
We denote by $R(T,H)$ the dual of $R^{\vee}(T,H)$ and fix a basis $\Delta^H$ of $R(T,H)$.
Let $\omega_\sigma^H$ be the unique element of the Weyl group $\Omega(R(T,H))$ of $R(T,H)$ such that $\omega_\sigma^H\sigma_T$ preserves $\Delta^H$, where $\sigma_T$ denotes the $\Gamma$-action on $X^*(T)$ induced from that on $T$.
We define a root datum
\begin{equation}\label{eq:root_datum_of_H}
(X^*(T),R(T,H),X_*(T), R^\vee(T,H))\curvearrowleft\omega_\sigma^H \sigma_T ~~~~~ \text{   for }\sigma\in\Gamma,
\end{equation} 
equipped with a twisted $\Gamma$-action by $\{\omega_\sigma^H\}_{\sigma\in\Gamma}$.
Then there is a unique quasi-split connected reductive group $H$ over $k$, up to isomorphism, with a minimal Levi subgroup whose root datum in $H$ coincides with (\ref{eq:root_datum_of_H}).

By \cite[p. 101]{Kal}, $\{\omega_\sigma^{H}\}_{\sigma\in\Gamma}$ induces the canonical stable class of embeddings $i:T\rightarrow H$.
We then have $\gamma_H=i(\gamma)$ and $T^H=i(T)$, up to stable conjugacy in $H$.
Moreover, $i$ induces the isomorphism from $T^H$ to $T$, in \cite[Lemma 3.3.B]{KS}, which is called an admissible embedding.
Clearly, $\gamma_H$ is strongly $G$-regular, and hence its centralizer is $T^H$ by \cite[Lemma 3.3.C]{KS}.

\end{remark}

\begin{remark}\label{rmk:fundamental_lemma_for_quasi_split}
    With the notations in Remark \ref{rmk:root_system_of_endoscopic_group}, the equality in Theorem \ref{thm:smooth_transfer_and_fundamental_lemma}.(2) is given more explicitly in \cite[Section 6.1]{Tom} as follows:
    \begin{align*}
    \Bigg(\prod_{\alpha\in R(T^H,H)}&|\alpha(\gamma_H)-1|_v^{1/2} \Bigg)\frac{vol(m^{T^H}_v,K_{T^H})}{vol(m^{H}_v,K_H)}\mathcal{SO}_{\gamma_H}(\mathbbm{1}_{K_H})\\
    &=\Bigg(\prod_{\alpha\in R(T,G)}|\alpha(\gamma)-1|_v^{1/2}\Bigg)\frac{vol(m^{T}_v,K_{T})}{vol(m^{G}_{v},K_G)}\sum_{\substack{\gamma_v'\in G(k_v)/\sim \\ \gamma_v' \sim_{st}\gamma}}
    \kappa_v(inv_v(\gamma_v', \gamma))
    \mathcal{O}_{\gamma_v'}(\mathbbm{1}_{K_G}),
    \end{align*}
    where 
     $K_T$ and $K_{T^H}$ are the maximal compact subgroups of $T(k_v)$ and $T^H(k_v)$, respectively, and $\kappa_v$ is the image of $\kappa$ under $\hat{T}^\Gamma\rightarrow \hat{T}^{\Gamma_v}$.
\end{remark}

\begin{remark}\label{rmk:stabilization_innerform}
    For a general connected reductive group $G$, we fix an inner twist $\psi:G_0\rightarrow G$ as in \cite[Section 6]{Kot82} where $G_0$ is the quasi-split inner form of $G$.
    For $\gamma_0\in G_0(k)$ and $\gamma'\in G(\mathbb{A}_k)$ conjugate to $\gamma_0$ under $G_0(\mathbb{A}_{\bar{k}}) (=G(\mathbb{A}_{\bar{k}}))$,
    the invariant $\mathrm{obs}(\gamma')\in \mathfrak{K}(G_{0,\gamma_0}/k)$, defined in \cite[Section 5.4]{Kal}, is used to determine whether the $G(\mathbb{A}_k)$-conjugacy class of $\gamma'$ contains a $k$-point.
    This is the analogue of $inv(\gamma',\gamma)$ in (\ref{eq:kappa_orbital_integral}) for $\gamma \in G(k)$. 
    Via this invariant, we have
\begin{equation}\label{eq:pre_stabilization_innerform}
    \mathrm{TF}_{ell.reg}(f)=\sum_{\gamma_0\in G_0(k)_{ell.reg}/\sim_{st}} ~~
    \sum_{\kappa\in \mathfrak{K}(G_{0,\gamma_0}/k)^*}\mathcal{O}^{\kappa}_{\gamma_0}(f)
~~~~~\text{ where }\mathcal{O}_{\gamma_0}^\kappa(f)=\sum_{\substack{\gamma'\in G(\mathbb{A}_k)/\sim\\ \gamma'\sim_{st}\psi(\gamma_0)}}\kappa(\mathrm{obs}(\gamma'))\mathcal{O}_{\gamma'}(f).
\end{equation}
As in the quasi-split case, for each pair $(\gamma_0,\kappa)$, we obtain an endoscopic triple $(H,s,\eta)$ of $G$ and an element $\gamma_H\in H(k)$ that together correspond to $(\gamma_0,\kappa)$ by \cite[Lemma 9.7]{Kot86}.
By applying Theorem \ref{thm:smooth_transfer_and_fundamental_lemma}, we obtain the final formula, which agrees with (\ref{eq:stabilization}).
\end{remark}
\section{Main Result}\label{sec:main_result}
Suppose that $G$ is semisimple and simply connected, and that $X\cong G_\gamma \backslash G$ where $\gamma$ is an elliptic regular element of $G(k)$. 
By Remark \ref{rmk:conjugacy_and_pointsofX}, $X(\mathbb{A}_k)$ is identified with the stable conjugacy class of $\gamma$ in $G(\mathbb{A}_k)$. 
For a model $\mathbf{X}$ of $X$ over $\mathcal{O}_k$ and $T>0$, we recall the function $f_{\mathbf{X},T}$ on $X(\mathbb{A}_k)$, defined by
\[
f_{\mathbf{X},T}= \mathbbm{1}_{X(k_\infty,T)\times \prod_{v<\infty_k}\mathbf{X}(\mathcal{O}_k)},
\]where $X(k_\infty,T)= \{x\in X(k_\infty)\mid \norm{x}_\infty\leq T\}$.
Let $\tilde{f}_{\mathbf{X}, T}$ be a function on $G(\mathbb{A}_k)$ such that
\begin{equation}\label{eq:tilde_f}
\tilde{f}_{\mathbf{X}, T}(x)=f_{\mathbf{X}, T}(x) \text{ for $x\in X(\mathbb{A}_{k})$}.
\end{equation}
Following the notations of Section \ref{sec:arthur's_trace_formula}, $\sim_F$ and  $\sim_{st,F}$  denote the rational and stable conjugate relations over $F$, respectively, for $F=k,k_v$, and $\mathbb{A}_k$. 
If the base field is clear from the context, then we sometimes abbreviate the notations to $\sim$ and $\sim_{st}$, respectively.

Our main theorem establishes an asymptotic formula for $ N(X;f_{\mathbf{X},T})$ in terms of $\kappa$-orbital integrals, arising in the pre-stabilization (\ref{eq:pre_stabilization}) of the elliptic regular part.
This suggests a link between the asymptotic behavior of $ N(X;f_{\mathbf{X},T})$ and a part of the geometric side of the trace formula associated with the stable class of $\gamma$.

\begin{theorem}\label{thm:main}

   If $G'(k_\infty)$ is not compact for any non-trivial simple factor $G'$ of $G$ and $X$ satisfies the euqidistribution property (\ref{eq:equidistribution_property}), 
    then we have 
    \begin{equation}\label{eq:mainthm}
     N(X;f_{\mathbf{X},T})\sim \sum_{\kappa\in H^1(k,G_\gamma(\mathbb{A}_{\bar{k}})/G_\gamma(\bar{k}))^*}\mathcal{O}_{\gamma}^\kappa(\tilde{f}_{\mathbf{X}, T}),\end{equation}
     where 
    $\mathcal{O}_{\gamma}^\kappa(\tilde{f}_{\mathbf{X}, T})=
    \sum\limits_{\substack{\gamma'\in G(\mathbb{A}_k)/\sim_{\mathbb{A}}\\ \gamma'\sim_{st,\mathbb{A}}\gamma}}\kappa(inv(\gamma',\gamma))\mathcal{O}_{\gamma'}(\tilde{f}_{\mathbf{X}, T})$. Here, $\mathcal{O}_{\gamma'}(\tilde{f}_{\mathbf{X},T})$ is the orbital integral for $\tilde{f}_{\mathbf{X},T}$ at $\gamma'$ with respect to the Tamagawa measure on $X(\mathbb{A}_k)$ (see (\ref{eq:orbital_integral})).
\end{theorem}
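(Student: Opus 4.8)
The plan is to build the asymptotic from Theorem \ref{thm:wx} by carefully unwinding the integral over $X(\mathbb{A}_k)^{\mathrm{Br}}$, in three stages: identify $X(\mathbb{A}_k)^{\mathrm{Br}}$ with the union of $G(\mathbb{A}_k)$-orbits containing a $k$-point, decompose the Tamagawa measure along these orbits, and then re-package the resulting sum over rational classes using the characteristic function of $\mathrm{im}(\alpha)$ built from $\kappa$-sums, exactly as in the pre-stabilization.

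\textbf{Step 1: Describe $X(\mathbb{A}_k)^{\mathrm{Br}}$.} Using the commutative diagram (\ref{eq:diagram_for_H1}) from \cite[Theorem 3.1]{CtX} and \cite[Proposition 2.9--2.10]{CtX}, a point $x\in X(\mathbb{A}_k)$ lies in $X(\mathbb{A}_k)^{\mathrm{Br}}$ if and only if its image under $X(\mathbb{A}_k)\to\mathrm{Hom}(\mathrm{Br}\,X/\mathrm{Br}\,k,\mathbb{Q}/\mathbb{Z})$ vanishes, which by the diagram translates to the condition that $inv(x,x_0)=(inv_v(x_v,x_0))_v\in\bigoplus_v H^1(k_v,H)$ maps to zero in $\mathrm{Hom}(\mathrm{Pic}\,H,\mathbb{Q}/\mathbb{Z})$, i.e.\ $inv(x,x_0)$ lies in the image of $H^1(k,H)\to\bigoplus_v H^1(k_v,H)$. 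Chasing the diagram one more step (and using that $G$ is simply connected, so $H^1(k,G)\hookrightarrow\bigoplus_v H^1(k_v,G)$ is injective by the Hasse principle), this is exactly the condition that the class of $inv(x,x_0)$ in $\ker_\mathbb{A}(X)$ lies in $\mathrm{im}(\alpha:\ker_k(X)\to\ker_\mathbb{A}(X))$. By Remark \ref{rmk:ker_of_alpha} this says precisely that the $G(\mathbb{A}_k)$-orbit of $x$ in $X(\mathbb{A}_k)$ contains a point of $X(k)$. (This is the place where I would record the claim promised in Remark \ref{rmk:comparision_bet_Q_k}.) Concretely, writing $H=G_\gamma$ and identifying $X(\mathbb{A}_k)$ with the stable class of $\gamma$ via Remark \ref{rmk:conjugacy_and_pointsofX}, $X(\mathbb{A}_k)^{\mathrm{Br}}$ is the union of the orbits $\gamma'\cdot G(\mathbb{A}_k)$ as $\gamma'$ ranges over $\{\gamma'\in G(\mathbb{A}_k)/\sim_\mathbb{A}:\gamma'\sim_{st,\mathbb{A}}\gamma,\ \gamma' \text{ rationally matched to some }\gamma_k\in G(k)\}$.

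\textbf{Step 2: Decompose the integral orbit-by-orbit.} Since $X(\mathbb{A}_k)^{\mathrm{Br}}$ is a disjoint union of $G(\mathbb{A}_k)$-orbits, and $f_{\mathbf{X},T}$ is $G(\mathbb{A}_k)$-integrable against $m^X$ (the measure factors as $\frac{m^G}{m^{G_\gamma}}$ along the orbit through $\gamma'$ by Proposition \ref{prop:alg-top_match} and the definition of the Tamagawa measures, at least after matching gauge forms $\omega_G=\omega_X\cdot\omega_{G_\gamma}$; one must check the $\lambda_v$- and $r_X$-factors in Definition \ref{def:tamagawameasure} are consistent, which holds because $r_X=r_G/r_{G_\gamma}$ and $\lambda_v^X=\lambda_v^G/\lambda_v^{G_\gamma}$ are built in), one gets
\[
\int_{X(\mathbb{A}_k)^{\mathrm{Br}}}f_{\mathbf{X},T}\,m^X=\sum_{\substack{\gamma'\in G(\mathbb{A}_k)/\sim_\mathbb{A}\\ \gamma'\sim_{st,\mathbb{A}}\gamma,\ \text{orbit meets }X(k)}}\mathcal{O}_{\gamma'}(\tilde f_{\mathbf{X},T}),
\]
where each summand is the adelic orbital integral of (\ref{eq:orbital_integral}) computed against $\frac{m^G}{m^{G_\gamma}}$, and $\tilde f_{\mathbf{X},T}$ agrees with $f_{\mathbf{X},T}$ on $X(\mathbb{A}_k)$ so the values are unchanged. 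One subtlety to address is convergence/finiteness: only finitely many orbits meet the support of $f_{\mathbf{X},T}$, which follows because $\prod_{v<\infty}\mathbf{X}(\mathcal{O}_{k_v})$ is compact open and $X(k_\infty,T)$ is compact, so the support is compact and meets finitely many orbits (the orbit space is discrete over each place).

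\textbf{Step 3: Re-package via $\kappa$-sums and identify with $\sum_\kappa\mathcal{O}_\gamma^\kappa$.} Now I run the pre-stabilization bookkeeping in reverse. By Remark \ref{rmk:identification_of_cohomology}, $|\mathrm{Br}\,X/\mathrm{Br}\,k|=|\mathfrak{K}(G_\gamma/k)|$, and the function $\gamma'\cdot G(\mathbb{A}_k)\mapsto|\mathfrak{K}(G_\gamma/k)|^{-1}\sum_{\kappa\in\mathfrak{K}(G_\gamma/k)^*}\kappa(inv(\gamma',\gamma))$ is, by the exact sequence (\ref{eq:mainthm_ses}) and Pontryagin duality on the finite group $\mathfrak{K}(G_\gamma/k)$, the characteristic function of the set of orbits whose image in $\ker_\mathbb{A}$ lies in $\mathrm{im}(\alpha)$ — i.e.\ exactly the orbits meeting $X(k)$ (Step 1). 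Therefore
\[
|\mathrm{Br}\,X/\mathrm{Br}\,k|\int_{X(\mathbb{A}_k)^{\mathrm{Br}}}f_{\mathbf{X},T}\,m^X=\sum_{\substack{\gamma'\in G(\mathbb{A}_k)/\sim_\mathbb{A}\\ \gamma'\sim_{st,\mathbb{A}}\gamma}}\Bigl(\sum_{\kappa\in\mathfrak{K}(G_\gamma/k)^*}\kappa(inv(\gamma',\gamma))\Bigr)\mathcal{O}_{\gamma'}(\tilde f_{\mathbf{X},T})=\sum_{\kappa\in\mathfrak{K}(G_\gamma/k)^*}\mathcal{O}_\gamma^\kappa(\tilde f_{\mathbf{X},T}),
\]
where the inner sum kills the orbits not meeting $X(k)$ and leaves the others with weight $1$, and the last equality is Fubini on the finite double sum together with the definition (\ref{eq:kappa_orbital_integral}). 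Combining with Theorem \ref{thm:wx} gives (\ref{eq:mainthm}).

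\textbf{Main obstacle.} The delicate point is Step 1 — pinning down $X(\mathbb{A}_k)^{\mathrm{Br}}$ precisely as the union of orbits meeting $X(k)$. This requires carefully threading the compatibility (asserted after (\ref{eq:diagram_for_H1})) between the Brauer evaluation pairing $H^1(k_v,H)\times\mathrm{Pic}\,H_{k_v}\to\mathrm{Br}\,k_v$ and the isomorphism $\mathrm{Pic}\,H\cong\mathrm{Br}\,X/\mathrm{Br}\,k$, and then invoking the Hasse principle for the simply connected $G$ to pass from ``$inv(x,x_0)$ lifts to $H^1(k,H)$'' to ``the class in $\ker_\mathbb{A}$ lies in $\mathrm{im}(\alpha)$''. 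A secondary but real technical hurdle is matching the Tamagawa measure $m^X$ on a single orbit with the quotient measure $\frac{m^G}{m^{G_\gamma}}$ used to define $\mathcal{O}_{\gamma'}$, keeping careful track of the $|\Delta_k|$-powers and the convergence factors $\lambda_v$, $r$; this is essentially Proposition \ref{prop:alg-top_match} promoted to the adelic level, and one should cite \cite[Section 2]{Wei82} for the global comparison rather than re-derive it.
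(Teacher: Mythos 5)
Your proposal is correct and follows essentially the same route as the paper's own proof: reduce to the Brauer--Manin integral via Theorem \ref{thm:wx}, identify $X(\mathbb{A}_k)^{\mathrm{Br}}$ with the union of $G(\mathbb{A}_k)$-orbits meeting $X(k)$ using the diagram (\ref{eq:diagram_for_H1}), \cite[Theorem 3.2]{CtX} and the Hasse principle for the simply connected $G$, then decompose orbit-by-orbit into orbital integrals and insert the characteristic function $|\mathfrak{K}(G_\gamma/k)|^{-1}\sum_\kappa\kappa$ of $\mathrm{im}(\alpha)$ together with $|\Br X/\Br k|=|\mathfrak{K}(G_\gamma/k)|$. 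The extra care you take with the measure matching and the finiteness of contributing orbits is consistent with, and slightly more explicit than, the paper's argument.
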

\begin{proof}
    By Theorem \ref{thm:wx}, we have
    \begin{equation}\label{eq:Br_integral}
     N(X;f_{\mathbf{X},T})\sim |\Br X/\Br k|\int_{X(\mathbb{A}_k)^{\mathrm{Br}}} f_{\mathbf{X},T}\ m^X.
    \end{equation}
    According to \cite[Theorem 3.2]{CtX}, the set $X(\mathbb{A}_k)^{\mathrm{Br}}$ is identified with the kernel of the composite map in the diagram (\ref{eq:diagram_for_H1}):
    \[
    X(\mathbb{A}_k)\rightarrow \bigoplus_{v\in\Omega_k}H^1(k_v,G_\gamma)\rightarrow \mathrm{Hom}(\Pic G_\gamma,\mathbb{Q}/\mathbb{Z}).
    \] 
    From the exact sequence of pointed sets in the diagram (\ref{eq:diagram_for_H1})
    \[H^1(k,G_\gamma)\xrightarrow{\alpha^1} \bigoplus_{v\in\Omega_k}H^1(k_v,G_\gamma)\rightarrow \mathrm{Hom}(\Pic G_\gamma,\mathbb{Q}/\mathbb{Z}),\]
    we have 
    \begin{equation}\label{eq:Br_fixed_description}
    X(\mathbb{A}_k)^{\mathrm{Br}}=\{\gamma'\in X(\mathbb{A}_k)\mid inv(\gamma',\gamma)\in im(\alpha^1)\}.
    \end{equation}    
    On the other hand, consider the following diagram of pointed sets:
    \[
    \begin{tikzcd}
	1 & {\ker_k} & {H^1(k,G_\gamma)} & {H^1(k,G)} \\
	1 & {\ker_\mathbb{A}} & {\bigoplus_{v\in\Omega_k}H^1(k_v,G_\gamma)} & {\bigoplus_{v\in\Omega_k}H^1(k_v,G)}.
	\arrow[from=1-1, to=1-2]
	\arrow["\subset",from=1-2, to=1-3]
	\arrow["\alpha", from=1-2, to=2-2]
	\arrow[from=1-3, to=1-4]
	\arrow["\alpha^1", from=1-3, to=2-3]
	\arrow[from=1-4, to=2-4]
	\arrow[from=2-1, to=2-2]
	\arrow["\subset" ,from=2-2, to=2-3]
	\arrow[from=2-3, to=2-4]
    \end{tikzcd}
    \]
By the Hasse principle for simply connected groups (see \cite[Theorem 6.6]{PR94}),  the map $H^1(k,G)\rightarrow \bigoplus_{v\in\Omega_k}H^1(k_v,G)$ is injective.
Since the image of $inv(-,\gamma):X(\mathbb{A}_k)\rightarrow \bigoplus_{v\in\Omega_k}H^1(k_v,G_\gamma)$ is exactly $\ker _\mathbb{A}$, the equation (\ref{eq:Br_fixed_description}) turns to be
\begin{equation}\label{eq:Br=H1_description}
    X(\mathbb{A}_k)^{\mathrm{Br}}=\{\gamma'\in X(\mathbb{A}_k)\mid inv(\gamma',\gamma)\in im(\alpha)\}=\bigsqcup_{\substack{\gamma'\in X(\mathbb{A}_k)/G(\mathbb{A}_k)\\ inv(\gamma',\gamma)\in im(\alpha)}}\gamma'\cdot G(\mathbb{A}_k).    
\end{equation}
Therefore, the right-hand side of the equation (\ref{eq:Br_integral}) is reformulated as follows
    \begin{equation}\label{eq:pre_last_formula}
     N(X;f_{\mathbf{X},T})\sim |\Br X/\Br k|\sum_{a\in im(\alpha)}\int_{\gamma_a \cdot G(\mathbb{A}_k)}f_{\mathbf{X},T}\ m^X,
    \end{equation}
where $\gamma_a\in X(\mathbb{A}_k)$ represents the $G(\mathbb{A}_k)$-orbit in $X(\mathbb{A}_k)$ such that $inv(\gamma_a,\gamma)=a$ for each $a\in im(\alpha)$.

Recall that the function 
\[|\mathfrak{K}(G_\gamma/k)|^{-1}\sum\limits_{\kappa\in \mathfrak{K}(G_\gamma/k)^*}\kappa\]
on $\mathfrak{K}(G_{\gamma}/k):=H^1(k,G_\gamma(\mathbb{A}_{\bar{k}})/G_\gamma(\bar{k}))$, introduced in Section \ref{sec:pre_stabilization},
is the characteristic function of the identity element.
Recall also that its pullback to
$\ker_\mathbb{A}$ along the map $\ker_\mathbb{A}\rightarrow\mathfrak{K}(G_\gamma/k)$ in (\ref{eq:mainthm_ses}) is the characteristic function of $im (\alpha)$.
It then follows that
    \[
     N(X;f_{\mathbf{X},T})\sim |\Br X/ \Br k| \cdot |\mathfrak{K}(G_\gamma/k)|^{-1}\sum\limits_{\kappa\in \mathfrak{K}(G_\gamma/k)^*}\sum_{\gamma'\in X(\mathbb{A}_k)/G(\mathbb{A}_k)}\kappa(inv(\gamma',\gamma)) \int_{\gamma'\cdot G(\mathbb{A}_k)}f_{\mathbf{X},T} \ m^X.
    \]
By Remark \ref{rmk:identification_of_cohomology}, the factor $|\Br X/ \Br k| \cdot |\mathfrak{K}(G_\gamma/k)|^{-1}$ equals 1. 
Furthermore, by Remark \ref{rmk:conjugacy_and_pointsofX} $\gamma'\cdot G(\mathbb{A}_k)$ is identified with the $G(\mathbb{A}_k)$-conjugacy class of $\gamma$ in $G(\mathbb{A}_k)$. Therefore, for a function $\tilde{f}_{\mathbf{X},T}$ on $G(\mathbb{A}_k)$ satisfying (\ref{eq:tilde_f}), we have
    \[
    \sum_{\gamma'\in X(\mathbb{A}_k)/G(\mathbb{A}_k)}\kappa(inv(\gamma',\gamma)) \int_{\gamma'\cdot G(\mathbb{A}_k)}f_{\mathbf{X},T} \ m^X=\mathcal{O}_{\gamma}^\kappa(\tilde{f}_{\mathbf{X},T}).
    \]
    This completes the proof of the theorem.
\end{proof}
\begin{remark}\label{rmk:main}
Let $\mathbf{G}$ be an affine model of $G$ over $\mathcal{O}_k$ and let $\mathbf{X}$ be the schematic closure of $X$ in $\mathbf{G}$.
Since $\mathcal{O}_{k_v}$ is a flat $\mathcal{O}_{k}$-module, 
$\mathbf{X}_{\mathcal{O}_{k_v}}$ is the schematic closure of $X_{k_v}$ in $\mathbf{G}_{\mathcal{O}_{k_v}}$ and one has
$\mathbf{X}(\mathcal{O}_{k_v})=X(k_v)\cap \mathbf{G}(\mathcal{O}_{k_v})$ by \cite[Lemma 2.4.3]{GH19} where the intersection is taken in $G(k_v)$ and $X(k_v)$ is identified with the stable conjugacy class of $\gamma$ in $G(k_v)$.
We define a function $f_{\mathbf{G},T}$ on $G(\mathbb{A}_k)$ as follows
\begin{equation}\label{eq:choice_of_G_integral}
    f_{\mathbf{G},T}=\prod_{v\in \Omega_k}f_{\mathbf{G},T,v} \text{ where }f_{\mathbf{G},T,v} =\left\{
    \begin{array}{l l}
        \mathbbm{1}_{\mathbf{G}(\mathcal{O}_{k_v})} & v<\infty_k; \\
        \sum_{\gamma_v'\sim_{st,k_v}\gamma}f_{\mathbf{G},T,\gamma_v'} & v\in \infty_k.
    \end{array}
    \right.
    \end{equation}
    Here, for $v \in \infty_k$ and $\gamma_v'\sim_{st,k_v}\gamma$, the function $f_{\mathbf{G},T,\gamma_v'}$ is taken to be a smooth function supported on an open neighborhood of $\gamma_v'\cdot G(k_v)\cap X(k_v,T)$ in $G(k_v)$, that does not intersect any $G(k_v)$-conjugacy class in $X(k_v)$ other than $\gamma_v'\cdot G(k_v)$ (for $G(k_v)$-conjugacy classes in $X(k_v)$, such neighborhoods exist since $G(k_v)$ is a Lie group, hence Hausdorff, and conjugacy classes of semisimple elements are closed). 
    It is normalized so that
    \[
    \int_{\gamma_v'\cdot G(k_v)}f_{\mathbf{G},T,\gamma_v'}\ m_v^X=vol(m_v^X, \gamma_v' \cdot G(k_v)\cap X(k_v,T)).
    \]
    For almost all $v<\infty_k$, by \cite[Proposition 2.4.5]{GH19}, $f_{\mathbf{G},T,v}$ coincides with the characteristic function of a hyperspecial subgroup of $G(k_v)$.
    Moreover, it follows that $f_{\mathbf{G},T}\in \mathcal{C}_c^\infty(G(\mathbb{A}_k))$.

The asymptotic equivalence (\ref{eq:mainthm}) remains valid when we replace   $\tilde{f}_{\mathbf{X},T}$ with $f_{\mathbf{G},T}$.
In this case, the right-hand side of (\ref{eq:mainthm}) coincides with that of the equation (\ref{eq:pre_stabilization}) putting $f=f_{\mathbf{G},T}$.
By applying the reverse of derivation for pre-stabilization described in \ref{sec:pre_stabilization}, we obtain that
\begin{equation}\label{eq:compare_with_trace_formula}
  N(X;f_{\mathbf{X},T})\sim\sum_{\substack{\gamma'\in G(k)/\sim\\ \gamma'\sim_{st}\gamma}} \tau(G_{\gamma'})\mathcal{O}_{\gamma'}(f_{\mathbf{G},T})=\sum\limits_{\substack{\mfo'\in\mathcal{O}\\ \mfo'\sim_{st}\mfo}}J_{\mfo'}(f_{\mathbf{G},T}),\end{equation}
where the right-hand side is the part of the geometric side of the trace formula for $f_{\mathbf{G},T}$.

This observation suggests that the asymptotic behavior of $ N(X;f_{\mathbf{X},T})$ should reflect contributions from the spectral side of the trace formula.
To make this expectation precise, we should consider the contribution of $f_{\mathbf{G},T}$ to the geometric terms associated with semisimple conjugacy classes in $G(k)$ which is not stably conjugate to $\gamma$.
Moreover, it is necessary to analyze the multiplicity and the character of any automorphic representation $\pi$ for which $\mathrm{tr}(\pi(f_{\mathbf{G},T}))$ is nonzero.
However, this would be challenging.
Indeed, when the effect of a test function on the geometric side is straightforward to determine
— for instance, as in our case, where we analyze the contribution from a single stable conjugacy class — 
it often becomes complicated on the spectral side since it may influence a vast range of representations.
Such a phenomenon already appears in the simplest example of the trace formula, namely the Poisson summation formula.

On the other hand, using the $L^2$-expansion for a function on $[G]$ associated with $ N(X;f_{\mathbf{X},T})$, one may obtain another spectral interpretation of the asymptotic behavior of $ N(X;f_{\mathbf{X},T})$. 
We discuss this observation in Appendix \ref{app:spectral_expansion}.

\end{remark}

\begin{corollary}\label{cor:main_cor}
Under the same assumptions with Theorem \ref{thm:main},
let $\mathbf{G}$ be an affine model of $G$ and $\mathbf{X}$ be the schematic closure of $X$ in $\mathbf{G}$.
We have
\[
     N(X;f_{\mathbf{X},T})\sim \sum_{\substack{\kappa\in H^1(k,G_{0,\gamma_0}(\mathbb{A}_{\bar{k}})/G_{0,\gamma_0}(\bar{k}))^*\\(\gamma_0, \kappa)\mapsto((H,s,\eta),\ \gamma_{0,H})\\}}
    \mathcal{SO}_{\gamma_{0,H}}(f^H),
\]
where $G_0$ is the quasi-split inner form of $G$ and $G_{0,\gamma_0}$ is the centralizer of $\gamma_0$ in $G_0$.
Here, the explanation of these terms is as follows:
\begin{itemize}
    \item $\gamma_0\in G_0(k)$ lies in the stable conjugacy class in $G_0(k)$ corresponding to that of $\gamma \in G(k)$ under the map in \cite[Section 6]{Kot82}.
    \item $(\gamma_0, \kappa)\mapsto ((H,s,\eta),\gamma_{0,H})$ denotes the correspondence in \cite[Lemma 9.7]{Kot86}. 
    \item $\mathcal{SO}_{\gamma_{0,H}}(f^H)$ is the stable orbital integral at $\gamma_{0,H}$ for a function $f^H\in\mathcal{C}_c^\infty(H(\mathbb{A}_k))$ (see Definition \ref{def:stable_conjugacy}). 
Here, $f^H$ matches the function $f_{\mathbf{G},T}$ defined in (\ref{eq:choice_of_G_integral}),
in the sense of Theorem \ref{thm:smooth_transfer_and_fundamental_lemma}.
\end{itemize}
\end{corollary}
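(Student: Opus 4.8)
The plan is to combine Theorem~\ref{thm:main} with the endoscopic transfer machinery recalled in Section~\ref{sec:endoscopic_transfer}, applied to the single stable conjugacy class of $\gamma$. By Remark~\ref{rmk:main}, the asymptotic equivalence \eqref{eq:mainthm} remains valid when $\tilde f_{\mathbf{X},T}$ is replaced by the function $f_{\mathbf{G},T}\in\mathcal{C}_c^\infty(G(\mathbb{A}_k))$ of \eqref{eq:choice_of_G_integral}, so that
\[
N(X;f_{\mathbf{X},T})\sim\sum_{\kappa\in\mathfrak{K}(G_\gamma/k)^*}\mathcal{O}_\gamma^\kappa(f_{\mathbf{G},T}),
\]
and, as observed in Remark~\ref{rmk:main}, the right-hand side is exactly the contribution of the stable class of $\gamma$ to the pre-stabilized elliptic regular part. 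Thus the task reduces to running the passage \eqref{eq:summtion_after_use_transfer}--\eqref{eq:stabilization} from $\kappa$-orbital integrals to stable orbital integrals, term by term in $\kappa$ and for this one stable class, with the transferred function built out of $f_{\mathbf{G},T}$.

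First I would pass to the quasi-split inner form. Fixing the inner twist $\psi\colon G_0\to G$ of \cite[Section 6]{Kot82} and a representative $\gamma_0\in G_0(k)$ of the stable class associated with that of $\gamma$, I would use that $G_\gamma$ and $G_{0,\gamma_0}$ are maximal tori that become conjugate over $\bar k$; since tori admit no nontrivial inner forms, $\psi$ restricts to an isomorphism $G_{0,\gamma_0}\xrightarrow{\ \sim\ }G_\gamma$ of $k$-tori. Hence $\mathfrak{K}(G_\gamma/k)=\mathfrak{K}(G_{0,\gamma_0}/k)$, and under this identification the invariant $inv(\gamma',\gamma)$ of Section~\ref{subsec:orbit} (for $\gamma'\in G(\mathbb{A}_k)$ stably conjugate to $\gamma$) corresponds to the invariant $\mathrm{obs}(\gamma')$ of \cite[Section 5.4]{Kal}; this is the content of the inner-form discussion in Remark~\ref{rmk:stabilization_innerform}. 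Consequently $\sum_{\kappa}\mathcal{O}_\gamma^\kappa(f_{\mathbf{G},T})$ is precisely the $\gamma_0$-term of \eqref{eq:pre_stabilization_innerform} evaluated at $f_{\mathbf{G},T}$.

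Next I would apply endoscopy term by term. For each pair $(\gamma_0,\kappa)$, \cite[Lemma 9.7]{Kot86} produces an endoscopic triple $(H,s,\eta)$ of $G$ and an element $\gamma_{0,H}\in H(k)$ related to $\gamma_0$. I would form $f^H=\prod_{v\in\Omega_k}f^H_v$ by choosing, at every place $v$, a function $f^H_v$ matching $f_{\mathbf{G},T,v}$ in the sense of Theorem~\ref{thm:smooth_transfer_and_fundamental_lemma}.(1), and, at the cofinitely many finite places $v$ where $f_{\mathbf{G},T,v}=\mathbbm{1}_{\mathbf{G}(\mathcal{O}_{k_v})}$ is the characteristic function of a hyperspecial maximal compact and $G$, $H$ are unramified (Remark~\ref{rmk:main}), taking $f^H_v=\mathbbm{1}_{K_H}$ by Theorem~\ref{thm:smooth_transfer_and_fundamental_lemma}.(2); this gives $f^H\in\mathcal{C}_c^\infty(H(\mathbb{A}_k))$. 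Factoring $\mathcal{O}_{\gamma_0}^\kappa(f_{\mathbf{G},T})$ over places and inserting the product formula $\prod_v\Delta_v(\gamma_{0,H},\gamma'_v)=\kappa(\mathrm{obs}(\gamma'))$ of Remark~\ref{rmk:transfer_factor}, exactly as in the derivation of \eqref{eq:summtion_after_use_transfer}, I would obtain $\mathcal{O}_{\gamma_0}^\kappa(f_{\mathbf{G},T})=\mathcal{SO}_{\gamma_{0,H}}(f^H)$ for each $\kappa$. Summing over $\kappa\in\mathfrak{K}(G_{0,\gamma_0}/k)^*$ then gives the asserted asymptotic formula.

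The main obstacle is not a new estimate --- all of the analytic input sits inside Theorem~\ref{thm:main} --- but rather the bookkeeping: checking that $inv(\gamma',\gamma)$ really is carried to $\mathrm{obs}(\gamma')$ under the torus identification above, and that the normalizations of Tamagawa measures and of the Langlands--Shelstad transfer factors are compatible so that the product formula of Remark~\ref{rmk:transfer_factor} holds with trivial constant, equivalently so that the fundamental lemma identity of Remark~\ref{rmk:fundamental_lemma_for_quasi_split} holds with $c=1$ at the unramified places for the Tamagawa normalization. I would settle both points by following Remark~\ref{rmk:stabilization_innerform} and the measure conventions of \cite{Kal} verbatim, since the corollary is exactly the endoscopic stabilization of the single stable class already isolated in Theorem~\ref{thm:main}.
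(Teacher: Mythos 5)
Your proposal is correct and follows essentially the same route as the paper's own (very brief) proof: replace $\tilde f_{\mathbf{X},T}$ by $f_{\mathbf{G},T}$ via Remark \ref{rmk:main}, pass to the quasi-split inner form and the invariant $\mathrm{obs}$ via Remark \ref{rmk:stabilization_innerform}, and then apply \cite[Lemma 9.7]{Kot86} together with Theorem \ref{thm:smooth_transfer_and_fundamental_lemma} as in the derivation of (\ref{eq:summtion_after_use_transfer}). You in fact spell out more of the bookkeeping (the torus identification $G_{0,\gamma_0}\cong G_\gamma$, the place-by-place construction of $f^H$ with the fundamental lemma at almost all places, and the transfer-factor product formula) than the paper does, which only strengthens the argument.
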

\begin{proof}
    By Remark \ref{rmk:stabilization_innerform}, for a fixed inner twisting $\psi: G_0 \rightarrow G$ in \cite[Section 6]{Kot82}, as in Remark \ref{rmk:stabilization_innerform}, the asymptotic equivalence (\ref{eq:mainthm}) turns to be 
    \begin{equation}\label{eq: N(X;f_{mathbf{X},T})_and_traceformula_inner}
     N(X;f_{\mathbf{X},T})\sim 
    \sum_{\kappa\in \mathfrak{K}(G_{0,\gamma_0}/k)^*}\mathcal{O}^{\kappa}_{\gamma_0}(f_{\mathbf{G},T})
~~~~~\text{ where }\mathcal{O}_{\gamma_0}^\kappa(f_{\mathbf{G},T})=\sum_{\substack{\gamma'\in G(\mathbb{A}_k)/\sim\\ \gamma'\sim_{st}\psi(\gamma_0)}}\kappa(\mathrm{obs}(\gamma'))\mathcal{O}_{\gamma'}(f_{\mathbf{G},T}),
    \end{equation}
    where $\mathfrak{K}(G_{0,\gamma_0}/k):=H^1(k,G_{0,\gamma_0}(\mathbb{A}_{\bar{k}})/G_{0,\gamma_0}(\bar{k}))$.
    Applying \cite[Lemma 9.7]{Kot86} and Theorem \ref{thm:smooth_transfer_and_fundamental_lemma} to the right-hand side of (\ref{eq: N(X;f_{mathbf{X},T})_and_traceformula_inner}), we have the desired result.
\end{proof}

\section{Application: $\mathrm{SL}_n$-homogeneous space}\label{sec:application}
Let $X\cong S_\gamma \backslash \mathrm{SL}_n$, where $S_\gamma$ is the centralizer of an elliptic regular element $\gamma \in \mathrm{SL}_n(k)$. 
We denote by $\chi(x)$ the characteristic polynomial of $\gamma$.
Since $\gamma$ is elliptic regular in $\mathrm{SL}_n(\mathcal{O}_k)$, the polynomial $\chi(x)$ is irreducible in $\mathcal{O}_k[x]$ and $\chi(0)=1$.
We define \[N(X,T):= N(X;f_{\mathbf{X},T})=|\{x\in\mathbf{X}(\mathcal{O}_k)\mid \norm{x}_\infty \leq T\}|,
\] where $f_{\mathbf{X},T}=\mathbbm{1}_{X(k_\infty,T)\times \prod_{v<\infty_k}\mathbf{X}(\mathcal{O}_k)}$ for the schematic closure $\mathbf{X}$ of $X$ in $\mathrm{SL}_{n,\mathcal{O}_k}$, and $\norm{\cdot}_\infty$ is the modified norm defined in (\ref{intro:eq:modified_norm}).

The main goal of this section is to formulate the asymptotic of $N(X,T)$ in terms of orbital integrals of $\mathrm{GL}_n$ under the following assumptions:
\begin{equation}\label{intro:condition_ours}
\left\{
\begin{array}{l}
     \textit{(1) $k$ and $K\left(:=k[x]/(\chi(x))\right)$ are totally real number fields};  \\
     \textit{(2) if $k\neq \mathbb{Q}$, then  $\chi(x)$ is of prime degree $n$};\\
    \textit{(3) if $k=\mathbb{Q}$, then there is \textbf{no restriction} on $\chi(x)$}.
\end{array}
\right.
\end{equation}
\begin{remark}\label{rmk:equidistribution_for_SLn}
    As recalled in Remark \ref{rmk:equidistribution_listup}, to prove that $X$ satisfies the equidistribution property, it suffices to verify that the sets $x\cdot G(k_\infty)\cap X(k_\infty,T)$ are not focused in the sense of \cite[Definition 1.14]{EMS}.
    
    Over $\mathbb{Q}$, this was verified by Eskin, Mozes, and Shah  \cite[Theorem 1.16]{EMS}.
    They worked with a slightly different group rather than $\mathrm{SL}_n$: They regarded $X$ as a homogeneous space under \[G:=\{g\in\mathrm{GL}_n\mid \det g=\pm 1 \},\text{ (see {\cite[p. 255]{EMS})}},\] and proved on p. 282 of loc. cit. that the sets $ x\cdot G(\mathbb{R})\cap X(\mathbb{R},T_n)$ are not focused.

    For a number field $k$, we apply the same to the restriction of scalars
    \[X':=\mathrm{Res}_{k/\mathbb{Q}}X\text{ and } G':=\mathrm{Res}_{k/\mathbb{Q}}G.\]
    For the stabilizer $H_x$ of $x$ in $\mathbf{X}(\mathcal{O}_k)$, we have $X'\cong H'\backslash G'$ where $H':= \mathrm{Res}_{k/\Q}H_x$ (see Remark \ref{rmk:equidistribution_listup}).
    Since $H_x$ is an anisotropic maximal torus over $k$, $H_x'$ is anisotropic over $\Q$.
    Moreover, since $K$ and $k$ are totally real, we have $G(k_v)=G(\R)$ and $X(k_v)=X(\mathbb{R})$ for all $v\in \infty_k$ and thus the nonfocusing statement follows from the arguments in \cite[p. 282]{EMS}.
    Consequently, \cite[Theorem 1.16]{EMS} applies to $X'$ and therefore $X$ over $k$ also satisfies the equidistribution property \cite[(4.2)]{WX}.
\end{remark}

In this section, we use the following notations:
\begin{itemize}
\item
Let $K= k[x]/(\chi(x))$ with the ring of integers $\mathcal{O}_K$.
For each $v \in \Omega_k$, define $K_v = k_v[x]/(\chi(x))$ and denote its ring of integers by $\mathcal{O}_{K_v}$.

\item A field extension $K/k$ is said to be unramified if no prime ideal of $\mathcal{O}_k$ ramifies in $\mathcal{O}_K$, and ramified otherwise.

\item
For each $v \in \Omega_k$, let $B_v(\chi)$ be an index set in bijection with the irreducible factors $\chi_{v,i}$ of $\chi$ over $k_v$.
For $i \in B_v(\chi)$, we define the following notations
\[
\left\{
\begin{array}{l}
    \textit{$K_{v,i} = k_v[x]/(\chi_{v,i}(x))$ with the ring of integers $\mathcal{O}_{K_{v,i}}$};\\
    \kappa_{K_{v,i}}:\textit{ the residue field of $K_{v,i}$}.
\end{array}
\right.
\]
Then we have $K_v \cong \prod_{i \in B_v(\chi)} K_{v,i}$ and $\mathcal{O}_{K_v} \cong \prod_{i \in B_v(\chi)}\mathcal{O}_{K_{v,i}}$.
\item For a polynomial $f(x)\in F[x]$ and a field $F$, we denote by $\Delta_f \in F$ a discriminant of a polynomial. 

\item The Tamagawa measure on $X(\mathbb{A}_k)$ is given by
    \[
    m^X:=|\Delta_k|^{-\frac{1}{2}\dim X}\prod_{v\in\Omega_k}|\omega_X|_v,
    \]for an $\mathrm{SL}_n$-invariant gauge form $\omega_X$ on $X$ such that $\omega_{\mathrm{SL}_n}=\omega_X\cdot \omega_{S_\gamma}$, where $\omega_{\mathrm{SL}_n}$ and $\omega_{S_\gamma}$ are invariant gauge forms on $\mathrm{SL}_n$ and $S_\gamma$, respectively.

\item For $F= k_v$ or $\mathbb{A}_k$, let $\mathcal{O}_\delta(f)$ (resp. $\mathcal{SO}_\delta(f)$) denote the orbital integral (resp. the stable orbital integral) of $\mathrm{SL}_n$ for 
$f\in \mathcal{C}_c^\infty(\mathrm{SL}_n(F))$ at $\delta \in \mathrm{SL}_n(F)$.
\item We denote by $\mathfrak{K}(S_\gamma/k)$ the finite abelian group $H^1(k,S_\gamma(\mathbb{A}_{\bar{k}})/S_\gamma(\bar{k}))$.
\end{itemize}
\begin{remark}
    In general, the volume of $\prod_{v<\infty_k} \mathbf{X}(\mathcal{O}_{k_v})$ with respect to $\prod_{v <\infty_k} \abs{\omega_X}_v$ does not converge for a model $\mathbf{X}$ of $X$.
    To resolve this problem, Ono \cite{Ono65} introduced the convergence factors using the Artin $L$-function, presented in Definition \ref{def:tamagawameasure}. 
    However, in the case that $X\cong S_\gamma \backslash \mathrm{SL}_n$, this already converges. 
    Thus we use the notion of the Tamagawa measure without convergence factors.
\end{remark}

\subsection{Measures}\label{meas:gln}
Since $X\cong S_\gamma\backslash \mathrm{SL}_n\cong T_\gamma \backslash \mathrm{GL}_n$ where $T_\gamma$ is the centralizer of $\gamma$ in $\mathrm{GL}_n$, we also have a $\mathrm{GL}_n$-homogeneous space structure on $X$.
Here, $T_\gamma$ is isomorphic to the Weil restriction $\mathrm{R}_{K/k}\mathbb{G}_{m,K}$ of $\mathbb{G}_{m.K}$.
By Hilbert's Theorem 90, we have $H^1(k,T_{\gamma})= H^1(k_v,T_{\gamma,k_v})=1$. 
Then it follows that $H^1(k,T_{\gamma}(\mathbb{A}_{\bar{k}}))\cong \bigoplus_{v\in\Omega_k}H^1(k_v,T_{\gamma,k_v})$ is trivial.
By the arguments in Section \ref{subsec:orbit}, for $F=k, k_v$ or $\mathbb{A}_k$, we then have \[X(F) = \gamma \cdot \mathrm{GL}_n(F)\cong T_\gamma(F)\backslash \mathrm{GL}_n(F).\]
As in Remark \ref{rmk:conjugacy_and_pointsofX}, $X(F)$ is identified with the stable conjugacy class of $\gamma$ in $\mathrm{GL}_n(F)$. By the preceding discussion, this stable conjugacy class in fact coincides with the rational conjugacy class.

Let $\omega_X^{\mathrm{GL}_n}$ be a $\mathrm{GL}_n$-invariant gauge form such that $\omega_{\mathrm{GL}_n}=\omega_X^{\mathrm{GL}_n}\cdot \omega_{T_\gamma}$ where $\omega_{\mathrm{GL}_n}$ and $\omega_{T_\gamma}$ are invariant gauge forms on $\mathrm{GL}_n$ and $T_\gamma$, respectively.
Then the following lemma yields that
\begin{equation}\label{eq:tamagawa_measure_in_SLn}
m^X=|\Delta_k|^{-\frac{1}{2}\dim X}\prod_{v\in\Omega_k}|\omega_X^{\mathrm{GL}_n}|_v.
\end{equation}
\begin{lemma}\label{lem:uniquetamagawa}
    The Tamagawa measure on $X$ is independent of the choice of a gauge form $\omega_X$.
\end{lemma}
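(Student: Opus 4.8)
The plan is to reduce the statement to the product formula for $k$. In the definition of $m^X$ the dependence on $\omega_X$ enters only through the local measures $\abs{\omega_X}_v$, while the factor $\abs{\Delta_k}^{-\frac{1}{2}\dim X}$ (and, in the general normalization of Definition \ref{def:tamagawameasure}, the convergence factors $\lambda_v^X$ and $r_X$) is determined by $X$ alone. Hence it suffices to prove that $\prod_{v\in\Omega_k}\abs{\omega_X}_v=\prod_{v\in\Omega_k}\abs{\omega_X'}_v$ as measures on $X(\mathbb{A}_k)$ whenever $\omega_X$ and $\omega_X'$ are two $\mathrm{SL}_n$-invariant gauge forms on $X$.

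First I would show that $\omega_X$ and $\omega_X'$ differ by a constant in $k^\times$. Since both are nowhere-vanishing sections of $\bigwedge^{\dim X}\Omega_{X/k}$, their ratio is a unit $f\in k[X]^\times$, and $f$ is $\mathrm{SL}_n$-invariant because both forms are. Now $X\cong S_\gamma\backslash\mathrm{SL}_n$ is geometrically irreducible and reduced, and $\mathrm{SL}_n(\bar k)$ acts transitively on $X(\bar k)$, so every $\mathrm{SL}_n(\bar k)$-invariant regular function on $X_{\bar k}$ is constant; since the formation of invariants commutes with the flat base change $k\to\bar k$, this gives $k[X]^{\mathrm{SL}_n}=k$, and therefore $f=c$ for some $c\in k^\times$ and $\omega_X'=c\,\omega_X$.

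It follows that $\abs{\omega_X'}_v=\abs{c}_v\,\abs{\omega_X}_v$ for every $v\in\Omega_k$, with $\abs{c}_v=1$ for all but finitely many $v$. The products $\prod_v\abs{\omega_X}_v$ and $\prod_v\abs{\omega_X'}_v$ converge absolutely in the present situation (as recalled in the remark above), so we may rearrange freely, and the product formula $\prod_{v\in\Omega_k}\abs{c}_v=1$ yields $\prod_v\abs{\omega_X'}_v=\prod_v\abs{\omega_X}_v$; this proves the lemma. In particular, the $\mathrm{GL}_n$-invariant gauge form $\omega_X^{\mathrm{GL}_n}$ is a fortiori $\mathrm{SL}_n$-invariant, so applying the lemma with $\omega_X^{\mathrm{GL}_n}$ in place of $\omega_X$ gives the identity (\ref{eq:tamagawa_measure_in_SLn}). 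The one step that needs a little care is the identity $k[X]^{\mathrm{SL}_n}=k$, but I anticipate no genuine obstacle there, since $X$ is an honest affine homogeneous space and the claim follows from transitivity of $\mathrm{SL}_n(\bar k)$ on $X(\bar k)$ together with reducedness of $X$.
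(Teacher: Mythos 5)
Your proof is correct, and its skeleton (ratio of two gauge forms is a unit in $k[X]^\times$, show it is a constant in $k^\times$, conclude by the product formula and the fact that the convergence factors depend only on $X$) matches the paper's; the difference lies in the key step that forces the ratio to be constant. The paper does not use invariance of the forms at all: it invokes \cite[Lemma 1.5.1]{BR} to get $k[X]^\times=k^\times$ (this rests on $X$ being a homogeneous space of $\mathrm{SL}_n$ and $\Hom(\mathrm{SL}_n,\mathbb{G}_m)=1$), so \emph{any} two gauge forms on $X$ differ by a constant in $k^\times$, whether or not they are invariant. You instead use that both forms are $\mathrm{SL}_n$-invariant, so their ratio $f$ is an invariant function, and then transitivity of $\mathrm{SL}_n(\bar k)$ on $X(\bar k)$ plus reducedness gives $k[X]^{\mathrm{SL}_n}=k$ (your base-change and rationality steps here are fine, e.g.\ evaluating at the $k$-point $\gamma$ pins the constant in $k$). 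The trade-off: the paper's route proves the lemma in the stronger form stated (independence of an arbitrary gauge form) and is a one-line citation, while yours is more self-contained but only covers invariant forms; that restriction is harmless for the intended use, since $\omega_X^{\mathrm{GL}_n}$ is in particular $\mathrm{SL}_n$-invariant, so your version still yields (\ref{eq:tamagawa_measure_in_SLn}). Also, your worry about rearranging infinite products is unnecessary: $\abs{c}_v\neq 1$ at only finitely many places, so the two local-measure products differ by the finite product $\prod_v\abs{c}_v=1$ regardless of convergence questions.
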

\begin{proof}
    It suffices to show that any gauge form on $X$ is unique up to a constant in $k^\times$.
    A gauge form $\omega \in \Omega_{X/k}^{\dim X}(X)$ defines an isomorphism $\mathcal{O}_X \cong \Omega_{X/k}^{\dim X}(X), 1 \mapsto \omega$ of locally free $\mathcal{O}_X$-modules.
    For another gauge form $\omega'$ on $X$, $\omega \circ \omega'^{-1}$ defines a nowhere-zero regular automorphism on $\mathcal{O}_X$ which corresponds to an element in the unit group of the coordinate ring $k[X]^\times$.
    Thus, we have $\omega' = c \omega$ for some $c \in k[X]^\times$. Since $X$ is a homogeneous space of $\mathrm{SL}_n$ and $\Hom(\mathrm{SL}_n, \mathbb{G}_m) = 1$, we have $k[X]^\times=k^\times$ by \cite[Lemma 1.5.1]{BR}.
\end{proof}

On the other hand, we define the following measure on $X(k_v)$ for each $v\in \Omega_k$,
\begin{equation}\label{eq:quotient_measure}
d\mu_v:=\left\{\begin{array}{l l}
     |\omega_{X_{k_v}}^{can}|_v& v\in\infty_k\\
     dg_v/dt_v&  v<\infty_k.
\end{array}
\right.
\end{equation}
\begin{itemize}
    \item For $v\in\infty_k$, $\omega_{X_{k_v}}^{can}$ is a $\mathrm{GL}_{n,k_v}$-invariant form  on $X_{k_v}$ such that $\omega_{\mathrm{GL}_{n,k_v}}^{can}=\omega_{X_{k_v}}^{can}\cdot\omega_{T_{\gamma,k_v}}^{can}$ where $\omega_{\mathrm{GL}_{n,k_v}}^{can}$ (resp. $\omega_{T_{\gamma,k_v}}^{can}$) is the invariant forms on $\mathrm{GL}_{n,k_v}$ (resp. $T_{\gamma,k_v}$) defined in \cite[Section 9]{GG99}.
\item For $v<\infty_k$, $dg_v/dt_v$ is the quotient measure on $X(k_v)\cong T_\gamma(k_v)\backslash \mathrm{GL}_n(k_v)$ where $dg_v$ (resp. $dt_v$) is the measure on $\mathrm{GL}_n(k_v)$ (resp. $T_\gamma(k_v)$) such that $vol(dg_v, \mathrm{GL}_n(\mathcal{O}_{k_v}))=1$ (resp. $vol(dt_v,T_c)=1$ for the maximal compact open subgroup $T_c$ of $T_\gamma(k_v)$).
\end{itemize}
We denote by $\mathcal{O}_{\gamma,d\mu_v}^{\mathrm{GL}_n}(f_v)$ the (stable) orbital integral of $\mathrm{GL}_n$ for $f_v \in \mathcal{C}_c^\infty(\mathrm{GL}_n(k_v))$ at $\gamma$ with respect to $d\mu_v$.
Yun \cite{Yun13} showed that the orbital integral $\mathcal{O}_{\gamma,d\mu_v}^{\mathrm{GL}_n}(\mathbbm{1}_{\mathrm{GL}_n(\mathcal{O}_{k_v})})$ is a $q_v$-polynomial with $\mathbb{Z}$-coefficients (so that it is an integer), whose leading term is expected to be $q_v^{S_v(\gamma)}$. 
Here, $S_v(\gamma)$ denotes the $\mathcal{O}_{k_v}$-module length between $\mathcal{O}_{K_v}$ and $\mathcal{O}_{k_v}[x]/(\chi(x))$, referred to as the Serre invariant.
To state the asymptotic formula for $N(X,T)$ in Theorem \ref{thm:application}, we will use the measure $d\mu_v$ for the local orbital integrals.
Accordingly, it is necessary to compare the measure $\prod_{v\in\Omega_k}|\omega_X^{\mathrm{GL}_n}|_v$ in (\ref{eq:tamagawa_measure_in_SLn}) with $\prod_{v\in\Omega_k}d\mu_v$.
\begin{lemma}\label{lem:C_meas_value}
        We have
    \[
C_{meas}:=\prod_{v\in \Omega_k}\frac{|\omega_X^{\mathrm{GL}_n}|_v}{d\mu_v}= 2^{(n-1)[k:\Q]} \frac{R_K h_K \sqrt{\abs{\Delta_K}}^{-1}}{R_k h_k \sqrt{\abs{\Delta_k}}^{-1}} \left(\prod_{i=2}^n \zeta_k(i)^{-1}\right)\left(\prod_{v<\infty_k}\frac{|\Delta_\chi|_v^{-\frac{1}{2}}}{q_v^{S_v(\gamma)}}\right),
    \]where $R_F$ is the regulator of $F$, $h_F$ is the class number of $\mathcal{O}_F$, and $\zeta_k$ is the Dedekind zeta function of $k$.
\end{lemma}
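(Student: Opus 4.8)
The plan is to compute the ratio $|\omega_X^{\mathrm{GL}_n}|_v / d\mu_v$ place by place and then take the product, using the compatibility $\omega_{\mathrm{GL}_n} = \omega_X^{\mathrm{GL}_n}\cdot\omega_{T_\gamma}$ to reduce everything to the known Tamagawa-number computations for $\mathrm{GL}_n$ and for the torus $T_\gamma \cong \mathrm{R}_{K/k}\mathbb{G}_{m,K}$. First I would fix one invariant gauge form $\omega_{\mathrm{GL}_n}$ on $\mathrm{GL}_n$ over $k$ and one invariant gauge form $\omega_{T_\gamma}$ on $T_\gamma$; since both groups are unimodular and $\omega_X^{\mathrm{GL}_n}$ is determined up to a $k^\times$-scalar by the matching condition (Lemma \ref{lem:uniquetamagawa}), the product $\prod_{v\in\Omega_k}|\omega_X^{\mathrm{GL}_n}|_v$ is independent of this scalar by the product formula, and similarly for $\omega_{\mathrm{GL}_n}$ and $\omega_{T_\gamma}$. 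The topological matching of Proposition \ref{prop:alg-top_match} gives, at each $v$, the identity $|\omega_{\mathrm{GL}_n}|_v = |\omega_X^{\mathrm{GL}_n}|_v \cdot |\omega_{T_\gamma}|_v$ as measures on $\mathrm{GL}_n(k_v) \cong X(k_v)\times T_\gamma(k_v)$ fiberwise; the same holds for the reference measures on the right of \eqref{eq:quotient_measure}, namely $dg_v = d\mu_v \cdot dt_v$ for $v<\infty_k$ and the analogous statement at $v\in\infty_k$ with the forms of \cite[Section 9]{GG99}. Hence
\[
\frac{|\omega_X^{\mathrm{GL}_n}|_v}{d\mu_v} = \frac{|\omega_{\mathrm{GL}_n}|_v / \,dg_v}{|\omega_{T_\gamma}|_v / \,dt_v} \qquad (v<\infty_k),
\]
and an analogous ratio at the infinite places. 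This splits the problem into a $\mathrm{GL}_n$-part and a torus-part.

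Next I would assemble the global products. For the $\mathrm{GL}_n$-part: at finite $v$ one has $\mathrm{vol}(|\omega_{\mathrm{GL}_n}|_v, \mathrm{GL}_n(\mathcal{O}_{k_v})) = q_v^{-\dim\mathrm{GL}_n}\,|\mathrm{GL}_n(\kappa_v)|$ for $v$ where $\mathrm{GL}_n$ has good reduction, which produces the Euler factors of $\prod_{i=1}^n \zeta_k(i)$ (the $i=1$ factor being a pole, handled by the fact that $\mathrm{GL}_n$ has a nontrivial central character — this is exactly where the normalization of $dg_v$ by $\mathrm{vol}(dg_v,\mathrm{GL}_n(\mathcal{O}_{k_v}))=1$ absorbs the divergence and one is effectively computing a ratio that converges); the finitely many bad places and the discriminant power $|\Delta_k|^{?}$ come from the standard Tamagawa-measure bookkeeping, while the archimedean factors give the $\Gamma$-function and power-of-$2$ and power-of-$\pi$ contributions appearing in $C_T$ (recorded here only through the combination that survives after dividing by the torus-part). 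For the torus-part: since $T_\gamma = \mathrm{R}_{K/k}\mathbb{G}_{m,K}$, the product $\prod_v |\omega_{T_\gamma}|_v / dt_v$ is governed by the analytic class number formula for $K$ and for $k$ — it yields precisely the factor $\dfrac{R_K h_K \sqrt{|\Delta_K|}^{-1}}{R_k h_k \sqrt{|\Delta_k|}^{-1}}$ together with the Dedekind zeta adjustments, via $\mathrm{vol}(|\omega_{T_\gamma}|_v, T_\gamma(\mathcal{O}_{k_v})) = |\Delta_{K_v/k_v}|_v^{1/2}\cdot(\text{unit-group volume})$ and the identity $\prod_{v<\infty_k}|\Delta_{K_v/k_v}|_v^{1/2} = \sqrt{|\Delta_K|}\,/\,\sqrt{|\Delta_k|}^{[K:k]}$ up to the rational primes. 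The key local input relating the torus integral at finite $v$ to the Serre invariant is that the maximal compact $T_c \subset T_\gamma(k_v)$ is $\mathcal{O}_{K_v}^\times$, whereas the $\mathcal{O}_{k_v}$-order $\mathcal{O}_{k_v}[x]/(\chi(x))$ contributes the discrepancy $q_v^{S_v(\gamma)}$ between $\mathcal{O}_{K_v}$ and this order, together with $|\Delta_\chi|_v^{1/2}$; here one uses that $\mathrm{ord}_v(\Delta_\chi) = \mathrm{ord}_v(\Delta_{K_v/k_v}) + 2 S_v(\gamma)$, the defining relation of the Serre invariant.

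Putting the two parts together, the $\sqrt{|\Delta_K|}$ and $\sqrt{|\Delta_k|}$ powers, the $R_F h_F$ factors, the Dedekind zeta factors $\prod_{i=2}^n \zeta_k(i)^{-1}$, the powers of $2$, and the finite-place correction $\prod_{v<\infty_k} |\Delta_\chi|_v^{-1/2}/q_v^{S_v(\gamma)}$ all emerge, which is the claimed formula. The main obstacle I anticipate is the bookkeeping at the bad finite places and at the archimedean places: one must check that the local measure ratios at places of bad reduction for $\mathrm{GL}_n$ (there are none, since $\mathrm{GL}_n$ is smooth over $\mathbb{Z}$) and, more to the point, at places where $\chi$ is not separable mod $\pi_v$ or $K/k$ ramifies, combine correctly so that the product converges and contributes exactly the stated factors — in particular verifying the relation $\mathrm{ord}_v(\Delta_\chi) = \mathrm{ord}_v(\Delta_{K_v/k_v}) + 2S_v(\gamma)$ and tracking the finitely many $\zeta_k(1)$-type divergences that must cancel between numerator and denominator. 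The archimedean computation, matching $|\omega_X^{\mathrm{GL}_n}|_v$ against $|\omega_{X_{k_v}}^{can}|_v$ of \cite[Section 9]{GG99}, is routine once the normalizations of \cite{GG99} are recalled, and produces the $\Gamma$-factors and powers of $\pi$ and $2$ that get folded into $C_T$ in Theorem \ref{thm:application}; the detailed verification is deferred to Appendix \ref{sec:appendix_A}.
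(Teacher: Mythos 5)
Your overall strategy is sound and, at the level of ingredients, coincides with the paper's: both arguments reduce $C_{meas}$ to (i) residue-field point counts $|\mathrm{GL}_n(\kappa_v)|q_v^{-n^2}$ versus $|T_{\gamma,\mathcal{O}_{k_v}}(\kappa_v)|q_v^{-n}$, (ii) the local discriminants $\Delta_{K_{v,i}/k_v}$ converted into $|\Delta_\chi|_v^{-1/2}q_v^{-S_v(\gamma)}$ via the relation $\mathrm{ord}_v(\Delta_\chi)=\mathrm{ord}_v(\Delta_{K_v/k_v})+2S_v(\gamma)$ (the paper quotes this as \cite[Proposition 2.5]{CKL} together with \cite{Yun13}), and (iii) the analytic class number formula, which enters as the regularized ratio $\zeta_K(s)/\zeta_k(s)|_{s=1}$ — note that the divergent $\zeta_k(1)$ comes from the $i=1$ Euler factor of the $\mathrm{GL}_n$ point count, not from the torus side, so the cancellation you invoke is between your two parts, exactly as you anticipate. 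Where you genuinely diverge from the paper is in how the global gauge-form ambiguity is handled: the paper interposes the canonical measures of \cite{Gro97} and \cite{GG99} (see (\ref{eq:measrue_com_local})) and then invokes the Gan--Gross theorem $\prod_v |\omega^{can}_{G_{k_v}}|_v/|\omega_G|_v=f(M_G)^{1/2}$, so that the entire discrepancy between the chosen gauge forms and the reference measures is packaged, globally and in one stroke, as the conductor $\prod_{v<\infty_k}\prod_i|\Delta_{K_{v,i}/k_v}|^{-1/2}$ of the torus motive (the $\mathrm{GL}_n$ motive contributing $1$). You instead propose to compute $|\omega_{\mathrm{GL}_n}|_v/dg_v$ and $|\omega_{T_\gamma}|_v/dt_v$ directly for explicit global forms and to rely on the product formula; this is the classical route through the Tamagawa-measure computation for $\mathrm{R}_{K/k}\mathbb{G}_m$, and it can be made to work, at the cost of doing by hand the lattice/discriminant bookkeeping (including the fact that $\mathcal{O}_K$ need not be free over $\mathcal{O}_k$) that the conductor formula does for you.

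The one point you must repair is the archimedean contribution. You assert twice that the infinite places produce $\Gamma$-factors and powers of $\pi$ that "get folded into $C_T$"; but the constant $C_{meas}$ being proved contains no such factors, and in the paper they arise in an entirely separate computation, namely $vol(d\mu_\infty,X(k_\infty,T))$ in Lemma \ref{lem:infinite_place_calculation}, not in the ratio $|\omega_X^{\mathrm{GL}_n}|_v/d\mu_v$. Since $d\mu_v=|\omega^{can}_{X_{k_v}}|_v$ at $v\in\infty_k$ by (\ref{eq:quotient_measure}), what your proof actually needs is that the archimedean ratios $|\omega_{\mathrm{GL}_n}|_v/|\omega^{can}_{\mathrm{GL}_n,k_v}|_v$ and $|\omega_{T_\gamma}|_v/|\omega^{can}_{T_{\gamma},k_v}|_v$, for your chosen global gauge forms, contribute nothing beyond what the product formula already accounts for — this is precisely the content of the cited global statement \cite[Corollary 7.3 and Proposition 9.3]{GG99}, whose conductor has no archimedean component, and it is the one step you both defer and mischaracterize. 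Pin down the \cite{GG99} archimedean normalization (or simply invoke their global formula as the paper does) and delete the expectation of $\Gamma$- and $\pi$-factors here; with that correction, and with the convergence of $\prod_v$ interpreted as the limit of partial products (as the paper implicitly does), your argument closes.
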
\noindent
For the proof of Lemma \ref{lem:C_meas_value}, see Appendix \ref{app:meas_diff}.
\subsection{Computation}
Firstly, we investigate the structure of $\mathfrak{K}(S_\gamma/k)$ under the assumption that $\deg (K/k)$ is prime. 
This is necessary to describe endoscopic groups of $G$.
\begin{lemma}
We have
\[
\mathfrak{K}(S_\gamma/k)^*\cong \left\{\begin{array}{l l}
     \mathbb{Z}/n\mathbb{Z}&\text{if $K/k$ is a Galois extension};  \\
     1 & \text{otherwise}.
\end{array}\right.
\]
\end{lemma}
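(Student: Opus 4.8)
The plan is to compute $\mathfrak{K}(S_\gamma/k) = H^1(k, S_\gamma(\mathbb{A}_{\bar k})/S_\gamma(\bar k))$ via Tate--Nakayama duality and then dualize. By the discussion in Remark \ref{rmk:identification_of_cohomology}, $\mathfrak{K}(S_\gamma/k)$ is dual to $(\pi_1(S_\gamma)_\Gamma)_{tors}$, and also to $\Br X/\Br k \cong \Pic S_\gamma$; equivalently $\mathfrak{K}(S_\gamma/k)^* \cong \pi_0(\widehat{S_\gamma}^\Gamma) \cong H^1(k, X_*(\widehat{S_\gamma})) = H^1(k, X^*(S_\gamma))$. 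So the first step is to identify the $\Gamma$-module $X^*(S_\gamma)$ (equivalently the cocharacter lattice $X_*(S_\gamma)$) explicitly. Since $S_\gamma \subset \mathrm{SL}_n$ is the centralizer of the elliptic regular element $\gamma$ whose characteristic polynomial $\chi$ is irreducible, $T_\gamma \cong \mathrm{R}_{K/k}\mathbb{G}_m$ and $S_\gamma = \ker(\mathrm{Nm}: \mathrm{R}_{K/k}\mathbb{G}_m \to \mathbb{G}_m)$. Writing $\Sigma = \mathrm{Hom}_k(K,\bar k)$, a set of size $n$ carrying a transitive $\Gamma$-action, we get $X^*(T_\gamma) = \mathbb{Z}[\Sigma]$ and $X^*(S_\gamma) = \mathbb{Z}[\Sigma]/\mathbb{Z}\cdot(\sum_{\sigma}\sigma)$, the quotient of the permutation module by the diagonal. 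Thus $\mathfrak{K}(S_\gamma/k)^* \cong H^1(\Gamma, \mathbb{Z}[\Sigma]/\mathbb{Z})$, and by the long exact sequence attached to $0 \to \mathbb{Z} \to \mathbb{Z}[\Sigma] \to \mathbb{Z}[\Sigma]/\mathbb{Z} \to 0$, together with $H^1(\Gamma,\mathbb{Z}[\Sigma]) = 0$ (Shapiro: $H^1(\Gamma_K,\mathbb{Z}) = \mathrm{Hom}(\Gamma_K,\mathbb{Z}) = 0$) and $H^2(\Gamma,\mathbb{Z}) \hookrightarrow H^2(\Gamma,\mathbb{Z}[\Sigma])$ needing analysis, one reduces to a computation with $H^1(\Gamma,\mathbb{Z}[\Sigma]/\mathbb{Z}) = \mathrm{coker}\big(H^0(\Gamma,\mathbb{Z}[\Sigma]) \to H^0(\Gamma,\mathbb{Z})\big)$... more carefully, to $\ker\big(H^2(\Gamma,\mathbb{Z}) \to H^2(\Gamma,\mathbb{Z}[\Sigma])\big)$.

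Concretely, the second step is to run the cohomology sequence. Let $L$ be the Galois closure of $K/k$ with group $\mathcal{G} = \mathrm{Gal}(L/k)$ and $\mathcal{H} = \mathrm{Gal}(L/K)$, so $\Sigma = \mathcal{G}/\mathcal{H}$ as a $\mathcal{G}$-set and everything factors through $\mathcal{G}$. From $0 \to \mathbb{Z} \to \mathbb{Z}[\mathcal{G}/\mathcal{H}] \to M \to 0$ (with $M = X^*(S_\gamma)$) we get
\[
0 \to \mathbb{Z} \to \mathbb{Z} \to M^{\mathcal{G}} \to H^1(\mathcal{G},\mathbb{Z}) \to H^1(\mathcal{G},\mathbb{Z}[\mathcal{G}/\mathcal{H}]) \to H^1(\mathcal{G},M) \to H^2(\mathcal{G},\mathbb{Z}) \to H^2(\mathcal{G},\mathbb{Z}[\mathcal{G}/\mathcal{H}]).
\]
Here $H^1(\mathcal{G},\mathbb{Z}) = 0$ and $H^1(\mathcal{G},\mathbb{Z}[\mathcal{G}/\mathcal{H}]) = H^1(\mathcal{H},\mathbb{Z}) = 0$ by Shapiro, so $H^1(\mathcal{G},M) \cong \ker\big(H^2(\mathcal{G},\mathbb{Z}) \to H^2(\mathcal{G},\mathbb{Z}[\mathcal{G}/\mathcal{H}]) = H^2(\mathcal{H},\mathbb{Z})\big)$. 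Since $H^2(\mathcal{G},\mathbb{Z}) \cong H^1(\mathcal{G},\mathbb{Q}/\mathbb{Z}) = \mathrm{Hom}(\mathcal{G},\mathbb{Q}/\mathbb{Z}) = \widehat{\mathcal{G}^{ab}}$ and the map to $H^2(\mathcal{H},\mathbb{Z}) \cong \widehat{\mathcal{H}^{ab}}$ is restriction of characters, $H^1(\mathcal{G},M)$ is the group of characters of $\mathcal{G}$ trivial on $\mathcal{H}$, i.e. $\widehat{\mathcal{G}/\langle\langle\mathcal{H}\rangle\rangle}$ where $\langle\langle\mathcal{H}\rangle\rangle$ is the normal closure of $\mathcal{H}$ in $\mathcal{G}$. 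Now I invoke the hypothesis that $n = [K:k]$ is prime (condition (\ref{intro:condition_ours})(2); when $k = \mathbb{Q}$ the lemma is presumably established separately or the same argument with $n$ prime is not needed — but here I follow the stated generality where $\deg K/k$ is prime). If $K/k$ is Galois then $\mathcal{H} = 1$, $\mathcal{G}$ is cyclic of prime order $n$, and $\widehat{\mathcal{G}} \cong \mathbb{Z}/n\mathbb{Z}$. If $K/k$ is not Galois then $\mathcal{H} \neq 1$; since $[\mathcal{G}:\mathcal{H}] = n$ is prime and $\mathcal{H}$ is a proper nontrivial subgroup, $\mathcal{H}$ is not normal, and its normal closure $\langle\langle\mathcal{H}\rangle\rangle$ has index dividing $n$ in $\mathcal{G}$ but strictly greater than... actually index strictly less than $n$ and $>1$ would contradict $n$ prime unless the index is $1$; hence $\langle\langle\mathcal{H}\rangle\rangle = \mathcal{G}$ and $\widehat{\mathcal{G}/\mathcal{G}} = 1$. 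This yields exactly the dichotomy in the statement.

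The third step is bookkeeping: assemble the above into the clean statement $\mathfrak{K}(S_\gamma/k)^* \cong \mathbb{Z}/n\mathbb{Z}$ if $K/k$ is Galois and $1$ otherwise, taking care that the isomorphism $\mathfrak{K}(S_\gamma/k)^* \cong H^1(k, X^*(S_\gamma))$ is the one supplied by Remark \ref{rmk:identification_of_cohomology} (composing $\pi_0(\widehat{S_\gamma}^\Gamma) \cong H^1(k,X_*(\widehat{S_\gamma}))$ with the identification $X_*(\widehat{S_\gamma}) = X^*(S_\gamma)$), and that $H^1$ of the profinite $\Gamma$ agrees with $H^1(\mathcal{G},-)$ because $M$ is inflated from $\mathcal{G}$ and is finitely generated. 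The main obstacle is the middle step: verifying that the connecting/restriction map $H^2(\mathcal{G},\mathbb{Z}) \to H^2(\mathcal{H},\mathbb{Z})$ is indeed corestriction-dual to inclusion of characters, and pinning down its kernel as $\widehat{\mathcal{G}/\langle\langle\mathcal{H}\rangle\rangle}$ — a standard but slightly delicate point — and then the elementary group-theoretic argument that primality of $[\mathcal{G}:\mathcal{H}]$ forces $\langle\langle\mathcal{H}\rangle\rangle$ to be all of $\mathcal{G}$ in the non-Galois case. Everything else is a routine application of Shapiro's lemma and the long exact sequence.
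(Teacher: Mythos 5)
Your proposal is correct, and its endgame coincides with the paper's: both reduce to the statement that $\mathfrak{K}(S_\gamma/k)^*$ is the group of characters of $\mathrm{Gal}(L/k)$ that vanish on $\mathrm{Gal}(L/K)$, and both then use primality of $n$ in the same way (your normal-closure argument is just a repackaging of the paper's observation that $\ker\phi$ is a normal subgroup containing $\mathrm{Gal}(L/K)$ of index $1$ or $n$, and index $n$ would force $K/k$ to be Galois). The genuine difference is how that key identification is obtained: the paper simply quotes it, via Remark \ref{rmk:identification_of_cohomology} and the proof of \cite[Theorem 6.1]{WX}, as $\mathfrak{K}(S_\gamma/k)^*\cong \Pic S_\gamma\cong\ker\bigl(\Hom(\mathrm{Gal}(L/k),\Q/\Z)\to\Hom(\mathrm{Gal}(L/K),\Q/\Z)\bigr)$, whereas you re-derive it from scratch: $\mathfrak{K}(S_\gamma/k)^*\cong H^1(k,X^*(S_\gamma))$ by Tate--Nakayama, then the lattice sequence $0\to\Z\to\Z[\mathcal{G}/\mathcal{H}]\to X^*(S_\gamma)\to 0$, Shapiro's lemma, and the natural isomorphism $H^2(\cdot,\Z)\cong\Hom(\cdot,\Q/\Z)$ compatible with restriction. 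Your route is longer but self-contained (it effectively reproves the $\Pic S_\gamma$ computation of \cite{WX} at the level of character lattices), while the paper's is shorter at the cost of an external citation; the points you flag as delicate (the map $\Z\to\Z[\mathcal{G}/\mathcal{H}]$, $1\mapsto\sum_\sigma\sigma$, inducing restriction under Shapiro, and the reduction from the profinite $\Gamma$ to the finite quotient $\mathcal{G}$ via inflation--restriction) are indeed the only things to check, and both are standard.
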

\begin{proof}
By Remark \ref{rmk:identification_of_cohomology} and the proof of \cite[Theorem 6.1]{WX}, we have
    \begin{equation}\label{eq:Brgroup_when_SLn}
    \mathfrak{K}(S_\gamma/k)^*\cong \Pic S_{\gamma} \cong \ker (\Hom(\mathrm{Gal}(L/k), \Q/\Z) \to \Hom(\mathrm{Gal}(L/K), \Q/\Z)),\end{equation}
where $L$ is the Galois closure of $K/k$.
\begin{enumerate}
    \item In the case that $K/k$ is a Galois extension, then $L=K$ and so we have \[\mathfrak{K}(S_\gamma/k)^*  \cong \mathrm{Hom}(\mathrm{Gal}(K/k),\mathbb{Q}/\mathbb{Z})\cong \mathbb{Z}/n\mathbb{Z}.\]  
    Indeed, the degree $n$ is a prime number.
    \item In the case that $K/k$ is not a Galois extension, for $\xi\in \mathfrak{K}(S_\gamma/k)^*$, let $\phi\in \mathrm{Hom}(\mathrm{Gal}(L/k),\mathbb{Q}/\mathbb{Z})$ be the image of $\xi$ under the composition of isomorphisms in (\ref{eq:Brgroup_when_SLn}).
    Since $\phi$ vanishes on $\mathrm{Gal}(L/K)\subset \mathrm{Gal}(L/k)$, a normal subgroup $\ker \phi \unlhd \mathrm{Gal}(L/k)$ contains $\mathrm{Gal}(L/K)$.
    By assumption that $[K:k]=[\mathrm{Gal}(L/k):\mathrm{Gal}(L/K)]$ is prime, we have $\ker \phi=\mathrm{Gal}(L/k)$ or $\ker \phi= \mathrm{Gal}(L/K)$.
    If $\ker \phi= \mathrm{Gal}(L/K)$, then it contradicts the assumption that $K/k$ is not Galois.
    Therefore $\ker \phi=\mathrm{Gal}(L/k)$ and this direct yields that $\mathfrak{K}(S_\gamma/k)^*$ is trivial.
\end{enumerate}
\end{proof}
\begin{remark}\label{rmk:kappa_matching}
    Suppose that $K/k$ is a Galois extension.
    We recall that $\mathfrak{K}(S_\gamma/k)^*\cong \pi_0(\hat{S}_\gamma^\Gamma)=\hat{S}_\gamma^{\Gamma}$. 
    Here, via the isomorphism $\hat{S}_\gamma\cong \mathrm{Hom}(X_*(S_\gamma),\mathbb{C}^\times)$, an element of $\hat{S}_\gamma^\Gamma$ is identified with a $\Gamma$-invariant character of $X_*(S_\gamma)$.
    The $\Gamma$-action on $X_*(S_\gamma)$ factors through $\mathrm{Gal}(K/k)\cong \mathbb{Z}/n\mathbb{Z}$, which acts on $X_*(S_\gamma)\cong \mathbb{Z}^n_{\Sigma=0}$ by permuting the factors.
    Here, $\mathbb{Z}^n_{\Sigma=0}$ denotes the set of $(z_1,\cdots,z_n)\in \mathbb{Z}^n$ such that $\sum_i z_i=0$.
    
    Therefore, an element $\kappa\in \mathfrak{K}(S_\gamma/k)^*$ is identified with an character of $X_*(S_\gamma)$ satisfying that $\kappa(\sigma\alpha)=\kappa(\alpha)$ for all $\sigma\in\mathrm{Gal}(K/k)$ and $\alpha \in X_*(S_\gamma)$.
    It then follows that a $\Gamma$-invariant character is completely determined by the value at $(1,-1,0,\cdots,0)\in X_*(S_\gamma)$, which must be an $n$-th root of unity.
\end{remark}

In the case that $\kappa$ is non-trivial and the corresponding endoscopic group is unramified, we use the fundamental lemma stated in Remark \ref{rmk:fundamental_lemma_for_quasi_split}.
The following lemma describes the normalization of the restriction of $\mu_v$ to each $\mathrm{SL}_n(k_v)$-orbit $\gamma'\cdot \mathrm{SL}_n(k_v)$ in $X(k_v)$, which is required for the application of the fundamental lemma.
\begin{lemma}\label{lem:measure_comparison_gln_sln}
    Fix a finite place $v$ of $k$ such that $K_{v,i}/k_v$ is unramified extension for each $i\in B_v(\chi)$.
    For $\gamma'\in X(k_v)$, let $S_{\gamma'}$ be the centralizer of $\gamma'$ in $\mathrm{SL}_n$.
    For an $\mathrm{SL}_n(k_v)$-orbit $\gamma'\cdot \mathrm{SL}_n(k_v)\cong S_{\gamma'}(k_v)\backslash \mathrm{SL}_n(k_v)$ in $X(k_v)$, we have
    \[
    \left.\mu_v\right|_{\gamma'\cdot \mathrm{SL}_n(k_v)}=\frac{dh_v}{ds_v}
    \]
    where $dh_v$ (resp. $ds_v$) denotes a Haar measure on $\mathrm{SL}_n(k_v)$ (resp. $S_{\gamma'}(k_v)$) such that $vol(dh_v,\mathrm{SL}_n(\mathcal{O}_{k_v}))=1$ (resp. $vol(ds_v,S_c')=1$ for the maximal compact subgroup $S_c'$ of $S_{\gamma'}(k_v)$).
\end{lemma}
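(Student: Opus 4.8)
The plan is to compare the two quotient–measure constructions by passing through the determinant and norm fibrations, after first reducing to the base orbit $\gamma'=\gamma$. First I would note that both $\mu_v|_{\gamma'\cdot\mathrm{SL}_n(k_v)}$ and $dh_v/ds_v$ are nonzero, right $\mathrm{SL}_n(k_v)$–invariant Radon measures on $\gamma'\cdot\mathrm{SL}_n(k_v)\cong S_{\gamma'}(k_v)\backslash\mathrm{SL}_n(k_v)$: the orbit is open in $X(k_v)$, being a fibre of the locally constant map $X(k_v)\to k_v^\times/\Nm_{K_v/k_v}(K_v^\times)$ induced by $\det$ (the subgroup $\Nm_{K_v/k_v}(K_v^\times)$ is open, since it contains $\Nm_{K_v/k_v}(\mathcal{O}_{K_v}^\times)=\mathcal{O}_{k_v}^\times$ by the unramifiedness hypothesis), and $\mathrm{SL}_n(k_v)$, $S_{\gamma'}(k_v)$ are unimodular; hence the two measures differ by a positive constant $c(\gamma')$, and it remains to show $c(\gamma')=1$. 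To reduce to $\gamma'=\gamma$, write $\gamma'=\gamma\cdot g_0$ with $g_0\in\mathrm{GL}_n(k_v)$: right translation $R_{g_0}$ on $X(k_v)$ preserves $\mu_v$, maps $\gamma\cdot\mathrm{SL}_n(k_v)$ onto $\gamma'\cdot\mathrm{SL}_n(k_v)$, and under the orbit identifications is the map induced by the automorphism $c_{g_0^{-1}}\colon h\mapsto g_0^{-1}hg_0$ of $\mathrm{SL}_n(k_v)$; since $\mathrm{Ad}(g_0)$ — being left multiplication by $g_0$ composed with right multiplication by $g_0^{-1}$ — has determinant $1$ on $\mathfrak{gl}_n$ and fixes the centre, it has determinant $1$ on $\mathfrak{sl}_n$, so $c_{g_0^{-1}}$ preserves $dh_v$, and it restricts to an isomorphism $S_\gamma(k_v)\xrightarrow{\sim}S_{\gamma'}(k_v)$ matching maximal compact subgroups, so it carries $ds_v$ to $ds_v$. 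Therefore $R_{g_0}$ pushes $dh_v/ds_v$ on the $\gamma$–orbit to $dh_v/ds_v$ on the $\gamma'$–orbit, and $c(\gamma')=c(\gamma)$.

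For the base orbit, under the orbit map $p\colon\mathrm{GL}_n(k_v)\to X(k_v)$, $g\mapsto\gamma\cdot g$, one has $p^{-1}(\gamma\cdot\mathrm{SL}_n(k_v))=T_\gamma(k_v)\mathrm{SL}_n(k_v)=:G'$, because $T_\gamma(k_v)$ centralizes $\gamma$; consequently $\mu_v|_{\gamma\cdot\mathrm{SL}_n(k_v)}$ is exactly the quotient measure $dg_v/dt_v$ on $T_\gamma(k_v)\backslash G'$, with $dg_v$ restricted to the open subgroup $G'=\det^{-1}(N_v)$, where $N_v:=\Nm_{K_v/k_v}(K_v^\times)$. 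The key step is to decompose $dg_v|_{G'}$ along $\det\colon G'\to N_v$ (fibre $\mathrm{SL}_n(k_v)$) and $dt_v$ along $\Nm_{K_v/k_v}\colon T_\gamma(k_v)\to N_v$ (fibre $S_\gamma(k_v)$), and to observe that both have the \emph{same} base measure on $N_v$, namely the restriction of the Haar measure $d^\times x_v$ on $k_v^\times$ normalized by $\mathrm{vol}(\mathcal{O}_{k_v}^\times)=1$: for $dg_v|_{G'}$ because $\det(\mathrm{GL}_n(\mathcal{O}_{k_v}))=\mathcal{O}_{k_v}^\times$ with fibre $\mathrm{SL}_n(\mathcal{O}_{k_v})$ of volume $1$, and for $dt_v$ because $\Nm_{K_v/k_v}(T_c)=\Nm_{K_v/k_v}(\mathcal{O}_{K_v}^\times)=\mathcal{O}_{k_v}^\times$ with fibre $S_c$ (the maximal compact subgroup of $S_\gamma(k_v)$) of volume $1$. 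The equality $\Nm_{K_v/k_v}(\mathcal{O}_{K_v}^\times)=\mathcal{O}_{k_v}^\times$ is precisely where the hypothesis that each $K_{v,i}/k_v$ is unramified is used, via surjectivity of the norm on units of an unramified extension of local fields. It follows that $dg_v|_{G'}/dh_v$ on $\mathrm{SL}_n(k_v)\backslash G'\cong N_v$ and $dt_v/ds_v$ on $S_\gamma(k_v)\backslash T_\gamma(k_v)\cong N_v$ both coincide with $d^\times x_v|_{N_v}$.

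To finish, I would use that the two natural maps $S_\gamma(k_v)\backslash G'\to T_\gamma(k_v)\backslash G'$ and $S_\gamma(k_v)\backslash G'\to\mathrm{SL}_n(k_v)\backslash G'\cong N_v$ together give a homeomorphism $S_\gamma(k_v)\backslash G'\cong(T_\gamma(k_v)\backslash G')\times N_v$. Applying the standard integration formula for quotient measures (as in \cite[Section 2]{Wei82}) to the chain $S_\gamma(k_v)\subset T_\gamma(k_v)\subset G'$ exhibits $dg_v|_{G'}/ds_v$ as the product of $dg_v|_{G'}/dt_v$ on the factor $T_\gamma(k_v)\backslash G'$ with $dt_v/ds_v=d^\times x_v|_{N_v}$ on the factor $N_v$; applying it to $S_\gamma(k_v)\subset\mathrm{SL}_n(k_v)\subset G'$ exhibits the same measure, in the same coordinates, as the product of $dh_v/ds_v$ on $S_\gamma(k_v)\backslash\mathrm{SL}_n(k_v)=T_\gamma(k_v)\backslash G'$ with $dg_v|_{G'}/dh_v=d^\times x_v|_{N_v}$ on $N_v$. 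Equating these two product measures and cancelling the common nonzero factor $d^\times x_v|_{N_v}$ gives $dg_v|_{G'}/dt_v=dh_v/ds_v$ on $T_\gamma(k_v)\backslash G'=S_\gamma(k_v)\backslash\mathrm{SL}_n(k_v)$, which is the assertion for $\gamma'=\gamma$; together with the reduction in the first paragraph this proves the lemma. I expect the main obstacle to be the bookkeeping of Haar normalizations through the $\det$– and $\Nm$–fibrations in the middle step, and in particular checking carefully that the base measures on $N_v$ agree — this is exactly the point at which unramifiedness enters (a ramified place would instead introduce the index $[\mathcal{O}_{k_v}^\times:\Nm_{K_v/k_v}(\mathcal{O}_{K_v}^\times)]$ as a correction factor).
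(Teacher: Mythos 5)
Your proof is correct, but it takes a genuinely different route from the paper. The paper's argument passes through Gross's canonical volume forms: it reduces to $\gamma'=\gamma$ via $T_{\gamma'}\cong T_\gamma$, proves the wedge-product identity $\omega^{can}_{\mathrm{SL}_n}=\varphi_{\mathrm{SL}_n}^*\omega_X^{can}\wedge\tilde\omega_{S_\gamma}^{can,\mathrm{SL}_n}$ (up to $\mathcal{O}_{k_v}^\times$) by comparing $\omega^{can}_{\mathrm{GL}_n}$ and $\omega^{can}_{T_\gamma}$ with $\det^*\omega^{can}_{\mathbb{G}_m}$ and $\Nm^*\omega^{can}_{\mathbb{G}_m}$ — here unramifiedness enters through smoothness of $\Nm_{\mathcal{O}_{K_v}/\mathcal{O}_{k_v}}$ over $\mathcal{O}_{k_v}$ — and then converts canonical measures into the unit-normalized measures $d\mu_v$ and $dh_v/ds_v$ by residue-field point counts as in (\ref{eq:measrue_com_local}) and (\ref{eq:volumeform_quotient_SLn}). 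You instead never touch canonical forms: you reduce to $\gamma'=\gamma$ by right translation/conjugation (using $|\det\mathrm{Ad}(g_0)|_v=1$ on $\mathfrak{sl}_n$ and uniqueness of the maximal compact of a $p$-adic torus), identify $\mu_v$ on the base orbit with $dg_v|_{G'}/dt_v$ for $G'=\det^{-1}(N_v)$, and fibre both $dg_v|_{G'}$ and $dt_v$ over the common base $N_v=\Nm_{K_v/k_v}(K_v^\times)$ via $\det$ and $\Nm$, using surjectivity of the norm on units of an unramified extension to see that both base measures are the $d^\times x$ normalized by $vol(\mathcal{O}_{k_v}^\times)=1$; cancelling the base factor gives the claim. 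This is more elementary and makes the role of the unramifiedness hypothesis (and the index $[\mathcal{O}_{k_v}^\times:\Nm(\mathcal{O}_{K_v}^\times)]$ correction at ramified places) completely transparent, while the paper's route has the advantage of slotting into the canonical-measure bookkeeping it already needs for Lemma \ref{lem:C_meas_value}. The only compressed step in your write-up is the assertion that both disintegrations are literally product measures in the coordinates $(T_\gamma(k_v)\backslash G')\times N_v$: for the chain $S_\gamma\subset T_\gamma\subset G'$ this needs that the fibre measures, read in the $N_v$-coordinate, are independent of the base point (translation invariance of $d^\times x$ on $N_v$), and for $S_\gamma\subset\mathrm{SL}_n\subset G'$ it needs that $dh_v/ds_v$, transported to $T_\gamma(k_v)\backslash G'$, is invariant under right translation by $T_\gamma(k_v)$ — which holds because conjugation by $t\in T_\gamma(k_v)$ preserves $dh_v$ and fixes $S_\gamma$ pointwise; both checks are routine, so the argument stands.
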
\noindent
For the proof of Lemma \ref{lem:measure_comparison_gln_sln}, see Appendix \ref{app:meas_diff}.

\begin{theorem}\label{thm:application}
Let $\chi(x)\in\mathcal{O}_k[x]$ be an irreducible polynomial such that $\chi(0)=1$.
Let $\mathbf{X}$ be an $\mathcal{O}_k$-scheme representing the set of $n\times n$ integral matrices whose characteristic polynomial is $\chi(x)$ and $X:=\mathbf{X}\otimes_{\mathcal{O}_k}k$.
We define 
\[N(X, T)=|\{x\in \mathbf{X}(\mathcal{O}_k)\mid \norm{x}_\infty\leq T\}|,\]
for $T>0$, where the norm $\norm{\cdot}_\infty$ is defined in (\ref{intro:eq:modified_norm}).
Under the condition (\ref{intro:condition_ours}), we have the following asymptotic formulas.
\begin{enumerate}
        \item If $K/k$ is not Galois or ramified Galois, then
        \[ N(X, T) \sim C_T\prod_{v<\infty_k}\frac{\mathcal{O}_{\gamma,d\mu_v}^{\mathrm{GL}_n}(\mathbbm{1}_{\mathrm{GL}_n(\mathcal{O}_{k_v})})}{q_v^{S_v(\gamma)}}.
        \]
    
        \item If $K/k$ is unramified Galois, then
        \[N(X, T) \sim C_T \left(\prod_{v<\infty_k}\frac{\mathcal{O}_{\gamma,d\mu_v}^{\mathrm{GL}_n}(\mathbbm{1}_{\mathrm{GL}_n(\mathcal{O}_{k_v})})}{q_v^{S_v(\gamma)}}+n-1\right).\]
\end{enumerate}\noindent
Here, $\mathcal{O}_{\gamma,d\mu_v}^{\mathrm{GL}_n}(\mathbbm{1}_{\mathrm{GL}_n(\mathcal{O}_{k_v})})$ denotes the orbital integral of $\mathrm{GL}_n$ for $\mathbbm{1}_{\mathrm{GL}_n(\mathcal{O}_{k_v})}$ at $\gamma$ with respect to the measure $d\mu_v$ defined in (\ref{eq:quotient_measure}), $S_v(\gamma)$ is the $\mathcal{O}_{k_v}$-module length between $\mathcal{O}_{K_v}$ and $\mathcal{O}_{k_v}[x]/(\chi(x))$, 
    \[C_T:= |\Delta_k|^{\frac{-n^2+n}{2}} \frac{R_K h_K \sqrt{\abs{\Delta_K}}^{-1}}{R_k h_k \sqrt{\abs{\Delta_k}}^{-1}} \left(\prod_{i=2}^n \zeta_k(i)^{-1}\right)\left( \frac{2^{n-1}w_n\pi^{\frac{n(n+1)}{4}} }{\prod_{i=1}^n \Gamma(\frac{i}{2})} T^{\frac{n(n-1)}{2}}  \right)^{[k:\Q]},\]
    and we use the following notations:
    \begin{itemize}
        \item $R_F$ is the regulator of $F$, and $h_F$ is the class number of $\mathcal{O}_F$ for $F=k$ or $K$. 
        \item $w_n$ is the volume of the unit ball in $\mathbb{R}^{\frac{n(n-1)}{2}}$, and $\zeta_k$ is the Dedekind zeta function of $k$.
    \end{itemize}
 \end{theorem}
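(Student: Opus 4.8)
The plan is to feed the $\mathrm{SL}_n$-homogeneous space $X\cong S_\gamma\backslash\mathrm{SL}_n$ into Corollary \ref{cor:main_cor} (equivalently Theorem \ref{thm:main}) and then to evaluate the resulting sum place by place. Since $\mathrm{SL}_n$ is split it is its own quasi-split inner form, so $\gamma_0=\gamma$ and the sum runs over $\kappa\in\mathfrak{K}(S_\gamma/k)^{*}$, which by the lemma immediately preceding Remark \ref{rmk:kappa_matching} is trivial when $K/k$ is not Galois and is $\widehat{\mathrm{Gal}(K/k)}$, of order $n$, when $K/k$ is Galois. Because $\norm{\cdot}_\infty$ is a maximum, $X(k_\infty,T)=\prod_{v\in\infty_k}X(k_v,T)$, so $\tilde f_{\mathbf X,T}$ restricted to $X(\mathbb A_k)$ factors as $\prod_v f_{\mathbf X,T,v}$ with $f_{\mathbf X,T,v}=\mathbbm 1_{X(k_v,T)}$ at $v\in\infty_k$ and $f_{\mathbf X,T,v}=\mathbbm 1_{\mathbf X(\mathcal O_{k_v})}$ at $v<\infty_k$; correspondingly $m^X=|\Delta_k|^{-\frac12\dim X}\prod_v|\omega^{\mathrm{GL}_n}_X|_v$ with $\dim X=n^2-n$ (giving the factor $|\Delta_k|^{(-n^2+n)/2}$ in $C_T$), and each $\mathcal O_\gamma^{\kappa}(\tilde f_{\mathbf X,T})=\prod_v\mathcal O_\gamma^{\kappa_v}(f_{\mathbf X,T,v})$ is a product of local $\kappa_v$-orbital integrals.

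\emph{The principal term $\kappa=1$.} Here $\mathcal O_\gamma^{1}(\tilde f_{\mathbf X,T})=\sum_{\gamma'\sim_{st,\mathbb A}\gamma}\mathcal O_{\gamma'}(\tilde f_{\mathbf X,T})=\int_{X(\mathbb A_k)}f_{\mathbf X,T}\,m^X$, and since $H^1(k,T_\gamma)=H^1(k_v,T_\gamma)=1$ (Section \ref{meas:gln}) the whole adelic stable class is a single $\mathrm{GL}_n(\mathbb A_k)$-orbit, so this equals the $\mathrm{GL}_n$-orbital integral $\mathcal O_\gamma^{\mathrm{GL}_n}$ taken with respect to $\prod_v|\omega^{\mathrm{GL}_n}_X|_v$. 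I would then use Lemma \ref{lem:C_meas_value} to pass to $\prod_v d\mu_v$ (this is where $R_Kh_K$, $R_kh_k$, $\sqrt{|\Delta_K|}$, $\sqrt{|\Delta_k|}$, $\prod_{i=2}^n\zeta_k(i)^{-1}$, the powers of $2$, and the factors $q_v^{S_v(\gamma)}$ enter), so that the finite part becomes $\prod_{v<\infty_k}\mathcal O^{\mathrm{GL}_n}_{\gamma,d\mu_v}(\mathbbm 1_{\mathrm{GL}_n(\mathcal O_{k_v})})/q_v^{S_v(\gamma)}$, a convergent product by Yun \cite{Yun13} with almost all factors equal to $1$. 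The archimedean part $\prod_{v\in\infty_k}\operatorname{vol}(d\mu_v,X(k_v,T))$ is computed from the canonical forms of \cite{GG99}: $X(k_v)$ (with $k_v=\mathbb R$ and $\chi$ totally real) is the single $\mathrm{GL}_n(\mathbb R)$-orbit of a regular real diagonal matrix, and a polar-coordinate computation for the Frobenius norm — a variant of \cite[Theorem 1.1 and Theorem 1.16]{EMS} — gives $\operatorname{vol}(d\mu_v,X(k_v,T))\sim c_v\,T^{n(n-1)/2}$ with $c_v$ an explicit combination of $w_n$, $\pi$, the $\Gamma(i/2)$ and $|\Delta_\chi|_v^{-1/2}$. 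Collecting everything — the archimedean discriminant factors cancelling $\prod_{v<\infty_k}|\Delta_\chi|_v^{-1/2}$ in $C_{meas}$ by the product formula — yields exactly $C_T\prod_{v<\infty_k}\mathcal O^{\mathrm{GL}_n}_{\gamma,d\mu_v}(\mathbbm 1_{\mathrm{GL}_n(\mathcal O_{k_v})})/q_v^{S_v(\gamma)}$, which is the whole answer in case (1) and the first summand in case (2).

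\emph{The terms $\kappa\ne1$ (present only when $K/k$ is Galois).} Suppose first that $K/k$ ramifies at a finite place $v_0$; then the local factor $\mathcal O_\gamma^{\kappa_{v_0}}(\mathbbm 1_{\mathbf X(\mathcal O_{k_{v_0}})})$ vanishes, so the whole product does. Indeed $\mathrm{GL}_n(\mathcal O_{k_{v_0}})$ acts by conjugation on the set of $\mathrm{SL}_n(k_{v_0})$-orbits in $X(k_{v_0})$, preserving $\mathbbm 1_{\mathbf X(\mathcal O_{k_{v_0}})}=\mathbbm 1_{X(k_{v_0})\cap\mathrm{SL}_n(\mathcal O_{k_{v_0}})}$ and, in the normalization of Lemma \ref{lem:measure_comparison_gln_sln}, the orbital integrals; this action factors through $\det\colon\mathrm{GL}_n(\mathcal O_{k_{v_0}})\to\mathcal O_{k_{v_0}}^{\times}$, whose image in the orbit-parameter group $H^1(k_{v_0},S_\gamma)\cong k_{v_0}^{\times}/N_{K_{v_0,1}/k_{v_0}}(K_{v_0,1}^{\times})$ is the inertia subgroup $I_{v_0}$, nontrivial because $v_0$ ramifies. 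Hence the integral orbits form a union of $I_{v_0}$-cosets with orbital integral constant on each, and since $\kappa_{v_0}$ is nontrivial on $I_{v_0}$ for some ramified $v_0$ — which holds because, $n$ being prime if $k\ne\mathbb Q$ and always if $k=\mathbb Q$, neither $k$ nor any proper subextension of $K/k$ is everywhere unramified — averaging $\kappa_{v_0}$ over each coset gives $0$; this settles case (1), as does the absence of nontrivial $\kappa$ when $K/k$ is not Galois. If instead $K/k$ is everywhere unramified and Galois, then $k\ne\mathbb Q$, $\mathrm{Gal}(K/k)\cong\mathbb Z/n\mathbb Z$, a nontrivial $\kappa$ is injective, and by Remarks \ref{rmk:kappa_matching} and \ref{rmk:root_system_of_endoscopic_group} the coroot set $R^{\vee}(S_\gamma,H)$ is empty, so the endoscopic group $H$ is a maximal torus isomorphic to $S_\gamma$, unramified at every place; the fundamental lemma in the form of Remark \ref{rmk:fundamental_lemma_for_quasi_split}, whose left-hand side equals $\mathbbm 1_{K_H}(\gamma_H)=1$ for a torus, pins down each local factor, and after the same measure bookkeeping as in the principal term — the finite orbital-integral product now collapsing to $1$ since $H$ is a torus — each of the $n-1$ nontrivial $\kappa$ contributes exactly $C_T$; adding these to the principal term gives case (2).

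\emph{Main obstacle.} The measure comparisons (Lemmas \ref{lem:C_meas_value} and \ref{lem:measure_comparison_gln_sln}) are routine and deferred to the appendix, and the archimedean volume is essentially the computation of \cite{EMS}. The genuinely delicate part is the analysis of the $\kappa\ne1$ terms: the vanishing at a ramified place, which rests on identifying the $\det$-twisting of $\mathrm{SL}_n$-conjugacy classes with the local norm-residue symbol and on the non-existence of everywhere-unramified subextensions, and in the unramified Galois case the uniform application of the fundamental lemma over all finite places together with the identification of the endoscopic group as a maximal torus.
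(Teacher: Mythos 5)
Your route for $k\neq\mathbb{Q}$ is essentially the paper's: Theorem \ref{thm:main}, the prime-degree computation of $\mathfrak{K}(S_\gamma/k)^*$, Lemma \ref{lem:C_meas_value} plus the archimedean volume computation for the $\kappa=1$ term, vanishing of the nontrivial-$\kappa$ terms at a ramified place, and the fundamental lemma of Remark \ref{rmk:fundamental_lemma_for_quasi_split} producing one copy of $C_T$ per nontrivial $\kappa$ in the unramified Galois case. The one genuinely different ingredient is your proof of the ramified vanishing: you average over conjugation by $\mathrm{GL}_n(\mathcal{O}_{k_{v_0}})$, identifying the twist of the orbit parameter with the image of $\det$ in $k_{v_0}^\times$ modulo norms, whereas the paper simply quotes \cite[Proposition 7.5]{Kot86}. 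That substitution is acceptable in substance, but note that at a ramified place you cannot appeal to Lemma \ref{lem:measure_comparison_gln_sln} (its hypothesis is that every $K_{v,i}/k_v$ is unramified); what you actually need there is only that the restriction of the $\mathrm{GL}_n$-invariant measure to each $\mathrm{SL}_n(k_{v_0})$-orbit is transported under integral conjugation, which follows from invariance of the gauge form.

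The genuine gap is the case $k=\mathbb{Q}$. You run the $\kappa$-machinery uniformly and assert that $\mathfrak{K}(S_\gamma/k)^*$ is trivial whenever $K/k$ is not Galois, citing the structure lemma; but that lemma is proved only under the hypothesis that $[K:k]$ is prime, which is precisely the hypothesis dropped when $k=\mathbb{Q}$ (condition (3) of (\ref{intro:condition_ours})). For composite degree the claim is false: by (\ref{eq:Brgroup_when_SLn}), $\mathfrak{K}(S_\gamma/\mathbb{Q})^*$ is the group of characters of $\mathrm{Gal}(L/\mathbb{Q})$ trivial on $\mathrm{Gal}(L/K)$, and this is nontrivial, e.g., for a totally real non-Galois quartic $K$ containing a real quadratic subfield (Galois closure with group $D_4$); similarly, for composite-degree Galois $K/\mathbb{Q}$ the group has order $|\mathrm{Gal}(K/\mathbb{Q})^{\mathrm{ab}}|$, not necessarily $n$ as you state. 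So over $\mathbb{Q}$ your case (1) is incomplete: nontrivial $\kappa$ can occur, and your write-up, which declares them ``present only when $K/k$ is Galois,'' never shows their contributions vanish. The paper sidesteps this entirely by treating $k=\mathbb{Q}$ with \cite[Theorem 6.1]{WX}, which yields $N(X,T)\sim\int_{X(\mathbb{A}_{\mathbb{Q}})}f_{\mathbf{X},T}\,m^X=\mathcal{SO}_\gamma(\tilde{f}_{\mathbf{X},T})$ directly, with no $\kappa$-analysis. Your own averaging argument could be upgraded to close the gap — every nontrivial $\kappa$ cuts out a nontrivial abelian subextension of $L/\mathbb{Q}$, which ramifies somewhere by Minkowski, hence $\kappa$ is nontrivial on some inertia group — but this must be formulated for the \'etale algebra $K_{v_0}=\prod_i K_{v_0,i}$ (i.e., with $H^1(\mathbb{Q}_{v_0},S_\gamma)\cong\mathbb{Q}_{v_0}^\times/N_{K_{v_0}/\mathbb{Q}_{v_0}}(K_{v_0}^\times)$), not with a single field factor as in your sketch, and it is currently missing from your case division.
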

\begin{proof}
Let $\gamma\in\mathrm{SL}_n(\mathcal{O}_k)$ be an element whose characteristic polynomial is $\chi(x)$.
Here, the existence of such an element is guaranteed by the companion matrix of $\chi(x)$.
By the irreducibility of $\chi(x)$, the element $\gamma$ is elliptic regular, and we have $X\cong S_\gamma\backslash \mathrm{SL}_n$.
Note that $\mathbf{X}$ is the schematic closure of $X$ in $\mathrm{SL}_{n,\mathcal{O}_k}$.
To ease the notation, we set $f:=f_{\mathbf{X},T}$ and $\tilde{f}:=f_{\mathrm{SL}_{n,\mathcal{O}_k},T} $ defined in (\ref{eq:choice_of_G_integral}).
Then $\tilde{f}_v=\mathbbm{1}_{\mathrm{SL}_n(\mathcal{O}_{k_v})}$ for $v<\infty_k$.
\begin{enumerate}
    \item 
For $k=\mathbb{Q}$, by \cite[Theorem 6.1]{WX}, it follows that
\[
N(X,T)\sim \int_{X(\mathbb{A}_k)} f\ m^X =\mathcal{SO}_\gamma(\tilde{f}).
\]
Indeed, $X(\mathbb{A}_k)$ is identified with the stable conjugacy class of $\gamma$ in $\mathrm{SL}_n(\mathbb{A}_k)$ by Remark \ref{rmk:conjugacy_and_pointsofX}.
    \item
For a general number field $k$ where $\deg(K/k)$ is prime, using Theorem \ref{thm:main}, we have
    \[
    N(X,T)\sim \sum_{\kappa\in \mathfrak{K}(S_\gamma/k)^*}\mathcal{O}_{\gamma}^\kappa(\tilde{f})
~~~~~~\text{ where }~~ 
    \mathcal{O}_{\gamma}^\kappa(\tilde{f})=
    \sum_{\substack{\gamma'\in \mathrm{SL}_n(\mathbb{A}_k)/\sim\\ \gamma'\sim_{st} \gamma}}\kappa(inv(\gamma',\gamma))\mathcal{O}_{\gamma'}(\tilde{f}).
    \]
We note that, by Remark \ref{rmk:equidistribution_for_SLn}, the equidistribution property for $X$ is already satisfied.
By \cite[Section 5.7]{Kal}, under the isomorphism $H^1(k,S_\gamma(\mathbb{A}_{\bar{k}}))\cong \bigoplus_{v\in\Omega_k}H^1(k_v,S_\gamma)$, the adelic invariant $inv(\gamma',\gamma)$ corresponds to the tuple $(inv_v(\gamma_v',\gamma))_{v\in\Omega_k}$ of local invariants. 
Under the Tate-Nakayama duality, the composite map $\bigoplus_{v\in\Omega_k}H^1(k_v,S_\gamma)\xrightarrow{\cong}H^1(k,S_\gamma(\mathbb{A}_{\bar{k}}))\rightarrow \mathfrak{K}(S_\gamma/k)$ is dual to the diagonal embedding $\hat{S}_\gamma^\Gamma\rightarrow \prod_{v\in\Omega_k}\hat{S}_\gamma^{\Gamma_v}$.
Therefore, we have
\[
\mathcal{O}_{\gamma}^\kappa(\tilde{f})=
\prod_{v\in\Omega_k}\mathcal{O}_{\gamma}^{\kappa_v}(\tilde{f}_v)~~~~~~\text{ where }~~\mathcal{O}_{\gamma}^{\kappa_v}(\tilde{f}_v)=\sum_{\substack{\gamma_v'\in \mathrm{SL}_n(k_v)/\sim \\ \gamma_v'\sim_{st}\gamma}}\kappa_v(inv_v(\gamma_v',\gamma))\mathcal{O}_{\gamma_v'}(\tilde{f}_v),
\]
for the image $(\kappa_v)_{v\in\Omega_k}$ of $\kappa$ under the embedding 
$\hat{S}_\gamma^\Gamma\rightarrow \prod_{v\in\Omega_k}\hat{S}_\gamma^{\Gamma_v}$.

Using the identification $\kappa$ as an element of $\mathrm{Hom}(X_*(S_\gamma),\mathbb{G}_m)$ in Remark \ref{rmk:kappa_matching}, the endoscopic group $H$, corresponding to the pair $(\gamma,\kappa)$ by \cite[Lemma 9.7]{Kot86}, is described as follows:
\begin{itemize}
    \item 
For a trivial $\kappa$, we have $H=\mathrm{SL}_n$.
    \item
For a non-trivial $\kappa$, since $n$ is prime,
we have \[R^\vee(S_\gamma,H)=\{\alpha^\vee\in \{e_i-e_j\}_{1\leq i,j\leq n}\mid \langle \kappa,\alpha^\vee \rangle=1 \}=\emptyset,
\]where $e_i=(\underbrace{0,\cdots,0,1}_{i},  \underbrace{0,\cdots,0}_{n-i}) \in \mathbb{Z}^n$ with the identification $X_*(S_\gamma)\cong \mathbb{Z}_{\Sigma=0}^n$ (see Remark \ref{rmk:root_system_of_endoscopic_group}).  
Its dual $R(S_\gamma, H)$ is therefore empty, and so is the basis $\Delta^H$. 
Hence, the $\Gamma$-action on $X^*(S_\gamma)$ induced from that on $S_\gamma$ already preserves $\Delta^H$.
By Remark \ref{rmk:root_system_of_endoscopic_group}, it follows that the corresponding endoscopic group $H$ is $S_\gamma$.
\end{itemize}
In the case that $K_v/k_v$ is a field extension, we have $S_{\gamma,k_v}\cong \mathrm{R}^{(1)}_{K_v/k_v}\mathbb{G}_{m,K_v}$, where $\mathrm{R}^{(1)}_{K_v/k_v}\mathbb{G}_{m,K_v}$ denotes the kernel of the norm map $\mathrm{Nm}_{K_v/k_v}: \mathrm{R}_{K_v/k_v}\mathbb{G}_{m,K_v}\rightarrow \mathbb{G}_{m,k_v}$.
In particular, the splitting field of $S_{\gamma,k_v}$ is $K_v$.
By \cite[Proposition 7.5]{Kot86}, if the extension $K_v/k_v$ for $v<\infty_k$ is ramified, then for a non-trivial $\kappa$,  
\[\mathcal{O}_{\gamma}^{\kappa_v}(\tilde{f}_v)=\mathcal{O}_{\gamma}^{\kappa_v}(\mathbbm{1}_{\mathrm{SL}_n(\mathcal{O}_{k_v})})=0.\]
\end{enumerate}
In conclusion, we have
\[
N(X,T)\sim \left\{
\begin{array}{l l}
\mathcal{SO}_{\gamma}(\tilde{f})& \text{if $K/k$ is not Galois or ramified Galois};\\
\mathcal{SO_\gamma}(\tilde{f})+\sum_{\kappa\neq1}\mathcal{O}_{\gamma}^\kappa(\tilde{f})&\text{if $K/k$ is unramfied Galois}.
\end{array}
\right.
\]
Note that, when $k = \mathbb{Q}$, the only case (1) is possible since any finite extension of $\mathbb{Q}$ is ramified.

By the $\mathrm{GL}_n$-homogeneous space structure of $X$, the local stable orbital integral $\mathcal{SO}_\gamma(\tilde{f}_v)$ is identified with the  local (stable) orbital integral of $\mathrm{GL}_n$.
By the equality (\ref{eq:tamagawa_measure_in_SLn}) and Lemma \ref{lem:C_meas_value}, we then have
\begin{align*}
\mathcal{SO}_{\gamma}(\tilde{f})=|\Delta_k|^{\frac{-n^2+n}{2}} C_{meas}\cdot 
vol(d\mu_{\infty},X(k_\infty,T))
\prod_{v<\infty_k}\mathcal{O}_{\gamma,d\mu_v}^{\mathrm{GL}_n}(\mathbbm{1}_{\mathrm{GL}_n(\mathcal{O}_{k_v})}),
\end{align*}
since $X(k_v)\cap \mathrm{SL}_n(\mathcal{O}_{k_v})=X(k_v)\cap \mathrm{GL}_n(\mathcal{O}_{k_v})$.
We refer to Lemma \ref{lem:infinite_place_calculation} for the asymptotic formula for $vol(d\mu_\infty,X(k_\infty,T))$.
This yields that
\[
\mathcal{SO}_\gamma(\tilde{f})\sim|\Delta_k|^{\frac{-n^2+n}{2}} \frac{R_K h_K \sqrt{\abs{\Delta_K}}^{-1}}{R_k h_k \sqrt{\abs{\Delta_k}}^{-1}} \left(\prod_{i=2}^n \zeta_k(i)^{-1}\right)\left( \frac{2^{n-1}w_n\pi^{\frac{n(n+1)}{4}} }{\prod_{i=1}^n \Gamma(\frac{i}{2})} T^{\frac{n(n-1)}{2}}  \right)^{[k:\Q]}\frac{\mathcal{O}_{\gamma,d\mu_v}^{\mathrm{GL}_n}(\mathbbm{1}_{\mathrm{GL}_n(\mathcal{O}_{k_v})})}{q_v^{S_v(\gamma)}}.
\]


In the case that $K/k$ is unramified and Galois, for $\kappa\neq 1$, the equality (\ref{eq:tamagawa_measure_in_SLn}) and Lemma \ref{lem:C_meas_value} yield that
\[
\mathcal{O}_\gamma^\kappa(\tilde{f})=|\Delta_k|^{\frac{-n^2+n}{2}} C_{meas}\cdot 
\prod_{v\in\Omega_k}\mathcal{O}_{\gamma,d\mu_v}^{\kappa_v}(\tilde{f}_v)
\]
where $\mathcal{O}_{\gamma,d\mu_v}^{\kappa_v}(\tilde{f}_v)=\sum\limits_{\substack{\gamma_v'\in \mathrm{SL}_n(k_v)/\sim\\ \gamma_v'\sim_{st}\gamma}}\kappa_v(inv_v(\gamma_v',\gamma))\mathcal{O}_{\gamma_v',d\mu_v}(\tilde{f}_v)$.
For each $v\in\Omega_k$, the local $\kappa$-orbital integral  $\mathcal{O}_{\gamma,d\mu_v}^{\kappa_v}(\tilde{f}_v)$ with $\kappa\neq 1$ is computed as follows.
\begin{itemize}
    \item 
For $v\in\infty_k$, by the assumption that $k$ and $K$ are totally real, we have $S_{\gamma,k_v}\cong \mathbb{G}_{m,k_v}^{n-1}$ and thus $\hat{S}_\gamma^{\Gamma_v}\cong H^1(k_v,S_\gamma)=1$ (Hilbert's Theorem 90). This yields that  
\[
\mathcal{O}_{\gamma,d\mu_\infty}^{\kappa_\infty}(\tilde{f}_\infty)=vol(d\mu_\infty,X(k_\infty,T)).
\]
\item
Otherwise, the endoscopic group $S_{\gamma,k_v}$ is unramified.
For $v<\infty_k$, by Remark \ref{rmk:fundamental_lemma_for_quasi_split} and Lemma \ref{lem:measure_comparison_gln_sln}, we obtain
\[
\mathcal{O}_{\gamma,d\mu_v}^{\kappa_v}(\mathbbm{1}_{\mathrm{SL}_n(\mathcal{O}_{k_v})})=
\left(\prod_{\alpha\in R(S_\gamma,\mathrm{SL_n})}|\alpha(\gamma)-1|_v^{1/2}\right)^{-1}\mathbbm{1}_{S_{c}}(\gamma_H)=|\Delta_\chi|_v^{-\frac{1}{2}},
\]where $S_{c}$ denotes the maximal compact subgroup of $S_\gamma(k_v)$. 
The last equality follows from \cite[Example 3.8]{Gor}.
Moreover, as in the proof of \cite[Proposition 2.5]{CKL}, we have $|\Delta_\chi|_v^{-\frac{1}{2}}=q_v^{S_v(\gamma)}$.
\end{itemize}
Collecting the above results, we have
\[
\mathcal{O}_\gamma^\kappa(\tilde{f})\sim|\Delta_k|^{\frac{-n^2+n}{2}} \frac{R_K h_K \sqrt{\abs{\Delta_K}}^{-1}}{R_k h_k \sqrt{\abs{\Delta_k}}^{-1}} \left(\prod_{i=2}^n \zeta_k(i)^{-1}\right)\left( \frac{2^{n-1}w_n\pi^{\frac{n(n+1)}{4}} }{\prod_{i=1}^n \Gamma(\frac{i}{2})} T^{\frac{n(n-1)}{2}}  \right)^{[k:\Q]} ~~~~\text{ for $\kappa\neq 1$}.
\]
\end{proof}
\begin{remark}\label{rmk:about_orbitalintegral_of_GLn}
    For any finite place $v$ of $k$, the value of the orbital integral $\mathcal{O}_{\gamma,d\mu_v}^{\mathrm{GL}_n}(\mathbbm{1}_{\mathrm{GL}_n(\mathcal{O}_{k_v})})$, as noted previously, is an integer given by a $q_v$-polynomial whose leading term is expected to be $q_v^{S_v(\gamma)}$ by \cite{Yun13}.
    Since $\chi(x)\in\mathcal{O}_k[x]$, it follows that  $S_v(\gamma)=0$ for almost all $v<\infty_k$. 
    In fact,
    $\prod\limits_{v<\infty_k}\frac{\mathcal{O}_{\gamma,d\mu_v}^{\mathrm{GL}_n}(\mathbbm{1}_{\mathrm{GL}_n(\mathcal{O}_{k_v})})}{q_v^{S_v(\gamma)}}$ is a finite product.
    Furthermore, the explicit closed formulas for $\mathcal{O}_{\gamma,d\mu_v}^{\mathrm{GL}_n}(\mathbbm{1}_{\mathrm{GL}_n(\mathcal{O}_{k_v})})$ have been established in the following cases:
    \begin{enumerate}
        \item For $n=2$ and $3$, the close formula is given in \cite{CKL}.
        \item In the case that $\mathcal{O}_{k_v}[x]/(\chi(x))$ is a Bass ring, i.e. every ideal of $\mathcal{O}_{k_v}[x]/(\chi(x))$ is generated by 2 elements, the closed formula is given in \cite{CHL}.
    \end{enumerate}
\end{remark}
\appendix
\section{Computations in Section \ref{sec:application}}\label{sec:appendix_A}

In this appendix, we provide the proofs of Lemma \ref{lem:C_meas_value} and Lemma \ref{lem:measure_comparison_gln_sln}, and compute the value of $vol(d\mu_\infty, X(k_\infty,T))$.
We continue to use the notations in Section \ref{sec:application}.

\subsection{Measure comparisons}\label{app:meas_diff}
For the proofs of Lemma \ref{lem:C_meas_value} and Lemma \ref{lem:measure_comparison_gln_sln}, we describe the structure of $T_{\gamma,k_v}$ and its model for each $v\in\Omega_k$.
By \cite[3.12]{Vosk}, one has that $T_{\gamma,k_v}\cong \prod_{i\in B_v(\chi)}\mathrm{R}_{K_{v,i}/k_v}\mathbb{G}_{m,K_{v,i}}$.
For each $i\in B_v(\chi)$, the component $\mathrm{R}_{K_{v, i}/k_v}\mathbb{G}_{m, K_{v, i}}$ has a smooth model over $\mathcal{O}_{k_v}$, given by $\mathrm{R}_{\mathcal{O}_{K_{v, i}}/\mathcal{O}_{k_v}}\mathbb{G}_{m,\mathcal{O}_{K_{v, i}}}$ (see \cite[B.3.(2)]{KP23}). 
Thus, $T_{\gamma,k_v}$ admits a smooth model\[
T_{\gamma,\mathcal{O}_{k_v}}\cong\prod_{i\in B_v(\chi)}\mathrm{R}_{\mathcal{O}_{K_{v,i}}/\mathcal{O}_{k_v}}\mathbb{G}_{m,\mathcal{O}_{K_{v,i}}},\]
which is referred to as the standard integral model of $T_{\gamma,k_v}$.
In particular, the set of $\mathcal{O}_{k_v}$-points $T_{\gamma,\mathcal{O}_{k_v}}(\mathcal{O}_{k_v})= \prod_{i\in B_v(\chi)}\mathcal{O}_{K_{v,i}}^{\times}$ is the maximal compact open subgroup of $T_\gamma(k_v)$.
\begin{proof}[Proof of Lemma \ref{lem:C_meas_value}]
To compare $\prod_{v\in\Omega_k}|\omega_X^{\mathrm{GL}_n}|_v$ and $\prod_{v\in\Omega_k}d\mu_v$, we define a measure which plays a role of a bridge between these two measures.
For $v< \infty_k$, let $\omega_{X_{k_v}}^{can}$ be a $\mathrm{GL}_{n,k_v}$-invariant form on $X_{k_v}$ such that $\omega_{\mathrm{GL}_{n,k_v}}^{can}=\omega_{X_{k_v}}^{can}\cdot\omega_{T_{\gamma,k_v}}^{can}$, where $\omega_{\mathrm{GL}_{n,k_v}}^{can}$ (resp. $\omega_{T_{\gamma,k_v}}^{can}$) is the invariant form on $\mathrm{GL}_{n,k_v}$ (resp. $T_{\gamma,k_v}$) defined in \cite[Section 4]{Gro97}.
For $v\in \infty_k$, recall the volume form $\omega_{X_{k_v}}^{can}$ from the description given before Lemma \ref{lem:C_meas_value}.
We then have
\begin{equation}\label{eq:measrue_com_local}
|\omega_{X_{k_v}}^{can}|_v=\left\{
\begin{array}{c l}
    \frac{|\mathrm{GL}_{n, \mathcal{O}_{k_v}}(\kappa_v)|\cdot q_v^{-n^2}}{| T_{\gamma,\mathcal{O}_{k_v}}(\kappa_v)|\cdot q_v^{-n}}d\mu_v & v<\infty_k; \\
    d\mu_v & v\in\infty_k.
\end{array}
\right.\end{equation}
Here, the terms when $v<\infty_k$ are formulated as follows
    \[
    \left\{
    \begin{array}{l}
        |\mathrm{GL}_{n, \mathcal{O}_{k_v}}(\kappa_v)|\cdot q_v^{-n^2} = (1-\frac{1}{q_v^n}) (1-\frac{1}{q_v^{n-1}}) \cdots (1- \frac{1}{q_v});\\
        |T_{\gamma,\mathcal{O}_{k_v}}(\kappa_v)|\cdot q_v^{-n} = \prod_{i \in B_v(\chi)} ( 1- \frac{1}{|\kappa_{K_{v,i}}|} ),
    \end{array}
    \right.
    \]
    where $|T_{\gamma,\mathcal{O}_{k_v}}(\kappa_v)|=| \mathrm{R}_{\mathcal{O}_{K_{v,i}} / \mathcal{O}_{k_v}}(\mathbb{G}_{m,\mathcal{O}_{K_{v,i}}})(\kappa_v)| =q_v^{[K_{v,i}:k_v]} (1 - \frac{1}{|\kappa_{K_{v,i}}|})$.
    Hence, it remains to compare $\prod_{v\in\Omega_k}|\omega_X^{\mathrm{GL}_n}|_v$ with $\prod_{v\in\Omega_k}|\omega_{X_{k_v}}^{can}|_v$.
    
For a connected reductive group $G$, by \cite[Corollary 7.3 and Proposition 9.3]{GG99}, we have
    \[
    \prod_{v\in \Omega_k}\frac{|\omega_{G_{k_v}}^{can}|_v}{|\omega_G|_v}=f(M_G)^{\frac{1}{2}},
    \]
    where $f(M_G)$ is the global conductor of the motive $M_G$ of $G$ (see \cite[Equation (9.1)]{GG99}). 
    For $G= \mathrm{GL}_n$ or $T_\gamma$, the conductor $f(M_G)^{\frac{1}{2}}$ is given as follows:
    \begin{enumerate}
        \item 
    Since $\mathrm{GL}_n$ is split over $k$, 
    we have $f(M_{\mathrm{GL}_n})^{\frac{1}{2}}=1$.
\item
    On the other hand, in the case of $T_\gamma$, the global conductor of the motive $M_{T_\gamma}$ is given by 
    $f(M_{T_{\gamma}})=\prod_{v<\infty_k} q_v^{a_v(X^*(T_\gamma)\otimes\mathbb{Q})}$ following \cite[Section 2]{Gro97}, 
    where $a_v(X^*(T_\gamma)\otimes\mathbb{Q})$ is the local Artin conductor of $X^*(T_\gamma)\otimes\mathbb{Q}$.
    By \cite[Proposition VII.11.7]{Neu} and the argument in \cite[Section 1.2]{Lee21},
    we have that
    \[f(M_{T_\gamma})=\prod_{v<\infty_k}\prod_{i\in B_v(\chi)}|\Delta_{K_{v,i}/k_v}|^{-1}.\]
    \end{enumerate}
Therefore, by Proposition \ref{prop:alg-top_match}, we have
\[
    \prod_{v\in \Omega_k}|\omega_X^{\mathrm{GL}_n}|_v= \left(
    \prod_{v< \infty_k}\prod_{i\in B_v(\chi)}|\Delta_{K_{v,i}/k_v}|^{-\frac{1}{2}}\right) \prod_{v\in \Omega_k} \abs{\omega_{X_{k_v}}^{can}}_v.
\]
Here, \cite[Proposition 2.5]{CKL} yields that
    \[
        \abs{\Delta_\chi}_v^{\frac{1}{2}} \prod_{i\in B_v(\chi)}|\Delta_{K_{v,i}/k_v}|_v^{-\frac{1}{2}} = \Big(\abs{\Delta_\chi}_v^{\frac{1}{2}} \prod_{i\in B_v(\chi)} 
        \abs{\Delta_{\chi_{v,i}}}_v^{-\frac{1}{2}}\Big)
        \prod_{i\in B_v(\chi)} q_v^{-S_v(\chi_{v,i})} 
        = q_v^{-S_v(\gamma)},
   \] 
where $S_v(\chi_{v,i})$ denotes the $\mathcal{O}_{k_v}$-module length between $\mathcal{O}_{K_{v,i}}$ and $\mathcal{O}_{k_v}[x]/(\chi_{v,i}(x))$.
The last equality follows from \cite[Section 4.1]{Yun13} and \cite[Corollary 1 of Proposition 11 in Chapter 4.6]{Bou}. 

    In conclusion, by using the class number formula for $K$ and $k$, we have
    \begin{align*}
    C_{meas}&=\left. \frac{\zeta_K(s)}{\zeta_k(s)} \right|_{s=1} \left(\prod_{i=2}^n \zeta_k(i)^{-1}\right)\left(\prod_{v<\infty_k}\frac{|\Delta_\chi|_v^{-\frac{1}{2}}}{q_v^{S_v(\gamma)}}\right)\\
    &=2^{(n-1)[k:\Q]} \frac{R_K h_K \sqrt{\abs{\Delta_K}}^{-1}}{R_k h_k \sqrt{\abs{\Delta_k}}^{-1}} \left(\prod_{i=2}^n \zeta_k(i)^{-1}\right)\left(\prod_{v<\infty_k}\frac{|\Delta_\chi|_v^{-\frac{1}{2}}}{q_v^{S_v(\gamma)}}\right).
    \end{align*}
\end{proof}

\begin{proof}[Proof of Lemma \ref{lem:measure_comparison_gln_sln}]
In this proof, to ease the notations, we omit the subscript $k_v$ indicating the base change $(-)\times_k k_v$ for any scheme, whenever no confusion arises.
We compare the measure $|\omega_X^{can}|_v$, defined in the proof of Lemma \ref{lem:C_meas_value}, with $dh_v/ds_v$.
The desired result then follows from the equality given in (\ref{eq:measrue_com_local}).
For the centralizer $T_{\gamma'}$ of $\gamma'$ in $\mathrm{GL}_{n}$, let $\omega_{T_{\gamma'}}^{can}$ be an invariant form in \cite[Section 4]{Gro97}.
Since $T_\gamma\cong T_{\gamma'}$, the quotient measure $|\omega_{\mathrm{GL}_{n}}^{can}|_v/|\omega_{T_{\gamma'}}^{can}|_v$ coincides with  $|\omega_X^{can}|_v=|\omega_{\mathrm{GL}_{n}}^{can}|_v/|\omega_{T_{\gamma}}^{can}|_v$.
Thus we may and do work with $\gamma'=\gamma$.
    
    For a torus $S_{\gamma}\cong \mathrm{R}^{(1)}_{K_v/k_v}\mathbb{G}_{m,K_v}$, there is the standard integral model $S_{\gamma,\mathcal{O}_{k_v}}$ isomorphic to the kernel of the norm map $\Nm_{\mathcal{O}_{K_v}/\mathcal{O}_{k_v}}:T_{\gamma,\mathcal{O}_{k_v}}\rightarrow \mathbb{G}_{m,\mathcal{O}_{k_v}}$.
    Then, the set of its $\mathcal{O}_{k_v}$-points $S_{\gamma,\mathcal{O}_{k_v}}(\mathcal{O}_{k_v})$ coincides with the maximal compact subgroup $S_c$ of $S_\gamma(k_v)$. 
    Moreover, the following commutative diagram over $\mathcal{O}_{k_v}$ holds
    \begin{equation}\label{eq:commdiast}
    \begin{tikzcd}
	{S_{\gamma,\mathcal{O}_{k_v}}}&&{T_{\gamma,\mathcal{O}_{k_v}}} && {\mathbb{G}_{m,\mathcal{O}_{k_v}}} \\
	{\mathrm{SL}_{n,\mathcal{O}_{k_v}}}&&{\mathrm{GL}_{n,\mathcal{O}_{k_v}}} && {\mathbb{G}_{m,\mathcal{O}_{k_v}}.}
    \arrow["{\Nm}", from=1-3, to=1-5]
	\arrow[hook, from=1-1, to=2-1]
    \arrow[hook, from=1-3, to=2-3]
    \arrow[from=1-1, to=1-3]
    \arrow[from=2-1, to=2-3]
    \arrow["id", from=1-5, to=2-5]
	\arrow["{\mathrm{det}}", from=2-3, to=2-5]
    \end{tikzcd}
    \end{equation}
    Here the map $\Nm:=\Nm_{\mathcal{O}_{K_v}/\mathcal{O}_{k_v}}$ is smooth over $\mathcal{O}_{k_v}$ since $K_{v,i}/k_v$ is unramified extension for each $i\in B_v(\chi)$, and the map $\det$ is smooth over $\mathcal{O}_{k_v}$ as well.
    We denote by $\omega_{\mathrm{SL}_n}^{can}$ (resp. $\omega_{S_{\gamma}}^{can}$) an invariant form on $\mathrm{SL}_n$ (resp. $S_{\gamma}$) defined in \cite[Section 4]{Gro97}. 
    On $\gamma\cdot \mathrm{SL}_n(k_v)\cong S_\gamma(k_v)\backslash \mathrm{SL}_n(k_v)$, it follows that
    \begin{equation}\label{eq:volumeform_quotient_SLn}
    \frac{|\omega_{\mathrm{SL}_{n}}^{can}|_v}{|\omega_{S_{\gamma}}^{can}|_v}=\frac{|\mathrm{SL}_{n, \mathcal{O}_{k_v}}(\kappa_v)|\cdot q_v^{-(n^2-1)}}{ |S_{\gamma,\mathcal{O}_{k_v}}(\kappa_v)|\cdot q_v^{-(n-1)}}\cdot \frac{dh_v}{ds_v}.
    \end{equation}
    Thus, it suffices to compare $|\omega_{X}^{can}|_v$ with $|\omega_{\mathrm{SL}_{n}}^{can}|_v/|\omega_{S_{\gamma}}^{can}|_v$.
    We claim that $|\omega_{X}^{can}|_v=|\omega_{\mathrm{SL}_{n}}^{can}|_v/|\omega_{S_{\gamma}}^{can}|_v$.
    
    Let $\omega_{\mathbb{G}_m}^{can}$ denote an invariant form on $\mathbb{G}_m$ which has good reduction (mod $\pi_v$), as defined in \cite[p. 293]{Gro97}.
    Since $\mathbb{G}_m\cong \mathrm{SL}_n\backslash \mathrm{GL}_n$, one can deduce that $\det^*\omega_{\mathbb{G}_m}^{can}\wedge \tilde{\omega}_{\mathrm{SL}_n}^{can}$ is a non-zero element in $\bigwedge^{n^2}\Omega_{\mathrm{GL}_n/k_v}(\mathrm{GL}_n)$, where $\tilde{\omega}_{\mathrm{SL}_n}^{can}$ is a lifting of $\omega_{\mathrm{SL}_n}^{can}$ in Definition \ref{deg:algmatch}.
    Since $\det$ is smooth over $\mathcal{O}_{k_v}$, the invariant form $\det^*\omega_{\mathbb{G}_m}^{can}\wedge \tilde{\omega}_{\mathrm{SL}_n}^{can}$ has good reduction (mod $\pi_v$). Hence, $\omega_{\mathrm{GL}_n}^{can}$ agrees with $\det^*\omega_{\mathbb{G}_m}^{can}\wedge \tilde{\omega}_{\mathrm{SL}_n}^{can}$ up to $\mathcal{O}_{k_v}^\times$.
    By the same argument, 
    $\omega_{T_\gamma}^{can}$ agrees with $\Nm ^*\omega_{\mathbb{G}_m}^{can}\wedge \tilde{\omega}_{S_\gamma}^{can}$ up to $\mathcal{O}_{k_v}^\times$, since $\mathbb{G}_m\cong S_\gamma\backslash T_\gamma$ and $\Nm$ is smooth over $\mathcal{O}_{k_v}$.

We denote  by $\tilde{\omega}^{\mathrm{GL}_n}$ a lifting of a volume form $\omega$ on $S_\gamma,T_\gamma$, and $\mathrm{SL}_n$ along the embedding into $\mathrm{GL}_n$.
For $\varphi_{\mathrm{GL}_n}$ in (\ref{eq:generalGXmap}), recall that
\begin{equation}\label{eq:volumform_equation_1}
\omega_{\mathrm{GL}_n}^{can}= \varphi_{\mathrm{GL}_n}^* \omega^{can}_X \wedge \tilde{\omega}_{T_\gamma}^{can,\mathrm{GL}_n}.
\end{equation}
The left-hand side turns out to be $\det^*\omega_{\mathbb{G}_m}^{can}\wedge \tilde{\omega}_{\mathrm{SL}_n}^{can}$, up to $\mathcal{O}_{k_v}^\times$.
Since the right-hand side is independent of the choice of $\tilde{\omega}_{T_\gamma}^{can,\mathrm{GL}_n}$ (see \cite[p. 24]{Wei82}),
we may choose a lifting  $\tilde{\omega}_{T_\gamma}^{can,\mathrm{GL}_n}=\det^*\omega_{\mathbb{G}_m}^{can}\wedge\tilde{\omega}_{S_\gamma}^{can,\mathrm{GL}_n}$, up to $\mathcal{O}_{k_v}^\times$.
Here, by the commutative diagram (\ref{eq:commdiast}), $\det^*\omega_{\mathbb{G}_m}^{can}$ indeed provides a lifting of $\mathrm{Nm}^*\omega_{\mathbb{G}_m}^{can}$ along $T_\gamma\hookrightarrow \mathrm{GL}_n$.
We then have the following relation, up to $\mathcal{O}_{k_v}^\times$,
\[
\mathrm{det}^* \omega_{\mathbb{G}_m}^{can}\wedge\tilde{\omega}^{can,\mathrm{GL}_n}_{\mathrm{SL}_n}=\varphi_{\mathrm{GL}_n}^*\omega_X^{can} \wedge\mathrm{det}^*\omega_{\mathbb{G}_m}^{can}\wedge \tilde{\omega}_{S_\gamma}^{can,\mathrm{GL}_n}.
\]
Therefore, for some $c\in\mathcal{O}_{k_v}^\times$, the $(n^2-1)$-form $\tilde{\omega}^{can,\mathrm{GL}_n}_{\mathrm{SL}_n}+c\cdot\varphi_{\mathrm{GL}_n}^*\omega_X^{can}\wedge \tilde{\omega}_{S_\gamma}^{can,\mathrm{GL}_n}$ on $\mathrm{GL}_n$ is annihilated by the exterior product with $\mathrm{det}^*\omega_{\mathbb{G}_m}^{can}$.
Since the form $\det^*\omega_{\mathbb{G}_m}^{can}$ is of degree 1, there exists an $(n^2-2)$-form $\omega'$ on $\mathrm{GL}_n$ such that
\[
\tilde{\omega}^{can,\mathrm{GL}_n}_{\mathrm{SL}_n}+c\cdot\varphi_{\mathrm{GL}_n}^*\omega_X^{can}\wedge \tilde{\omega}_{S_\gamma}^{can,\mathrm{GL}_n}=\mathrm{det}^*\omega_{\mathbb{G}_m}^{can}\wedge \omega'.
\]
Restricting this equation to $\mathrm{SL}_n$, by imposing $\det=1$ with $d(\det)=0$, we obtain the following relation, up to $\mathcal{O}_{k_v}^\times$,
\[
\omega_{\mathrm{SL}_{n}}^{can}=\varphi_{\mathrm{SL}_n}^*\omega_{X}^{can}\wedge \tilde{\omega}_{S_\gamma}^{can,\mathrm{SL}_n},
\]
where $\tilde{\omega}_{S_\gamma}^{can,\mathrm{SL}_n}$ denotes a lifting of $\omega_{S_\gamma}^{can}$ along $S_\gamma\hookrightarrow \mathrm{SL}_n$.
By Proposition \ref{prop:alg-top_match}, this yields the claim and we have
\[d\mu_v=\frac{|T_{\gamma,\mathcal{O}_{k_v}}(\kappa_v)|\cdot q_v^{-n}}{| \mathrm{GL}_{n, \mathcal{O}_{k_v}}(\kappa_v)|\cdot q_v^{-n^2}}|\omega_{X_{k_v}}^{can}|_v=\frac{|S_{\gamma,\mathcal{O}_{k_v}}(\kappa_v)|\cdot q_v^{-(n-1)}}{|\mathrm{SL}_{n, \mathcal{O}_{k_v}}(\kappa_v)|\cdot q_v^{-(n^2-1)}}|\omega_{X_{k_v}}^{can}|_v=\frac{dh_v}{ds_v}\]
on $\gamma\cdot \mathrm{SL}_n(k_v)\cong S_\gamma(k_v)\backslash \mathrm{SL}_n(k_v)$ by (\ref{eq:measrue_com_local}) and (\ref{eq:volumeform_quotient_SLn}).
\end{proof}

\subsection{Asymptotic formula for $vol(d\mu_\infty,X(k_\infty,T))$}
\begin{lemma}\label{lem:infinite_place_calculation}
We have
\[vol(d\mu_\infty,X(k_\infty,T))\sim\left( \frac{w_n\pi^{\frac{n(n+1)}{4}} }{\prod_{i=1}^n \Gamma(\frac{i}{2})} T^{\frac{n(n-1)}{2}}  \right)^{[k:\Q]}\prod_{v\in\infty_k}|\Delta_\chi|_v^{-\frac{1}{2}}\]
where $w_n$ is the volume of the unit ball in $\mathbb{R}^{\frac{n(n-1)}{2}}$.
\end{lemma}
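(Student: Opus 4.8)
The plan is to factor the volume over the real places of $k$ and, at each one, carry out an explicit computation on a real conjugacy class in $\mathrm{GL}_n(\R)$, in the spirit of the archimedean analysis of \cite{EMS}. Since $\norm{x}_\infty=\max_{v\in\infty_k}\norm{x}_v$ and $d\mu_\infty=\prod_{v\in\infty_k}d\mu_v$, one has $vol(d\mu_\infty,X(k_\infty,T))=\prod_{v\in\infty_k}vol\big(d\mu_v,\{x\in X(k_v):\norm{x}_v\le T\}\big)$, so it suffices to treat a single place $v\in\infty_k$, which is real because $k$ is totally real. As $k$ and $K$ are totally real, the image of $\chi$ under $v$ splits over $\R$ as $\prod_{i=1}^n(x-\lambda_i)$ with distinct real roots $\lambda_i$; hence $X(k_v)$ is the $\mathrm{GL}_n(\R)$-conjugacy class of $\Lambda:=\diag(\lambda_1,\dots,\lambda_n)$, and $T_{\gamma,k_v}\cong\mathrm{R}_{K_v/k_v}\mathbb{G}_m$ is the diagonal maximal torus. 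By the real Schur decomposition every matrix in this class has the form $k(\Lambda+m)k^{-1}$ for some $k\in\mathrm{O}(n)$ and some strictly upper triangular $m$, and conversely every such matrix lies in the class; this parametrizes $X(k_v)$ by $\mathrm{O}(n)\times\R^{n(n-1)/2}$, finite‑to‑one in a way that is immaterial for measures. Because $\norm{\cdot}_v$ is the Frobenius norm, which is invariant under conjugation by $\mathrm{O}(n)$, one computes $\norm{k(\Lambda+m)k^{-1}}_v^2=\sum_i\lambda_i^2+\sum_{i<j}m_{ij}^2$; thus $\{x\in X(k_v):\norm{x}_v\le T\}$ corresponds to $\mathrm{O}(n)$ times the Euclidean ball of radius $\big(T^2-\sum_i\lambda_i^2\big)^{1/2}$ in $\R^{n(n-1)/2}$, whose volume is $w_n\big(T^2-\sum_i\lambda_i^2\big)^{n(n-1)/4}\sim w_n\,T^{n(n-1)/2}$ as $T\to\infty$.

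It then remains to identify the measure $d\mu_v=|\omega_{X_{k_v}}^{can}|_v$ in these coordinates. Since $\omega_{X_{k_v}}^{can}$ is $\mathrm{GL}_n(\R)$‑invariant and $X(k_v)$ is a single $\mathrm{GL}_n(\R)$‑orbit, $d\mu_v$ is a constant multiple of $dk\,dm$ (Haar measure on $\mathrm{O}(n)$ times Lebesgue measure on $\R^{n(n-1)/2}$), and I would compute that constant at the base point $\Lambda$ from the matching relation $\omega_{\mathrm{GL}_{n,k_v}}^{can}=\omega_{X_{k_v}}^{can}\cdot\omega_{T_{\gamma,k_v}}^{can}$ (Definition~\ref{deg:algmatch}, Proposition~\ref{prop:alg-top_match}). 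At the identity the differential of $g\mapsto g^{-1}\Lambda g$ is $\xi\mapsto[\Lambda,\xi]=\mathrm{ad}(\Lambda)\xi$, which induces an isomorphism $\mathfrak{gl}_n/\mathfrak{t}\xrightarrow{\sim}T_\Lambda X$ of determinant $\prod_{i\neq j}(\lambda_i-\lambda_j)=\pm\Delta_\chi$; combining this with the Jacobian $\prod_{i<j}|\lambda_i-\lambda_j|=|\Delta_\chi|_v^{1/2}$ of the change from the Schur tangent frame $\{E_{ij}\}_{i<j}\cup\{[E_{ij}-E_{ji},\Lambda]\}_{i<j}$ to the ambient coordinate frame $\{E_{ij}\}_{i\neq j}$ on $M_n(\R)$ shows that $d\mu_v=|\Delta_\chi|_v^{-1/2}\,\nu$, where $\nu$ is a $\mathrm{GL}_n(\R)$‑invariant measure on the conjugacy class that does not depend on $v$ or on $\chi$ (it depends only on the archimedean Gross normalizations of $\omega_{\mathrm{GL}_{n,k_v}}^{can}$ and of $\omega_{T_{\gamma,k_v}}^{can}$, the latter for the split torus $\mathbb{G}_m^n$, together with the normalization of Haar measure on $\mathrm{O}(n)$). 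Unwinding those normalizations from \cite[Section 9]{GG99} and \cite[Section 4]{Gro97}, and inserting the classical formula for the Riemannian volume of $\mathrm{O}(n)$, one obtains $vol\big(d\mu_v,\{x\in X(k_v):\norm{x}_v\le T\}\big)=\dfrac{w_n\,\pi^{n(n+1)/4}}{\prod_{i=1}^n\Gamma(i/2)}\big(T^2-\sum_i\lambda_i^2\big)^{n(n-1)/4}|\Delta_\chi|_v^{-1/2}$. Taking the product over $v\in\infty_k$ and using $\prod_{v\in\infty_k}\big(T^2-\sum_i\lambda_i^2\big)^{n(n-1)/4}\sim\big(T^{n(n-1)/2}\big)^{[k:\Q]}$ then yields the asserted formula, with the leading term in $T$ being exact, which is all that is needed in the proof of Theorem~\ref{thm:application}.

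The step I expect to be the main obstacle is the determination of the constant $\nu$: one has to track the archimedean Gross normalization — which packages the $\Gamma$‑ and $\pi$‑factors attached to the motives of $\mathrm{GL}_n$ and of $\mathrm{R}_{K_v/k_v}\mathbb{G}_m$ — and reconcile it carefully with the normalization of Haar measure on $\mathrm{O}(n)$ implicit in the Schur‑coordinate description of the invariant measure on the conjugacy class, so that all the constants collapse to exactly $\tfrac{\pi^{n(n+1)/4}}{\prod_{i=1}^n\Gamma(i/2)}$. The single‑place volume estimate itself is essentially the archimedean computation in \cite{EMS} (which already works with the norm (\ref{intro:eq:modified_norm})); what is genuinely new here is making it uniform over all real places of $k$ and matching the constant against the Gross‑normalized measure $d\mu_v$.
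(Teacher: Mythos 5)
Your overall strategy is the same as the paper's: factor over the real places, parametrize the conjugacy class of $\diag(\lambda_1,\dots,\lambda_n)$ by (orthogonal group)$\,\times\,$(upper-triangular coordinates), use the $\mathrm{O}(n)$-invariance of the Frobenius norm to reduce the norm condition to a Euclidean ball of volume $\sim w_nT^{n(n-1)/2}$, extract $|\Delta_\chi|_v^{-1/2}$ from a Jacobian of the differences $\lambda_i-\lambda_j$, and multiply by an $\mathrm{O}(n)$-volume constant. (The paper does this through the Iwasawa decomposition $g=ank$ of (\ref{eq:ankdecompo}), writing $x=g^{-1}\lambda g=k^{-1}(n^{-1}\lambda n)k$, which is your Schur parametrization in different clothing; the ball estimate and the $|\Delta_\chi|_v^{1/2}$ Jacobian are exactly the imported computation of \cite[p.~281--282]{EMS}.)

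The genuine gap is the step you yourself flag as the ``main obstacle'': the determination of the constant $\nu$, i.e.\ the precise normalization relating $d\mu_v=|\omega^{can}_{X_{k_v}}|_v$ to $dk\,dm$ in your coordinates. You assert, without argument, that ``unwinding the normalizations'' from \cite{GG99}, \cite{Gro97} and the Riemannian volume of $\mathrm{O}(n)$ makes everything collapse to $\pi^{n(n+1)/4}/\prod_{i=1}^n\Gamma(i/2)$; but this constant is the entire nontrivial content of the lemma beyond the \cite{EMS} estimate, and nothing in your sketch pins it down. Two concrete points are missing. First, the identification of the quotient measure: the paper proves $|\omega^{can}_{\mathrm{GL}_{n,k_v}}|_v=|\det|^{-n}\bigwedge dx_{ij}=da\,dn\,dk$ (see (\ref{eq:decompose_haar}), via \cite{Knapp}), quotients by the torus measure $\bigwedge dt_i/|t_i|$ while keeping track of the sign components $\Sigma=\{\pm1\}^n$ in (\ref{eq:techtech}) --- this is where the factor $2^{-n}$ comes from --- and then computes $vol(K,dk)=2^n\pi^{n(n+1)/4}\prod_{i=1}^n\Gamma(i/2)^{-1}$ by an explicit LDU/Gindikin--Karpelevich computation, $vol(K,dk)=2^n\int_{\bar N}e^{-2\rho\log a(\bar n)}\,d\bar n$ (Lemma \ref{lem:volomeofK}, using \cite{Vosk}). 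None of this is replaced in your proposal by an actual computation. Second, your remark that the $2^n$-to-one nature of the Schur parametrization $(k,m)\mapsto k(\Lambda+m)k^{-1}$ is ``immaterial for measures'' is false as stated: it contributes exactly the factor $2^{-n}$ that cancels the $2^n$ in $vol(\mathrm{O}(n))$, and ignoring it (or mismatching the normalization of $dk$ against the one forced by $\omega^{can}_{\mathrm{GL}_n}=\omega^{can}_X\cdot\omega^{can}_{T_\gamma}$) would change the answer by a power of $2$. Your Jacobian discussion ($\det\mathrm{ad}(\Lambda)=\pm\Delta_\chi$ combined with the frame-change factor $|\Delta_\chi|_v^{1/2}$) is plausible in outline and gives the correct net power $|\Delta_\chi|_v^{-1/2}$, but until the normalization of $dk$ and the multiplicity bookkeeping are carried out as in (\ref{eq:decompose_haar}), (\ref{eq:techtech}) and Lemma \ref{lem:volomeofK}, the stated constant is unproved rather than derived.
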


\begin{proof}
We fix an infinite place $v \in \infty_k$.
By the assumption that $k$ and $K$ are totally real number fields, the polynomial $\chi(x)$ splits completely over $k_v\cong \mathbb{R}$.
Let $\lambda_1, \ldots, \lambda_n \in k_v$ be the roots of $\chi(x)$, and $\lambda := \diag(\lambda_1, \ldots, \lambda_n) \in X(k_v)$ be the diagonal matrix.
We denote by $T_{\lambda,k_v}$ the centralizer of $\lambda$ in $\mathrm{GL}_{n, k_v}$.
Since $\lambda$ is regular, the roots $\lambda_i$ are distinct and so the stabilizer $T_{\lambda}(k_v)$ is the set of diagonal matrices in $\mathrm{GL}_n(k_v)$. 
By the argument in Section \ref{meas:gln}, there exists $g_0\in \mathrm{GL}_n(k_v)$ such that $g_0^{-1}\gamma g_0=\lambda$.
We define the map
\[
\phi: T_\lambda(k_v)\backslash \mathrm{GL}_n(k_v)\xrightarrow{\cong} T_\gamma(k_v)\backslash \mathrm{GL}_n(k_v),\ T_\lambda(k_v)g\mapsto T_\gamma(k_v) g_0g,
\]which is $\mathrm{GL}_n(k_v)$-invariant and set \[R_{T,v}:= \{T_\lambda(k_v) g\in T_\lambda(k_v)\backslash\mathrm{GL}_n(k_v) \mid \norm{g^{-1}\lambda g}_v\leq T\}.\]
We abuse the notation $d\mu_v$ for the pullback of $d\mu_v=|\omega_{X,k_v}^{can}|_v$ along the 
isomorphism $\phi$.
Then, 
we have \[d\mu_v=\frac{|\omega_{\mathrm{GL}_{n,k_v}}^{can}|_v}{\bigwedge_{i=1}^n\frac{dt_i}{|t_i|}}\] on $T_\lambda(k_v)\backslash \mathrm{GL}_n(k_v)$ where $(t_1,\cdots,t_n)$ are coordinates on $ T_\lambda\cong \mathbb{G}_{m,k_v}^n$, and the following equality holds
\[vol(d\mu_v,X(k_v,T))=vol(d\mu_v, R_{T,v}).\]

We consider the Iwasawa decomposition
\begin{equation}\label{eq:ankdecompo}
\mathrm{GL}_n(k_v)= ANK,
\end{equation}
where
$
\left\{
\begin{array}{l}
\textit{$A$ is the subgroup of $\mathrm{GL}_n(k_v)$ consisting of positive diagonal matrices};\\
\textit{$N$ is the subgroup of $\mathrm{GL}_n(k_v)$ consisting of unipotent upper triangular matrices};\\
\textit{$K$ is the orthogonal group $\mathrm{O}_n(k_v)\subset \mathrm{GL}_n(k_v)$}.\\ 
\end{array}
\right.
$
We fix a measure $da:=\bigwedge_{i=1}^n\frac{dt_i}{t_i}$ on $A\cong \mathbb{R}_{>0}^n$ and $dn := \bigwedge_{1\leq i<j \leq n} dx_{ij}$ on $N\cong \mathbb{R}^{\frac{n(n-1)}{2}}$.
For $f\in \mathcal{C}_c^\infty(T_\lambda(k_v))$, it follows that
\begin{equation}\label{eq:techtech}
\int_{T_\lambda(k_v)}f(t) \bigwedge_{i=1}^n\frac{dt_i}{\abs{t_i}}=\sum_{\sigma \in \Sigma}\int_A f(\sigma a)da,
\end{equation}
where $\Sigma=\{\pm 1\}^n$.
By \cite[Section 8]{Knapp}, with respect to the Iwasawa decomposition in (\ref{eq:ankdecompo}), a Haar measure $dk$ on $K$ can be normalized so that
\begin{equation}\label{eq:decompose_haar}
    dg:=|\omega_{\mathrm{GL}_{n,k_v}}^{can}|_v = \frac{1}{\abs{\det}^n} \bigwedge_{1\leq i,j \leq n} dx_{ij} = dadndk.
\end{equation}
For $f\in\mathcal{C}_c^\infty(G(k_v))$, we have
\begin{align*}
    \int_{\mathrm{GL}_n(k_v)}f(g)dg&= \int_{A\times N \times K} f(ank) da dn dk\\
    &=|\Sigma|^{-1}\sum_{\sigma\in\Sigma}\int_{A\times N\times K}f(a nk\sigma)dadndk\\
    &=\frac{1}{2^n}\int_{T_\lambda(k_v)\times N\times K}f(tnk)dtdndk,
\end{align*}where $dt:= \bigwedge_{i=1}^n\frac{dt_i}{\abs{t_i}}$. 
By Proposition \ref{prop:alg-top_match}, this yields that 
\[
\int_{X(k_v)}f(x) d\mu_v=\frac{1}{2^n}\int_{N\times K}f(nk)dndk.
\]
Here $R_{T,v}$ corresponds to the set $N_{T,v}\times K \subset N\times K$ where $N_{T,v}:= \{n \in N \mid \norm{n^{-1} \lambda n}_v \leq T\}$. 
Thus we have
\[
vol(d\mu_v, R_{T,v}) = \frac{1}{2^n} \cdot vol( N_{T,v}, dn) \cdot vol(K, dk).
\]
The arguments in \cite[p. 281-282]{EMS} (in that paper, the Haar measure on $K$ is normalized, so that its total volume is 1) imply that
\[vol( N_{T,v}, dn) \sim w_n \abs{\Delta_\chi}_v^{-\frac{1}{2}} T^{\frac{n(n-1)}{2}}.\]
Lemma \ref{lem:volomeofK}, provided below, proves that
\[vol(K, dk) = 2^n \pi^{\frac{n(n+1)}{4}} \prod_{i=1}^n \Gamma(\frac{i}{2})^{-1}.\]
Combining these results, we have
\[vol(d\mu_\infty,X(k_\infty,T))= \prod_{v\in \infty_k} vol(d\mu_v,X(k_v,T))\sim \left( \frac{w_n\pi^{\frac{n(n+1)}{4}} }{\prod_{i=1}^n \Gamma(\frac{i}{2})} T^{\frac{n(n-1)}{2}}  \right)^{[k:\Q]}\prod_{v\in\infty_k}|\Delta_\chi|_v^{-\frac{1}{2}}. \]
\end{proof}

\begin{lemma}\label{lem:volomeofK}
    Let $dk$ be the Haar measure on the orthogonal group $K = \mathrm{O}_n(k_v)$ satisfying (\ref{eq:decompose_haar}). We have
    \[vol(K, dk) = 2^n \pi^{\frac{n(n+1)}{4}} \prod_{i=1}^n \Gamma(\frac{i}{2})^{-1}.\]
\end{lemma}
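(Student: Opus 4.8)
The plan is to evaluate the Gaussian integral
\[
\int_{M_n(k_v)} e^{-\pi\,\mathrm{tr}(g^{t}g)}\ \bigwedge_{1\le i,j\le n} dx_{ij}
\]
in two different ways, where $M_n(k_v)$ denotes the space of $n\times n$ matrices over $k_v\cong\mathbb{R}$ (recall that in this setting $v\in\infty_k$ is real). Since $\mathrm{tr}(g^{t}g)=\sum_{i,j}x_{ij}^{2}$ and $\int_{\mathbb{R}}e^{-\pi x^{2}}\,dx=1$, the integral equals $1$. On the other hand, using (\ref{eq:decompose_haar}) to write $\bigwedge_{i,j}dx_{ij}=\abs{\det g}^{n}\,dg$, inserting the Iwasawa decomposition (\ref{eq:ankdecompo}), and using $dg=da\,dn\,dk$, the integral becomes $vol(K,dk)$ times an elementary integral over $A\times N$. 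Evaluating the latter and comparing with $1$ isolates $vol(K,dk)$.

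Concretely, the key steps are as follows. First, writing $g=ank$ with $a=\diag(a_{1},\dots,a_{n})\in A$, and using $k^{t}k=I$ together with the cyclic invariance of the trace, one gets $\mathrm{tr}(g^{t}g)=\mathrm{tr}\big((an)^{t}(an)\big)=\sum_{i,j}a_{i}^{2}n_{ij}^{2}=\sum_{i}a_{i}^{2}+\sum_{i<j}a_{i}^{2}n_{ij}^{2}$, where the $n_{ij}$ with $i<j$ are the standard coordinates on $N$ and we used that $n$ is unipotent upper triangular, so $n_{ii}=1$ and $n_{ij}=0$ for $i>j$. Second, $\abs{\det g}^{n}=\prod_{i}a_{i}^{n}$, and since $da=\bigwedge_{i}\frac{da_{i}}{a_{i}}$, the total weight against $\bigwedge_{i}da_{i}\,\bigwedge_{i<j}dn_{ij}\,dk$ is $\prod_{i}a_{i}^{n-1}$. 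Third, integrating out $N$, for each pair $i<j$ one has $\int_{\mathbb{R}}e^{-\pi a_{i}^{2}n_{ij}^{2}}\,dn_{ij}=a_{i}^{-1}$, and since there are $n-i$ such pairs for each fixed $i$, this contributes $\prod_{i}a_{i}^{-(n-i)}$ and leaves the weight $\prod_{i}a_{i}^{(n-1)-(n-i)}=\prod_{i}a_{i}^{\,i-1}$. Fourth, integrating out $A$ using $\int_{0}^{\infty}e^{-\pi a^{2}}a^{i-1}\,da=\frac12\pi^{-i/2}\Gamma(i/2)$ gives
\[
\prod_{i=1}^{n}\frac12\pi^{-i/2}\Gamma(i/2)=2^{-n}\,\pi^{-n(n+1)/4}\prod_{i=1}^{n}\Gamma(i/2).
\]
Equating $vol(K,dk)\cdot 2^{-n}\pi^{-n(n+1)/4}\prod_{i=1}^{n}\Gamma(i/2)=1$ then yields $vol(K,dk)=2^{n}\pi^{n(n+1)/4}\prod_{i=1}^{n}\Gamma(i/2)^{-1}$, as claimed.

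The only delicate point, which is bookkeeping rather than a genuine obstacle, is to verify that the three sources of powers of $a_i$ --- the factor $\abs{\det g}^{n}=\prod_i a_i^{n}$, one factor $a_i^{-1}$ per coordinate coming from $da=\bigwedge_i\frac{da_i}{a_i}$, and the $n-i$ off-diagonal coordinates in the $i$-th row of $N$ --- combine to leave exactly $a_i^{\,i-1}$; this is precisely what makes the product collapse to $\prod_i\Gamma(i/2)$ with the correct power of $\pi$. It should also be noted that the normalization of $dk$ used throughout is the one determined by $dg=da\,dn\,dk$ with $da$ and $dn$ fixed as in the proof of Lemma~\ref{lem:infinite_place_calculation}, i.e. exactly (\ref{eq:decompose_haar}), so that the decomposition of the Haar measure invoked from \cite[Section~8]{Knapp} requires no further compatibility check.
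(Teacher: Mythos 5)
Your proof is correct, and it follows a genuinely different route from the paper. The paper's argument passes through the LDU decomposition $\mathrm{GL}_n(k_v)=\bar{N}T_\lambda N$, computes the Jacobian $e^{2\rho\log t}$ by an explicit change of variables, and then manipulates a Haar integral to reduce the claim to the identity $vol(K,dk)=2^n\int_{\bar{N}}e^{-2\rho\log a(\bar{n})}\,d\bar{n}$, a Gindikin--Karpelevich-type integral whose value is finally quoted from the literature; it also invokes Knapp's relation $da\,dn\,dk=e^{2\rho\log t}\,dk\,da\,dn$. You instead evaluate the matrix Gaussian integral $\int e^{-\pi\,\mathrm{tr}(g^{t}g)}\bigwedge dx_{ij}$ in two ways, using only the defining normalization (\ref{eq:decompose_haar}) itself (note that the complement of $\mathrm{GL}_n(k_v)$ in $M_n(k_v)$ is Lebesgue-null, so the two domains of integration agree); the $K$-integral factors out by orthogonality and cyclicity of the trace, the $N$-integrals are one-dimensional Gaussians, and the $A$-integrals are Gamma integrals. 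Your bookkeeping of the powers of $a_i$ (namely $a_i^{n}\cdot a_i^{-1}\cdot a_i^{-(n-i)}=a_i^{\,i-1}$) and the evaluation $\int_0^\infty e^{-\pi a^2}a^{i-1}\,da=\tfrac12\pi^{-i/2}\Gamma(i/2)$ are both right, and the factor $2^n$ emerges from the half-line integrals exactly where the paper gets it from the sign set $\Sigma=\{\pm1\}^n$. What your approach buys is self-containedness and elementarity: no LDU Jacobian, no appeal to the external evaluation of the $\bar{N}$-integral, and no auxiliary compatibility between two orderings of the Iwasawa factors beyond the normalization (\ref{eq:decompose_haar}) that the lemma itself fixes; what the paper's route buys is that it exhibits $vol(K,dk)$ in the standard $c$-function form, which fits its surrounding measure comparisons.
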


\begin{proof}
    We consider the LDU-decomposition of $\mathrm{GL}_n(k_v)$ into $\bar{N}T_\lambda N$ where $\bar{N}$ is the subgroup of unipotent lower triangular matrices in $\mathrm{GL}_n(k_v)$ and we use $T_\lambda$ to denote $T_\lambda(k_v)$ by abuse of notation.
    Let $d\bar{n} = \bigwedge_{1\leq j<i \leq n} dx_{ij}$ be a Haar measure on $\bar{N}$.
    By the change of variables along the LDU-decomposition, we obtain that
    \begin{align*}
      \frac{1}{\abs{\det(x_{ij})}^n} \bigwedge_{1\leq i,j \leq n} dx_{ij} &= \frac{\abs{t_2^{2n-2} t_2^{2n-4} \cdots t_{n-1}^{2}}}{\abs{t_1 t_2 \cdots t_n}^n} \bigwedge_{1\leq j<i\leq n} dx_{ij} \wedge \bigwedge_{1\leq i \leq n} dt_i \wedge \bigwedge_{1\leq i<j\leq n} dx_{ij} \\ 
       &= \left| t_1^{n-1} t_2^{n-3} \cdots t_n^{-n+1} \right| \bigwedge_{1\leq j<i\leq n} dx_{ij} \wedge \bigwedge_{1\leq i \leq n} \frac{dt_i}{|t_i|} \wedge \bigwedge_{1\leq i<j\leq n} dx_{ij} \\
       &= e^{ 2\rho \log t} d\bar{n}dtdn,
     \end{align*}where $\rho$ denotes the half sum of positive roots of $T_\lambda$ in $\mathrm{GL}_n$, and so $e^{ 2\rho \log t} = \left| \prod_{1\leq i<j \leq n} \frac{t_i}{t_j} \right| = \abs{t_1^{n-1} t_2^{n-3} \cdots t_n^{-n+1}}$ for $t=\diag (t_1,\cdots,t_n)$.
    For $f \in \mathcal{C}_c^\infty(G(k_v))$ such that $f(kg)=f(g)$ for $k\in K$, we have
    \begin{align*}
        \int_{\mathrm{GL}_n(k_v)} f(x) dg &=\int_{\overline{N}\times T_\lambda \times N}f(\bar{n}tn)e^{2\rho \log t}d\bar{n}dtdn\\
        &=\int_{\overline{N}\times T_\lambda \times N}f(a(\bar{n})n(\bar{n})tn)e^{2\rho \log t}d\bar{n}dtdn\\
        &=\int_{\overline{N}\times T_\lambda \times N}f(a(\bar{n})tn)e^{2\rho \log a(\bar{n})t}e^{-2\rho\log a(\bar{n})} d\bar{n}dtdn\\
        &=\int_{\bar{N}}e^{-2\rho\log a(\bar{n})}d\bar{n}\int_{T_\lambda \times N}f(tn)e^{2\rho \log t} dtdn\\
        &=vol(dk, K)^{-1}\cdot2^n\int_{\bar{N}}e^{-2\rho\log a(\bar{n})}d\bar{n}\int_{K\times A\times N} f(kan)e^{2\rho \log t} dkdadn,
    \end{align*}
    where $\bar{n}=k(\bar{n})a(\bar{n})n(\bar{n})$ under the Iwasawa decomposition $\mathrm{GL}_n(k_v)=KAN$ described in (\ref{eq:ankdecompo}).
    Using the identities $dadndk = e^{ 2\rho \log t} dkda dn$ (see \cite[Proposition 8.43]{Knapp}) and (\ref{eq:decompose_haar}), we have
    \[
    vol(dk,K)=2^n\int_{\bar{N}}e^{-2\rho\log a(\bar{n})}d\bar{n}=2^n \pi^{\frac{n(n+1)}{4}} \prod_{i=1}^n \Gamma(\frac{i}{2})^{-1},
    \]
    where the last equality follows from \cite[Theorem 1, 14.10]{Vosk}.
    \end{proof}
\section{Automorphic interpretation via $L^2$-expansion}\label{app:spectral_expansion}
When $ N(X;f_{\mathbf{X},T})$ admits an automorphic (spectral) interpretation as in \cite{DRS}, its main term as $T\rightarrow \infty$ usually corresponds to the trivial representation as discussed in \cite{Get18}. 
In this appendix, we consider this expectation in our context.

We maintain the assumptions from Section \ref{sec:main_result}. Through  Remark \ref{rmk:conjugacy_and_pointsofX}, we identify $X(\mathbb{A}_k)$ with the stable conjugacy class of $\gamma$ in $G(\mathbb{A}_k)$.
For a function $f_{\mathbf{X},T}$ on $X(\mathbb{A}_k)$ defined in (\ref{def:alg_ftn}),
we define a function 
\[
F_{\mathbf{X},T}:[G]\rightarrow \mathbb{C}, g\rightarrow \sum_{x\in X(k)}f_{\mathbf{X},T}(x\cdot g).
\] 
Then one has $ N(X;f_{\mathbf{X},T})=F_{\mathbf{X},T}(1)$.
For a compact subset $\Omega \subset G(\mathbb{A}_k)$, nonzero summands in $F_{\mathbf{X},T}(g)$ for $g\in \Omega$ is finite since they correspond to elements $x \in X(k)\cap supp(f_{\mathbf{X},T})\cdot \Omega$.
The integral of $F_{\mathbf{X},T}$ over $[G]$ can be written as the finite sum of the products of Tamagawa number of $G_{\gamma'}$ and the orbital integral at $\gamma'$ for $f_{\mathbf{X},T}$, where $\gamma'\in G(k)/\sim$  such that $\gamma'\sim_{st}\gamma$.
Hence $F_{\mathbf{X},T}$ is an $L^1$-function. Since it is bounded, $F_{\mathbf{X},T}$ is a $L^2$-function.

Applying the $L^2$-expansion to $F_{\mathbf{X},T}$, we have a spectral interpretation of $ N(X;f_{\mathbf{X},T})$.
The contribution to $F_{\mathbf{X},T}(g)$ of a discrete automorphic representation $\pi\subset L^2([G])$ is formulated as follows
\[
\sum_{\varphi_i\in \mathcal{B}(\pi)}\int_{[G]}F_{\mathbf{X},T}(g')\varphi_i(g') m^{G}(g') \bar{\varphi}_i(g),
\]where $\mathcal{B}(\pi)$ is an orthonormal basis of the $\pi$-isotypic subspace $L^2(\pi)$ of $L^2([G])$.
Let $K:=K_\infty K^\infty < G(\mathbb{A}_k)$ where $K_\infty <G(k_\infty)$ is a maximal compact subgroup and $K^\infty <G(\mathbb{A}_k^\infty)$ is a compact open subgroup. 
Since $K$-finite vectors are dense in $L^2(\pi)$ by \cite[Proposition 4.4.3]{GH19}, we may choose $\mathcal{B}(\pi)$ to consist of $K$-finite vectors.
To ease the notation, we set $f:=f_{\mathbf{X},T}$.
Then we have
\begin{align*}
    \sum_{\varphi_i\in \mathcal{B}(\pi)}\int_{[G]}F_{\mathbf{X},T}(g')\varphi_i(g') m^{G}(g')\bar{\varphi}_i(g)=&\sum_{\varphi_i\in \mathcal{B}(\pi)}\int_{[G]}\sum_{x\in X(k)}f(x\cdot g')\varphi_i(g') m^G(g')\bar{\varphi}_i(g)\\
    =&\sum_{\varphi_i\in \mathcal{B}(\pi)}\int_{G(k)\backslash G(\mathbb{A}_k)}\sum_{\substack{\gamma'\in G(k)/\sim \\ \gamma'\sim_{st} \gamma}}\sum_{h\in G_{\gamma'}(k)\backslash G(k)}f(\gamma'\cdot hg')\varphi_i(g') m^G(g')\bar{\varphi}_i(g)\\
      =&\sum_{\varphi_i\in \mathcal{B}(\pi)}\sum_{\substack{\gamma'\in G(k)/\sim \\ \gamma'\sim_{st} \gamma}}
     \int_{G_{\gamma'}(k)\backslash G(\mathbb{A}_k)}f(g'^{-1}\gamma'g')\varphi_i(g') m^G(g')\bar{\varphi}_i(g)\\
    =&\sum_{\varphi_i\in \mathcal{B}(\pi)}\sum_{\substack{\gamma'\in G(k)/\sim \\ \gamma'\sim_{st} \gamma}}
    \int_{G_{\gamma'}(\mathbb{A}_k)\backslash G(\mathbb{A}_k)}f(g'^{-1}\gamma'g')\varphi_i^{G_{\gamma'}}(g') m^X(g') \bar{\varphi}_i(g),
\end{align*}
where $\varphi_i^{G_{\gamma'}}(g'):=\int_{[G_{\gamma'}]} \varphi_i(hg')m^{G_{\gamma'}}(h)$.
Here, $\varphi_i^{G_{\gamma'}}(g')$ converges absolutely since $\varphi_i$ is of moderate growth by \cite[Theorem 6.6.4]{GH19} and $[G_{\gamma'}]$ is compact.
In particular, for the trivial representation $\pi_{tri}$, the $\pi_{tri}$-isotypic subspace $L^2(\pi_{tri})$ of $L^2([G])$ is the set of constant functions on $[G]$ and $\mathcal{B}(\pi_{tri})=\{1\}$.
Therefore the contribution of $\pi_{tri}$ is
\[
\sum_{\substack{\gamma'\in G(k)/\sim \\ \gamma'\sim_{st} \gamma}}\tau(G_{\gamma'})\mathcal{O}_{\gamma'}(\tilde{f}_{\mathbf{X},T}),
\]
where $\tilde{f}_{\mathbf{X},T}$ is a function on $G(\mathbb{A}_k)$ satisfying (\ref{eq:test_function_on_G}).
As in Remark \ref{rmk:main}, applying the deviation for the pre-stabilization in Section \ref{sec:pre_stabilization} to the right-hand side of (\ref{eq:mainthm}), one can deduce that this contribution is asymptotically equivalent to $ N(X;f_{\mathbf{X},T})$.
This shows that our main theorem supports the expectation, introduced at the beginning of this section, that the main term in the asymptotic behavior of  $ N(X;f_{\mathbf{X},T})$ arises from the trivial representation.

\bibliographystyle{alpha}
\bibliography{References}

\end{document}